\let\oldmarginpar\marginpar
\renewcommand\marginpar[1]{\-\oldmarginpar[\raggedleft\footnotesize #1]%
{\raggedright\footnotesize #1}}
\newtheorem{theorem}{Theorem}
\newtheorem{corollary}[theorem]{Corollary}
\newtheorem{obs}[theorem]{Observation}
\newtheorem{lemma}[theorem]{Lemma}
\newtheorem{proposition}[theorem]{Proposition}
\newtheorem{conjecture}[theorem]{Conjecture}
\newtheorem{question}{Question}
\numberwithin{equation}{section}
\newcommand{\pr}{\mathfrak{pr}}
\newcommand{\john}[1]{{\color{red} \sf John: [#1]}}
\newcommand{\vanish}[1]{}
\begin{document}

\title{Minimum Coprime Labelings for Operations on Graphs}

\author{
John Asplund\\
{\small Department of Technology and Mathematics,
Dalton State College,} \\
{\small Dalton, GA 30720, USA} \\
{\small jasplund@daltonstate.edu}\\
\\
N. Bradley Fox\\
{\small Department of Mathematics and Statistics, Austin Peay State University} \\
{\small Clarksville, TN 37044} \\
{\small foxb@apsu.edu}
 }

\date{}
\maketitle

\begin{abstract}
A prime labeling of a graph of order $n$ is a labeling of the vertices with the integers $1$ to~$n$ in which adjacent vertices have relatively prime labels.  A coprime labeling maintains the same criterion on adjacent vertices using any set of distinct positive integers.  In this paper, we consider several families of graphs or products of graphs that have been shown to not have prime labelings and answer the natural question of how to label the vertices while minimizing the largest value in its set of labels.
\end{abstract}

\section{Introduction}
Consider $G$ to be a simple graph with vertex set $V$ in which $|V|=n$ and edge set $E$.  
Throughout this paper, we let $p_i$ be the $i^{\rm th}$ prime number.
A \textit{prime labeling} of $G$ is a labeling of $V$ using the distinct integers $\{1,\ldots,n\}$ such that the labels of any pair of adjacent vertices are relatively prime; if such a labeling exists for a graph, we say that graph is prime.  A graph $G$ is called a \textit{prime graph} if such a labeling exists on $G$.  More generally, a \textit{coprime labeling} of $G$ uses distinct labels from the set $\{1,\ldots, m\}$ for some integer $m\geq n$ such that adjacent labels are relatively prime.  
The minimum value $m$ for which $G$ has a coprime labeling is defined as the \textit{minimum coprime number}, denoted as $\pr(G)$, and a coprime labeling of $G$ with largest label being $\pr(G)$ is called a \textit{minimum coprime labeling} of $G$.  A prime graph therefore has $\pr(G)=n$ as its minimum coprime number.

The concept of a prime labeling of a graph was first developed by Roger Entriger and introduced in~\cite{TDH} by Tout, Dabboucy, and Howalla.  
While most research has revolved around finding prime labelings for various classes of graphs, our focus is on the problem of determining the minimum coprime number for graphs that have been shown to not be prime, a question that was previously studied for complete bipartite graphs $K_{n,n}$ by Berliner et al.~\cite{Berliner}.  A dynamic survey of results on the 35 year history of prime labelings is given by Gallian in~\cite{Gallian}.

It was conjectured by Entriger that all trees have prime labelings, and many classes of trees such as paths, stars, caterpillars, complete binary trees, and spiders have been shown by Fu and Huang in~\cite{FH} to be prime.  Salmasian \cite{S2} showed that for every tree $T$ with $n$ vertices ($n\geq 50$), $\pr(T)\leq 4n$.  Pikhurke \cite{P} improved this by showing that for any integer $c>0$, there is an $N$ such that for any tree $T$ of order $n>N$, $\pr(T)<(1+c)n$. Additionally, many graphs that are not trees have been proven to be prime, including cycles, helms, fans, flowers, and books for all sizes; see~\cite{DLM}, \cite{SDE}, and \cite{SY}. 
There is a large collection of graphs whose primality depends on the size of its vertex set.  
The complete graph $K_n$, for example, is clearly prime only if $n\leq 3$.  Additionally, the wheel graph $W_n$, which consists of a cycle of length $n$ where each vertex on the cycle is adjacent to a central vertex, is prime if and only if $n$ is even.  Section~\ref{wheels} examines the minimum coprime number for complete graphs with at least $4$ vertices and wheel graphs in which $n$ is odd.

While paths and cycles are known to be prime, combinations of these through common graph operations often result in a graph that is not prime.  Recall the \textit{disjoint union} of graphs $G$ and $H$ is the graph $G\cup H$ with vertex set $V(G)\cup V(H)$ and edge set $E(G)\cup E(H)$.  Deretsky et al.~\cite{DLM} proved that $C_{2k}\cup C_n$ is prime for all integers $k$ and $n$.  However, if both of the cycles are of odd length, their disjoint union is not prime.  Determining the minimum coprime number in this case of the union of odd cycles will be our first focus in Section~\ref{unions}.  
We also consider the union of the complete graph with either a path or the star graph (denoted as $S_n$ where $n$ is the number of degree $1$ vertices, also called pendant vertices).  Youssef and El Sakhawi~\cite{EY} studied these two union graphs, concluding that $K_m\cup P_n$ is prime if and only if $1\leq m\leq 3$ or $m=4$ with $n\geq 1$ being odd.  They also determined $K_m\cup S_n$ is prime if and only if the number of primes less than or equal to $m+n+1$ is at least $m$.  
A graph operation that can be applied to complete graphs is the \textit{corona} operation, which is defined as follows. 
The corona of a graph $G$ with a graph $H$, in which $|V(G)|=n$, is denoted by $G\odot H$ and is obtained by combining one copy of $G$ with $n$ copies of $H$ by attaching the $i^{\rm th}$ vertex in $G$ to every vertex within the $i^{\rm th}$ copy of $H$. 
In particular, we examine the corona of a complete graph on $n$ vertices with an empty graph on $1$ or $2$ vertices.
We examine the non-prime cases for these unions and coronas to determine their minimum coprime number in
Section~\ref{unions}.

Given a graph $G$, the $k^{\rm th}$ \textit{power} of $G$, denoted $G^k$, is defined as the graph with the same vertex set as $G$ but with an edge between each $u,v\in V(G)$ for which $d(u,v)\leq k$ in $G$.  Here the value $d(u,v)$ is the distance between $u$ and $v$, or the length of the shortest path between the two vertices.  The square of paths and cycles, denoted as $P_n^2$ and $C_n^2$, were shown not to be prime by Seoud and Youssef in~\cite{SY}.  This implies that higher powers of these are also not prime since $G^k$ is a subgraph of $G^\ell$ for integers $k\leq \ell$.  Section~\ref{powers} explores the minimum coprime numbers for the square and cube of both the path and cycle graphs.

The \textit{join} of two disjoint graphs $G$ and $H$, denoted as $G+H$, consists of a vertex set $V(G)\cup V(H)$ with an edge added to connect each vertex in $G$ to those in $H$, resulting in an edge set $E(G)\cup E(H)\cup \{uv: u\in V(G), v\in V(H)\}$.  Seoud, Diab, and Elsahawi~\cite{SDE} studied the primality of the join of paths with the empty graph $\overline{K}_m$ on $m$ isolated vertices.  They proved that $P_n+\overline{K}_2$ is prime if and only $n=2$ or $n$ is odd, and that the join graph $P_n+\overline{K}_m$ is not prime for all $m\geq 3$.  Since these two classes of graphs are subgraphs of $P_n+P_2$ and $P_n+P_m$ respectively, the join of paths follow similar criteria for not being prime.  Analogous reasoning applies for the join of two cycles or of a path and a cycle.   We will find the minimum coprime number for certain cases of these join graphs depending on the relationship between $m$ and $n$ within Section~\ref{joins}.

Our final section concludes with open problems for further research.  While there are still many unanswered questions about the primality of graphs such as conjectures on trees and unicyclic graphs being prime, we include open questions regarding the minimum coprime number of particular classes of graphs.

\section{Complete Graphs and Wheels}\label{wheels}

Consider the complete graph $K_n$ on $n$ vertices.  It is easy to see that $K_n$ is prime if and only if $n\leq 3$.  We first examine the minimum coprime number for the complete graph with $4$ or more vertices.

\begin{proposition}\label{complete}
Let $n\geq 4$.  The minimum coprime number of $K_n$ is $\pr(K_n)=p_{n-1}$.
\end{proposition}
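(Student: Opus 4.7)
The plan is to prove matching upper and lower bounds by analyzing what it means for a set of labels to be a valid labeling of $K_n$: since every pair of vertices in $K_n$ is adjacent, a coprime labeling of $K_n$ is precisely a choice of $n$ distinct positive integers that are \emph{pairwise} coprime. So the task reduces to finding the smallest $m$ such that $\{1,\dots,m\}$ contains $n$ pairwise coprime elements.

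For the upper bound $\pr(K_n)\le p_{n-1}$, I would exhibit the explicit label set
\[
\{1,\, p_1,\, p_2,\, \dots,\, p_{n-1}\} = \{1,\,2,\,3,\,5,\,\dots,\,p_{n-1}\}.
\]
These $n$ integers are pairwise coprime (the number $1$ is coprime to everything, and distinct primes are coprime), and the largest one is $p_{n-1}$, giving a valid labeling of $K_n$.

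For the lower bound $\pr(K_n)\ge p_{n-1}$, I would argue by prime factorizations. Let $a_1<a_2<\cdots<a_n$ be any $n$ pairwise coprime positive integers. Any two elements that are greater than $1$ must have disjoint sets of prime divisors, since sharing even one prime would contradict coprimality. At most one of the $a_i$ can equal $1$, so at least $n-1$ of them are $>1$, and each such $a_i$ contributes at least one prime divisor, with all these prime divisors distinct across different $a_i$. Therefore at least $n-1$ distinct primes appear in the factorizations of the $a_i$'s, so at least one prime $q\ge p_{n-1}$ divides some $a_j$. Hence $a_n\ge a_j\ge q\ge p_{n-1}$, which gives the desired lower bound.

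There is no real obstacle here; the only subtlety is handling the label $1$ carefully in the counting argument (the case $a_1>1$ actually forces $a_n\ge p_n>p_{n-1}$, so only the $a_1=1$ case is tight, which matches the construction). Combining the two bounds yields $\pr(K_n)=p_{n-1}$ for $n\ge 4$ (the hypothesis $n\ge 4$ is just what rules out the prime cases $n\le 3$ already noted before the statement).
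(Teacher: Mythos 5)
Your proof is correct and follows essentially the same approach as the paper: the upper bound comes from the explicit label set $\{1, p_1, \dots, p_{n-1}\}$, and the lower bound from the fact that pairwise coprimality forces $n-1$ distinct primes to appear among the labels. In fact your lower-bound argument is more careful than the paper's, which asserts that only prime numbers and $1$ can be used as labels --- false as stated, since pairwise coprime prime powers such as $4$, $9$, $25$ are also admissible --- whereas your counting of prime divisors (each label greater than $1$ contributes at least one prime, all distinct, so some label is divisible by a prime at least $p_{n-1}$) is the rigorous way to close that gap.
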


\begin{proof}
Since each vertex is adjacent to every other vertex, only prime numbers and $1$ can be used as vertex labels to keep each pair of vertices relatively prime.  Thus, we can label the graph with a minimum coprime labeling by using the first $n-1$ primes along with $1$. 
\end{proof}

Next we consider the wheel graph $W_n$.  We name the vertices $v_1,\ldots,v_n$ as shown in Figure~\ref{wheel} with $v_1$ representing the center vertex and the remaining $v_i$ listed in clockwise order with $v_2$ being adjacent to $v_{n+1}$. Lee, Wui, and Yeh~\cite{LWY} demonstrated that $W_n$ is prime if and only if $n$ is even.  The following result determines the minimum coprime number for the odd case.

\begin{figure}
\begin{center}
\includegraphics[scale=1]{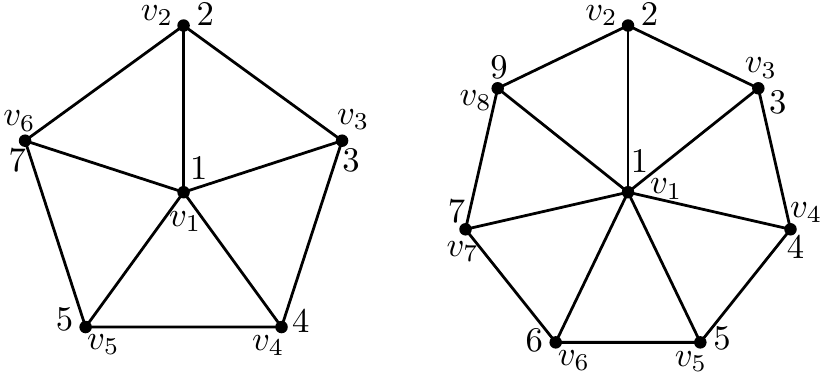}
\caption{The graph $W_n$}\label{wheel}
\end{center}
\end{figure}

\begin{proposition}
Let $n$ be odd.  Then the minimum coprime number of $W_n$ is $\pr(W_n)=n+2$.
\end{proposition}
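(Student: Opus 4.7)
The plan is to establish $\pr(W_n)\ge n+2$ and then exhibit a labeling achieving the bound. The lower bound is immediate from the Lee--Wui--Yeh result cited just above: $W_n$ is not prime when $n$ is odd, and since $|V(W_n)|=n+1$ this forces $\pr(W_n)>n+1$. If a self-contained argument is preferred, one observes that the center vertex $v_1$ is adjacent to every other vertex, so any label $c\in\{1,\dots,n+1\}$ assigned to $v_1$ must be coprime to every other label in that set; this forces $c=1$ or $c$ to be an odd prime exceeding $(n+1)/2$. In either case the outer cycle must carry all $(n+1)/2$ even elements of $\{2,\dots,n+1\}$, but the largest independent set in the odd cycle $C_n$ has size only $(n-1)/2$, so two evens must land on adjacent cycle vertices, contradicting coprimality.

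For the upper bound I would give an explicit labeling that omits $n+1$ from the label set. Set $v_1=1$, $v_i=i$ for $2\le i\le n$, and $v_{n+1}=n+2$; the largest label used is $n+2$. Because $v_1=1$, the center is trivially coprime to every cycle label, so only the $n$ cycle edges need to be checked.

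These edges fall into three groups. The $n-2$ edges $(v_i,v_{i+1})=(i,i+1)$ for $2\le i\le n-1$ are coprime as consecutive integers. The edge $(v_n,v_{n+1})=(n,n+2)$ has gcd dividing $2$, and since $n$ is odd this gcd equals $1$. Finally, the closing edge $(v_{n+1},v_2)=(n+2,2)$ is coprime since $n+2$ is odd. Hence the labeling is valid and $\pr(W_n)\le n+2$.

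The only nontrivial step in the whole argument is choosing which label to omit in the construction: dropping an odd label would leave $(n+1)/2$ evens on the cycle and reproduce the lower-bound obstruction, so one must omit an even label, and the choice $n+1$ is exactly what makes both of the non-obvious edges $(n,n+2)$ and $(n+2,2)$ coprime simultaneously, after which the remaining cycle edges trivially work out as consecutive integers.
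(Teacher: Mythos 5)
Your proof is correct and follows essentially the same route as the paper: the identical labeling ($v_1=1$, $v_i=i$ for $2\le i\le n$, $v_{n+1}=n+2$) with the same three edge checks, and the lower bound obtained from the non-primality of $W_n$ for odd $n$. Your optional self-contained lower-bound argument via the independence number of the odd cycle is a valid substitute for the citation to Lee--Wui--Yeh and is the same parity obstruction the paper invokes elsewhere for unions of odd cycles.
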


\begin{proof}
We label the vertices $v_i$ as $i$ for each $i\in\{1,\ldots, n\}$ and $v_{n+1}$ as $n+2$.  This labeling is coprime since each adjacent pair of labels falls into one of four cases: the center label $1$ is in the pair, the labels are consecutive integers, the labels are $n$ and $n+2$ which are consecutive odd numbers, or the labels are $2$ and $n+2$, where again $n$ is assumed to be odd.  The labeling is a minimum coprime labeling since it was proven in \cite{LWY} to be not prime, hence a labeling with maximum label being the the number of vertices, $n+1$, is impossible to achieve.
\end{proof}

\section{Disjoint Union of Graphs and Corona}\label{unions}

The following observation is straightforward from the definitions of prime and coprime labelings and will be useful for an upcoming proof in this section. 

\begin{obs}\label{primeFromPrime}
If $G$ is not prime, then a spanning supergraph of $G$ is not prime. If $G$ is a prime, then any spanning subgraph of $G$ is also prime. 
\end{obs}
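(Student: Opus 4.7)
The observation reduces to the fact that a prime labeling is determined by the vertex set (labels must be $\{1,\ldots,n\}$) and must only satisfy a coprimality condition on pairs of adjacent vertices. Adding edges can only add constraints and removing edges can only remove constraints, so the two statements should follow by the same mechanism, with one being essentially the contrapositive of the other.

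My plan is to prove the second statement directly, then obtain the first by contraposition. Suppose $G$ has a prime labeling $f:V(G)\to\{1,\ldots,n\}$, and let $H$ be a spanning subgraph of $G$, meaning $V(H)=V(G)$ and $E(H)\subseteq E(G)$. Since $V(H)=V(G)$, the bijection $f$ is also a bijection $V(H)\to\{1,\ldots,n\}$. For any edge $uv\in E(H)$, we have $uv\in E(G)$, so $\gcd(f(u),f(v))=1$ because $f$ is a prime labeling of $G$. Hence $f$ is a prime labeling of $H$.

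For the first statement, suppose $G$ is not prime and let $H$ be a spanning supergraph, so $V(H)=V(G)$ and $E(G)\subseteq E(H)$. Then $G$ is a spanning subgraph of $H$, so if $H$ were prime, the previous paragraph would force $G$ to be prime, a contradiction. Therefore $H$ is not prime.

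There is no real obstacle here — this is a direct unpacking of the definitions, and the only thing to be careful about is making sure the relabeling bijection is legitimately shared between $G$ and its subgraph/supergraph, which is immediate from $V(G)=V(H)$ in the spanning setting.
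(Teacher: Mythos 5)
Your proof is correct and is exactly the definition-unpacking the paper has in mind: the paper states this as an observation without proof, calling it straightforward from the definitions, and your argument (prove the spanning-subgraph direction directly, obtain the supergraph direction by contraposition) is the canonical way to make that precise. Nothing further is needed.
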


The disjoint union of two graphs has been shown to be prime for a variety of graphs under certain conditions. In~\cite{DLM}, the disjoint union of cycles was shown to be prime when at least one of the cycles has an even number of vertices; that is, $C_{2k}\cup C_n$ is prime for all positive $k,n\in \mathbb{Z}$.  
Examining the case in which both cycles are odd, we see that we only need to increase the largest label by one to achieve a minimum coprime labeling.

\begin{theorem}
For all $\ell\geq k\geq 1$, the minimum coprime number of the disjoint union of two odd-length cycles is $$\pr(C_{2k+1}\cup C_{2\ell+1})=2(k+\ell)+3.$$
\end{theorem}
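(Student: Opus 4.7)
The overall plan is to bound $\pr(C_{2k+1}\cup C_{2\ell+1})$ from both sides. The lower bound uses a parity / independent-set argument, and the upper bound uses an explicit labeling that omits exactly the label $2k+2\ell+2$.

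For the lower bound, I would suppose that the graph admits a coprime labeling using only the labels $\{1,\ldots,2(k+\ell)+2\}$. That label set contains $k+\ell+1$ even integers, and any two even labels share the prime $2$ and hence cannot appear on adjacent vertices, so the even-labeled vertices must form an independent set. But the maximum independent set of an odd cycle $C_{2n+1}$ has size $n$, so the maximum independent set of $C_{2k+1}\cup C_{2\ell+1}$ has size $k+\ell<k+\ell+1$, a contradiction. This gives $\pr(C_{2k+1}\cup C_{2\ell+1})\ge 2(k+\ell)+3$.

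For the upper bound I would construct an explicit labeling attaining $2(k+\ell)+3$. The first thing to try is to place $1,2,\ldots,2k+1$ on $C_{2k+1}$ in order and then arrange $\{2k+3,\ldots,2k+2\ell+3\}$ in natural cyclic order around $C_{2\ell+1}$, but this breaks down whenever the wrap pair $(2k+3,2k+2\ell+3)$ shares a nontrivial odd factor (for instance $k=\ell=3$ yields $\gcd(9,15)=3$). The fix I would use is to transplant the label $2$ from the first cycle into the second, where it is coprime to every odd label and so acts as a universal buffer. Explicitly, label the vertices of $C_{2k+1}$ in cyclic order by
\[
1,\ 3,\ 4,\ 5,\ \ldots,\ 2k+2,
\]
and label the vertices of $C_{2\ell+1}$ in cyclic order by
\[
2,\ 2k+3,\ 2k+4,\ \ldots,\ 2k+2\ell+1,\ 2k+2\ell+3,
\]
so that the unique omitted label is $2k+2\ell+2$.

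Verification is then a brief case check. In $C_{2k+1}$ every adjacent pair is either two consecutive integers or one of $(1,3)$ and $(2k+2,1)$ involving the label $1$, hence coprime. In $C_{2\ell+1}$ every adjacent pair is either a consecutive-integer pair from $\{2k+3,\ldots,2k+2\ell+1\}$, one of the pairs $(2,2k+3)$ and $(2k+2\ell+3,2)$ pairing the label $2$ with an odd number, or the single pair $(2k+2\ell+1,2k+2\ell+3)$ of two odd integers differing by $2$; each is coprime. The step I expect to be the main obstacle is discovering this construction in the first place: several natural schemes that split the labels into two consecutive intervals fail for particular $(k,\ell)$ because of unavoidable divisibility at the wrap, and the essential insight is that rerouting the label $2$ into the second cycle so that it sits between the two largest odd labels simultaneously neutralizes both potentially bad wrap-around gcds.
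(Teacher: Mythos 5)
Your proof is correct, and your upper-bound construction is genuinely different from (and cleaner than) the one in the paper. The lower bound is essentially identical: both arguments observe that a labeling confined to $\{1,\ldots,2(k+\ell)+2\}$ would force $k+\ell+1$ even labels onto an independent set, while the independence number of the union of the two odd cycles is only $k+\ell$. For the upper bound, however, the paper keeps the label $2$ on the first cycle and labels that cycle $1,3,4,\ldots$ together with $2k+2$ placed next to a small label; this forces a case analysis on the smallest prime $p$ not dividing $2k+2$ (the cases $p=3$ and $p>3$ are handled by two different figures, and the second cycle must absorb the labels $p-1,p,p+1$ in a carefully chosen spot). Your idea of exporting the label $2$ to the second cycle and wedging it between the two largest odd labels $2k+2\ell+1$ and $2k+2\ell+3$ eliminates both problematic wrap-around gcds at once: every adjacency is then a consecutive pair, a pair containing $1$ or $2$ against an odd number, or the pair of consecutive odd numbers $(2(k+\ell)+1,2(k+\ell)+3)$. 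This avoids the prime-divisibility bookkeeping entirely, works uniformly for all $k,\ell\geq 1$ without the hypothesis $\ell\geq k$, and in my view is the preferable argument; the only thing the paper's version "buys" is that it keeps the two cycles labeled by two (nearly) contiguous blocks of integers, which your labeling sacrifices by moving $2$.
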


\begin{proof}
Let $p$ be the smallest prime for which $p$ does not divide $2k+2$, which is at least $3$ since $2k+2$ is even.  We break down the labeling into two cases depending on whether $p=3$ or $p>3$.  First assume that $p=3$, so $3\nmid (2k+2)$.  We label the vertices of the graph as shown in Figure~\ref{case1}.  Most of the edges have endpoints with labels of the form $\{m,m+1\}$, which are relatively prime as consecutive integers.  The edges that include the label $1$ as an endpoint or that connect $2$ to an odd label also clearly have relatively prime endpoints.  Since $2(k+\ell)+1$ and $2(k+\ell)+3$ are consecutive odd numbers, they are also relatively prime.  The final edge to consider is between the labels $3$ and $2k+2$, which are relatively prime based on the assumption that $3$ does not divide $2k+2$. Thus, the labeling provided in Figure~\ref{case1} is a coprime labeling of $C_{2k+1}\cup C_{2\ell+1}$ for this case of $p=3$.

\begin{figure}[htb]
\begin{center}
\begin{tikzpicture}[line cap=round,line join=round,>=triangle 45,x=1.0cm,y=1.0cm]
\clip(-2.8806360895220697,1.578329584249893) rectangle (9.461172195389738,8.095095610796193);
\draw [line width=1.2pt] (1.,3.)-- (-1.,3.);
\draw [line width=1.2pt] (-1.,3.)-- (-2.2469796037174667,4.56366296493606);
\draw [line width=1.2pt] (-2.2469796037174667,4.56366296493606)-- (-1.8019377358048378,6.513518789299707);
\draw [line width=1.2pt] (-1.8019377358048378,6.513518789299707)-- (0.,7.381286267534822);
\draw [line width=1.2pt] (0.,7.381286267534822)-- (1.8019377358048383,6.513518789299706);
\draw [line width=1.2pt,dash pattern=on 3pt off 3pt] (1.8019377358048383,6.513518789299706)-- (2.2469796037174667,4.563662964936059);
\draw [line width=1.2pt] (2.2469796037174667,4.563662964936059)-- (1.,3.);
\draw [line width=1.2pt] (6.5,3.)-- (5.,3.);
\draw [line width=1.2pt] (5.,3.)-- (3.850933335321533,3.9641814145298095);
\draw [line width=1.2pt] (3.850933335321533,3.9641814145298095)-- (3.5904610688211385,5.441393044048121);
\draw [line width=1.2pt] (3.5904610688211385,5.441393044048121)-- (4.3404610688211385,6.740431149724778);
\draw [line width=1.2pt] (4.3404610688211385,6.740431149724778)-- (5.75,7.25346136471328);
\draw [line width=1.2pt] (5.75,7.25346136471328)-- (7.159538931178862,6.740431149724777);
\draw [line width=1.2pt] (7.159538931178862,6.740431149724777)-- (7.9095389311788615,5.4413930440481195);
\draw [line width=1.2pt,dash pattern=on 3pt off 3pt] (7.9095389311788615,5.4413930440481195)-- (7.6490666646784655,3.9641814145298078);
\draw [line width=1.2pt] (7.6490666646784655,3.9641814145298078)-- (6.5,3.);
\draw [line width=1.2pt] (6.5,-4.)-- (5.,-4.);
\draw [line width=1.2pt] (5.,-4.)-- (3.850933335321533,-3.0358185854701905);
\draw [line width=1.2pt] (3.850933335321533,-3.0358185854701905)-- (3.590461068821138,-1.5586069559518787);
\draw [line width=1.2pt] (3.590461068821138,-1.5586069559518787)-- (4.3404610688211385,-0.25956885027522114);
\draw [line width=1.2pt] (4.3404610688211385,-0.25956885027522114)-- (5.75,0.25346136471328107);
\draw [line width=1.2pt] (5.75,0.25346136471328107)-- (7.159538931178862,-0.2595688502752225);
\draw [line width=1.2pt] (7.159538931178862,-0.2595688502752225)-- (7.9095389311788615,-1.5586069559518803);
\draw [line width=1.2pt,dash pattern=on 3pt off 3pt] (7.9095389311788615,-1.5586069559518803)-- (7.649066664678466,-3.0358185854701913);
\draw [line width=1.2pt] (7.649066664678466,-3.0358185854701913)-- (6.5,-4.);
\draw [line width=1.2pt] (-0.5,-4.)-- (-2.,-4.);
\draw [line width=1.2pt] (-2.,-4.)-- (-3.149066664678467,-3.0358185854701905);
\draw [line width=1.2pt,dash pattern=on 3pt off 3pt] (-3.149066664678467,-3.0358185854701905)-- (-3.409538931178862,-1.5586069559518787);
\draw [line width=1.2pt] (-3.409538931178862,-1.5586069559518787)-- (-2.6595389311788615,-0.25956885027522114);
\draw [line width=1.2pt] (-2.6595389311788615,-0.25956885027522114)-- (-1.25,0.25346136471328107);
\draw [line width=1.2pt] (-1.25,0.25346136471328107)-- (0.15953893117886264,-0.2595688502752225);
\draw [line width=1.2pt] (0.15953893117886264,-0.2595688502752225)-- (0.909538931178862,-1.5586069559518803);
\draw [line width=1.2pt,dash pattern=on 1pt off 1pt] (0.909538931178862,-1.5586069559518803)-- (0.6490666646784662,-3.0358185854701913);
\draw [line width=1.2pt] (0.6490666646784662,-3.0358185854701913)-- (-0.5,-4.);
\begin{scriptsize}
\draw [fill=black] (-1.,3.) circle (2.5pt);
\draw[color=black] (-1.0605462114141779,2.6432201221073193) node {$1$};
\draw [fill=black] (1.,3.) circle (2.5pt);
\draw[color=black] (1.2697065195633166,2.5522038368203597) node {$2k+1$};
\draw [fill=black] (2.2469796037174667,4.563662964936059) circle (2.5pt);
\draw[color=black] (2.54393451358076,4.645578398420429) node {$2k$};
\draw [fill=black] (1.8019377358048383,6.513518789299706) circle (2.5pt);
\draw[color=black] (1.9614302877442147,6.720749702963105) node {$5$};
\draw [fill=black] (0.,7.381286267534822) circle (2.5pt);
\draw[color=black] (0.17751109611979396,7.576302784660524) node {$4$};
\draw [fill=black] (-1.8019377358048378,6.513518789299707) circle (2.5pt);
\draw[color=black] (-1.788440666078547,6.838952960020497) node {$3$};
\draw [fill=black] (-2.2469796037174667,4.56366296493606) circle (2.5pt);
\draw[color=black] (-2.424328324646777,4.026667658469103) node {$2k+2$};
\draw [fill=black] (5.,3.) circle (2.5pt);
\draw[color=black] (4.928325360467805,2.6250168650499273) node {$2$};
\draw [fill=black] (6.5,3.) circle (2.5pt);
\draw[color=black] (6.712480379723539,2.6340005797629677) node {$2(k+\ell)+3$};
\draw [fill=black] (7.6490666646784655,3.9641814145298078) circle (2.5pt);
\draw[color=black] (8.205147458429686,3.5351797179195223) node {$2(k+\ell)+1$};
\draw [fill=black] (7.9095389311788615,5.4413930440481195) circle (2.5pt);
\draw[color=black] (8.31436700077404,5.664960793634375) node {$2k+8$};
\draw [fill=black] (7.159538931178862,6.740431149724777) circle (2.5pt);
\draw[color=black] (7.367797633789652,6.9755953017665915) node {$2k+7$};
\draw [fill=black] (5.75,7.25346136471328) circle (2.5pt);
\draw[color=black] (5.438252385706096,7.503489756430957) node {$2k+6$};
\draw [fill=black] (4.3404610688211385,6.740431149724778) circle (2.5pt);
\draw[color=black] (4.073008106401692,6.9573920447091995) node {$2k+5$};
\draw [fill=black] (3.5904610688211385,5.441393044048121) circle (2.5pt);
\draw[color=black] (2.9444061688433854,5.501367307749159) node {$2k+4$};
\draw [fill=black] (3.850933335321533,3.9641814145298095) circle (2.5pt);
\draw[color=black] (3.2902680529338335,3.5169764608621303) node {$2k+3$};
\draw [fill=black] (5.,-4.) circle (2.5pt);
\draw[color=black] (-3.5359533435881834,9.83350665977712) node {$2$};
\draw [fill=black] (6.5,-4.) circle (2.5pt);
\draw[color=black] (-2.9898556318664222,9.83350665977712) node {$2(k+\ell)+3$};
\draw [fill=black] (7.649066664678466,-3.0358185854701913) circle (2.5pt);
\draw[color=black] (-2.9898556318664222,9.83350665977712) node {$2(k+\ell)+1$};
\draw [fill=black] (7.9095389311788615,-1.5586069559518803) circle (2.5pt);
\draw[color=black] (-3.372124030071655,9.83350665977712) node {$2k+8$};
\draw [fill=black] (7.159538931178862,-0.2595688502752225) circle (2.5pt);
\draw[color=black] (7.367797633789652,-0.032658665329290265) node {$2k+7$};
\draw [fill=black] (5.75,0.25346136471328107) circle (2.5pt);
\draw[color=black] (5.438252385706096,0.49523578933507484) node {$2k+6$};
\draw [fill=black] (4.3404610688211385,-0.25956885027522114) circle (2.5pt);
\draw[color=black] (4.073008106401692,-0.05086192238668216) node {$2k+5$};
\draw [fill=black] (3.590461068821138,-1.5586069559518787) circle (2.5pt);
\draw[color=black] (-3.372124030071655,9.83350665977712) node {$2k+4$};
\draw [fill=black] (3.850933335321533,-3.0358185854701905) circle (2.5pt);
\draw[color=black] (-3.372124030071655,9.83350665977712) node {$2k+3$};
\draw [fill=black] (-2.,-4.) circle (2.5pt);
\draw[color=black] (-3.5359533435881834,9.83350665977712) node {$3$};
\draw [fill=black] (-0.5,-4.) circle (2.5pt);
\draw[color=black] (-3.372124030071655,9.83350665977712) node {$2k+1$};
\draw [fill=black] (0.6490666646784662,-3.0358185854701913) circle (2.5pt);
\draw[color=black] (-3.4813435724160073,9.83350665977712) node {$2k$};
\draw [fill=black] (0.909538931178862,-1.5586069559518803) circle (2.5pt);
\draw[color=black] (-3.4267338012438313,9.83350665977712) node {$p+1$};
\draw [fill=black] (0.15953893117886264,-0.2595688502752225) circle (2.5pt);
\draw[color=black] (0.195714353177186,-0.032658665329290265) node {$p$};
\draw [fill=black] (-1.25,0.25346136471328107) circle (2.5pt);
\draw[color=black] (-1.5517983243324505,0.49523578933507484) node {$2k+2$};
\draw [fill=black] (-2.6595389311788615,-0.25956885027522114) circle (2.5pt);
\draw[color=black] (-2.9898556318664222,-0.10547169355885785) node {$1$};
\draw [fill=black] (-3.409538931178862,-1.5586069559518787) circle (2.5pt);
\draw[color=black] (-3.4267338012438313,9.83350665977712) node {$p-1$};
\draw [fill=black] (-3.149066664678467,-3.0358185854701905) circle (2.5pt);
\draw[color=black] (-3.5359533435881834,9.83350665977712) node {$4$};
\end{scriptsize}
\end{tikzpicture}
\caption{Minimum coprime labeling of $C_{2k+1}\cup C_{2\ell+1}$ with $p=3$}\label{case1}
\end{center}
\end{figure}
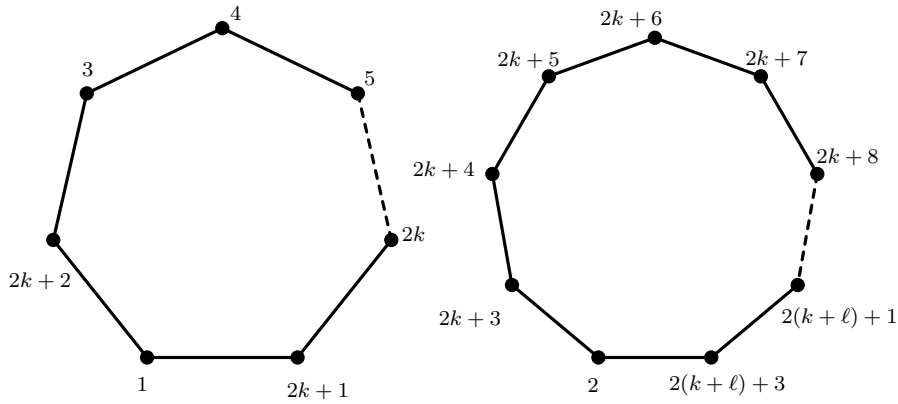

Next, we assume that the smallest prime $p$ that does not divide $2k+2$ is greater than $3$. Refer to Figure~\ref{case2} for the coprime labeling of this case.  Note the labeling of $C_{2\ell+1}$ is identical to the previous case.  In addition to edges connecting consecutive labels or labels adjacent to $1$, we observe that $3$ and $2k+1$ are coprime based on our assumption that $3\mid (2k+2)$.  Furthermore, the prime $p$ is chosen to be coprime with its adjacent label $2k+2$.  Therefore, this labeling is a coprime labeling of $C_{2k+1}\cup C_{2\ell+1}$ for our second and final case.

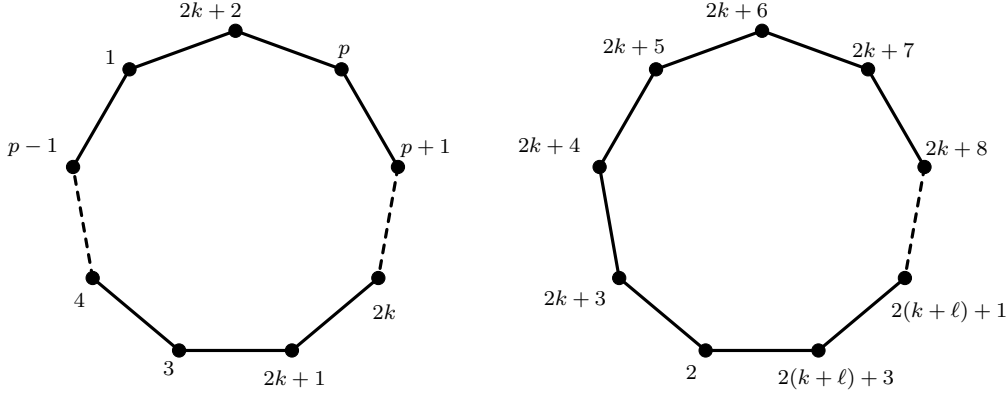
\begin{figure}[htb]
\begin{center}
\begin{tikzpicture}[line cap=round,line join=round,>=triangle 45,x=1.0cm,y=1.0cm]
\clip(-4.402825383888248,-4.813335078262535) rectangle (9.72810302323581,0.8504783318802561);
\draw [line width=1.2pt] (1.,3.)-- (-1.,3.);
\draw [line width=1.2pt] (-1.,3.)-- (-2.2469796037174667,4.56366296493606);
\draw [line width=1.2pt] (-2.2469796037174667,4.56366296493606)-- (-1.8019377358048378,6.513518789299707);
\draw [line width=1.2pt] (-1.8019377358048378,6.513518789299707)-- (0.,7.381286267534822);
\draw [line width=1.2pt] (0.,7.381286267534822)-- (1.8019377358048383,6.513518789299706);
\draw [line width=1.2pt,dash pattern=on 3pt off 3pt] (1.8019377358048383,6.513518789299706)-- (2.2469796037174667,4.563662964936059);
\draw [line width=1.2pt] (2.2469796037174667,4.563662964936059)-- (1.,3.);
\draw [line width=1.2pt] (6.5,3.)-- (5.,3.);
\draw [line width=1.2pt] (5.,3.)-- (3.850933335321533,3.9641814145298095);
\draw [line width=1.2pt] (3.850933335321533,3.9641814145298095)-- (3.5904610688211385,5.441393044048121);
\draw [line width=1.2pt] (3.5904610688211385,5.441393044048121)-- (4.3404610688211385,6.740431149724778);
\draw [line width=1.2pt] (4.3404610688211385,6.740431149724778)-- (5.75,7.25346136471328);
\draw [line width=1.2pt] (5.75,7.25346136471328)-- (7.159538931178862,6.740431149724777);
\draw [line width=1.2pt] (7.159538931178862,6.740431149724777)-- (7.9095389311788615,5.4413930440481195);
\draw [line width=1.2pt,dash pattern=on 3pt off 3pt] (7.9095389311788615,5.4413930440481195)-- (7.6490666646784655,3.9641814145298078);
\draw [line width=1.2pt] (7.6490666646784655,3.9641814145298078)-- (6.5,3.);
\draw [line width=1.2pt] (6.5,-4.)-- (5.,-4.);
\draw [line width=1.2pt] (5.,-4.)-- (3.850933335321533,-3.0358185854701905);
\draw [line width=1.2pt] (3.850933335321533,-3.0358185854701905)-- (3.590461068821138,-1.5586069559518787);
\draw [line width=1.2pt] (3.590461068821138,-1.5586069559518787)-- (4.3404610688211385,-0.25956885027522114);
\draw [line width=1.2pt] (4.3404610688211385,-0.25956885027522114)-- (5.75,0.25346136471328107);
\draw [line width=1.2pt] (5.75,0.25346136471328107)-- (7.159538931178862,-0.2595688502752225);
\draw [line width=1.2pt] (7.159538931178862,-0.2595688502752225)-- (7.9095389311788615,-1.5586069559518803);
\draw [line width=1.2pt,dash pattern=on 3pt off 3pt] (7.9095389311788615,-1.5586069559518803)-- (7.649066664678466,-3.0358185854701913);
\draw [line width=1.2pt] (7.649066664678466,-3.0358185854701913)-- (6.5,-4.);
\draw [line width=1.2pt] (-0.5,-4.)-- (-2.,-4.);
\draw [line width=1.2pt] (-2.,-4.)-- (-3.149066664678467,-3.0358185854701905);
\draw [line width=1.2pt,dash pattern=on 3pt off 3pt] (-3.149066664678467,-3.0358185854701905)-- (-3.409538931178862,-1.5586069559518787);
\draw [line width=1.2pt] (-3.409538931178862,-1.5586069559518787)-- (-2.6595389311788615,-0.25956885027522114);
\draw [line width=1.2pt] (-2.6595389311788615,-0.25956885027522114)-- (-1.25,0.25346136471328107);
\draw [line width=1.2pt] (-1.25,0.25346136471328107)-- (0.15953893117886264,-0.2595688502752225);
\draw [line width=1.2pt] (0.15953893117886264,-0.2595688502752225)-- (0.909538931178862,-1.5586069559518803);
\draw [line width=1.2pt,dash pattern=on 3pt off 3pt] (0.909538931178862,-1.5586069559518803)-- (0.6490666646784662,-3.0358185854701913);
\draw [line width=1.2pt] (0.6490666646784662,-3.0358185854701913)-- (-0.5,-4.);
\begin{scriptsize}
\draw [fill=black] (-1.,3.) circle (2.5pt);
\draw[color=black] (-1.2753324570753661,2.633530701740024) node {$1$};
\draw [fill=black] (1.,3.) circle (2.5pt);
\draw[color=black] (1.2800581050766229,2.538180307629876) node {$2k+1$};
\draw [fill=black] (2.2469796037174667,4.563662964936059) circle (2.5pt);
\draw[color=black] (4.1977801648471775,3.7777354310617994) node {$2k$};
\draw [fill=black] (1.8019377358048383,6.513518789299706) circle (2.5pt);
\draw[color=black] (4.846162844796189,3.7968055098838294) node {$5$};
\draw [fill=black] (0.,7.381286267534822) circle (2.5pt);
\draw[color=black] (-1.7139442699820508,3.7968055098838294) node {$4$};
\draw [fill=black] (-1.8019377358048378,6.513518789299707) circle (2.5pt);
\draw[color=black] (1.089357316856325,3.7968055098838294) node {$3$};
\draw [fill=black] (-2.2469796037174667,4.56366296493606) circle (2.5pt);
\draw[color=black] (-1.2562623782533362,3.567964564019474) node {$2k+2$};
\draw [fill=black] (5.,3.) circle (2.5pt);
\draw[color=black] (4.71267229304198,2.6144606229179943) node {$2$};
\draw [fill=black] (6.5,3.) circle (2.5pt);
\draw[color=black] (6.734100648177137,2.519110228807846) node {$2(k+\ell)+3$};
\draw [fill=black] (7.6490666646784655,3.9641814145298078) circle (2.5pt);
\draw[color=black] (8.240636875117488,3.5107543275533852) node {$2(k+\ell)+1$};
\draw [fill=black] (7.9095389311788615,5.4413930440481195) circle (2.5pt);
\draw[color=black] (9.556472313837542,3.7395952734177405) node {$2k+8$};
\draw [fill=black] (7.159538931178862,6.740431149724777) circle (2.5pt);
\draw[color=black] (5.4564053671011425,3.7395952734177405) node {$2k+7$};
\draw [fill=black] (5.75,7.25346136471328) circle (2.5pt);
\draw[color=black] (3.5112573272541057,3.7395952734177405) node {$2k+6$};
\draw [fill=black] (4.3404610688211385,6.740431149724778) circle (2.5pt);
\draw[color=black] (2.1382116520679624,3.7395952734177405) node {$2k+5$};
\draw [fill=black] (3.5904610688211385,5.441393044048121) circle (2.5pt);
\draw[color=black] (5.208494342414755,3.7395952734177405) node {$2k+4$};
\draw [fill=black] (3.850933335321533,3.9641814145298095) circle (2.5pt);
\draw[color=black] (3.2633463025677187,3.4916842487313557) node {$2k+3$};
\draw [fill=black] (5.,-4.) circle (2.5pt);
\draw[color=black] (4.81267229304198,-4.284258304766869) node {$2$};
\draw [fill=black] (6.5,-4.) circle (2.5pt);
\draw[color=black] (6.734100648177137,-4.3796086988770165) node {$2(k+\ell)+3$};
\draw [fill=black] (7.649066664678466,-3.0358185854701913) circle (2.5pt);
\draw[color=black] (8.240636875117488,-3.4879646001314772) node {$2(k+\ell)+1$};
\draw [fill=black] (7.9095389311788615,-1.5586069559518803) circle (2.5pt);
\draw[color=black] (8.335987269227637,-1.3139756144201034) node {$2k+8$};
\draw [fill=black] (7.159538931178862,-0.2595688502752225) circle (2.5pt);
\draw[color=black] (7.363413249304119,-0.0172102545220906) node {$2k+7$};
\draw [fill=black] (5.75,0.25346136471328107) circle (2.5pt);
\draw[color=black] (5.418265209457083,0.5167519524947382) node {$2k+6$};
\draw [fill=black] (4.3404610688211385,-0.25956885027522114) circle (2.5pt);
\draw[color=black] (4.045219534270939,0.0362803333441202) node {$2k+5$};
\draw [fill=black] (3.590461068821138,-1.5586069559518787) circle (2.5pt);
\draw[color=black] (2.9200848837711826,-1.2758354567760442) node {$2k+4$};
\draw [fill=black] (3.850933335321533,-3.0358185854701905) circle (2.5pt);
\draw[color=black] (3.2633463025677187,-3.3070346789535067) node {$2k+3$};
\draw [fill=black] (-2.,-4.) circle (2.5pt);
\draw[color=black] (-2.1360466346429438,-4.234258304766869) node {$3$};
\draw [fill=black] (-0.5,-4.) circle (2.5pt);
\draw[color=black] (-0.4650899347704134,-4.3796086988770165) node {$2k+1$};
\draw [fill=black] (0.6490666646784662,-3.0358185854701913) circle (2.5pt);
\draw[color=black] (0.7270258192377594,-3.4879646001314772) node {$2k$};
\draw [fill=black] (0.909538931178862,-1.5586069559518803) circle (2.5pt);
\draw[color=black] (1.2800581050766229,-1.3139756144201034) node {$p+1$};
\draw [fill=black] (0.15953893117886264,-0.2595688502752225) circle (2.5pt);
\draw[color=black] (0.193063612220926,-0.0172102545220906) node {$p$};
\draw [fill=black] (-1.25,0.25346136471328107) circle (2.5pt);
\draw[color=black] (-1.5804537182278424,0.5167519524947382) node {$2k+2$};
\draw [fill=black] (-2.6595389311788615,-0.25956885027522114) circle (2.5pt);
\draw[color=black] (-2.9107096298800746,-0.093490569810209) node {$1$};
\draw [fill=black] (-3.409538931178862,-1.5586069559518787) circle (2.5pt);
\draw[color=black] (-3.940493886269682,-1.2949055355980736) node {$p-1$};
\draw [fill=black] (-3.149066664678467,-3.0358185854701905) circle (2.5pt);
\draw[color=black] (-3.3256017580748784,-3.3354039695552404) node {$4$};
\end{scriptsize}
\end{tikzpicture}
\caption{Minimum coprime labeling of $C_{2k+1}\cup C_{2\ell+1}$ with $p>3$}\label{case2}
\end{center}
\end{figure}

It remains to show that this labeling is minimum, which is accomplished by demonstrating that the graph does not have a prime labeling since $2(k+\ell)+3$ is one larger than the size of the vertex set~$V$ of $C_{2k+1}\cup C_{2\ell+1}$.  No prime labeling exists because the independence number for the graph is $k+\ell$, which is smaller than the required $\dfrac{|V|}{2}$ of independent vertices needed for the even labels of a prime labeling (a fact first noted in \cite{FH}). Therefore, our labeling is a minimum coprime labeling.

\end{proof}

We now find a minimum coprime labeling for the union of the complete graph with a path or a star graph.  These classes of graphs, $K_m\cup P_n$ and $K_m \cup S_n$, were investigated by Youssef and El Sakhawi in~\cite{EY}. They proved that $K_m\cup P_n$ is prime if and only if $1\leq m\leq 3$ or $m=4$ and $n\geq 1$ is odd. See Figure~\ref{completePath} for an example of such a graph with a minimum coprime labeling.

\begin{figure}[htb]
\begin{center}
\includegraphics[scale=1.2]{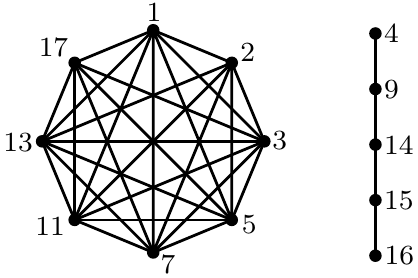}
\end{center}
\caption{Minimum coprime labeling of $K_8\cup P_5$ when $n\leq r$}\label{completePath}
\end{figure}

\begin{theorem}\label{unionCompletePath}
Let $m$ and $n$ be positive integers and let $r=2\left(\left\lfloor\frac{p_{m-1}}{2}\right\rfloor-m+2\right)$. The minimum coprime labeling for $K_m\cup P_n$ is 
\[
\pr(K_m\cup P_n) =\begin{cases}
m+n &\mbox{if $1\leq m\leq 3$ or $m=4$ and $n$ is odd,} \\
m+n+1 & \mbox{if $m=4$ and $n\geq 2$ is even,}\\
p_{m-1} &\mbox{if $m\geq 5$ and $n\leq r$}\\
p_{m-1}+ n-r+1 &\mbox{if $m\geq 5$ and $n> r$.}\\
\end{cases}
\]
\end{theorem}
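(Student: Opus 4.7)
My plan is to split along the four cases of the theorem. Cases~1 and 2 follow almost immediately from the Youssef--El Sakhawi characterization of prime $K_m\cup P_n$ already cited above: in Case~1 ($1\le m\le 3$, or $m=4$ with $n$ odd) the graph is prime, so $\pr(K_m\cup P_n)=m+n$; in Case~2 ($m=4$ with $n\ge 2$ even) the graph is not prime, so $\pr\ge m+n+1$, and a matching labelling can be exhibited by placing $\{1,2,3,5\}$ on $K_4$ and filling $P_n$ with an explicit coprime sequence from $\{4,6,7,\ldots,m+n+1\}$ (for instance, consecutive integers interleaved with the free even composites $4$ and $6$ at an endpoint).

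For Cases~3 and 4 ($m\ge 5$) the lower bound $\pr\ge p_{m-1}$ is inherited from the subgraph $K_m$ by the argument of Proposition~\ref{complete}: at most one of the $m$ pairwise-coprime labels on $K_m$ can equal $1$, so the remaining $m-1$ must be distinct primes, forcing the largest to be at least $p_{m-1}$. For the matching upper bound when $n\le r$, I would label $K_m$ with $\{1\}\cup\{p_1,\ldots,p_{m-1}\}$, which exhausts every prime at most $p_{m-1}$ and leaves exactly the composites in $\{1,\ldots,p_{m-1}\}$ available for $P_n$. Since every odd prime up to $p_{m-1}$ is now used by $K_m$, the only odd labels available for the path are the odd composites, and a direct count shows there are $\lfloor p_{m-1}/2\rfloor-m+2=r/2$ of them. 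Because no two even labels may ever be adjacent in a coprime path, I would then build $P_n$ as an alternating sequence of odd and even composites, choosing each even so that it is coprime with its flanking odd composites (which for small $m$ amounts to avoiding multiples of $3$ and $5$). When $n>r$, I would extend this composite-only segment at an endpoint by appending the consecutive integers $p_{m-1}+2,\,p_{m-1}+3,\ldots,\,p_{m-1}+n-r+1$; consecutive integers are pairwise coprime, so only the interface edge with the composite segment needs to be checked, and arranging the segment to terminate on an even label makes $p_{m-1}+2$ (odd, since $p_{m-1}$ is odd) a legitimate neighbour. The parity jump from $p_{m-1}$ to $p_{m-1}+2$ is exactly what produces the ``$+1$'' in the formula.

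The main obstacles will be twofold. First, giving an explicit interleaving in the composite segment so that every even is coprime with both flanking odd composites requires a careful (but finite) case analysis on the small odd prime factors appearing in the first $r/2$ odd composites; this analysis also dictates the parity of the endpoint of the segment, which is needed to hand off cleanly to the consecutive-integer extension. Second, and more delicate, establishing the tight lower bound in the regime $n>r$ demands a counting argument showing that for any $M<p_{m-1}+n-r+1$ the label set $\{1,\ldots,M\}$ cannot simultaneously support the clique $K_m$ and a coprime path of length~$n$, where one must count not merely composites but any label coprime to its intended neighbour in the path.
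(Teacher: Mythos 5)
Your plan is essentially the paper's own proof: the same four-way case split, the same lower bound $\pr\ge p_{m-1}$ inherited from the clique, the same labeling of $K_m$ by $\{1,p_1,\ldots,p_{m-1}\}$ with $P_n$ filled by an alternating odd/even composite sequence below $p_{m-1}$ (your count of $r/2$ available odd composites matches the paper's), and the same extension by consecutive integers $p_{m-1}+2,p_{m-1}+3,\ldots$ when $n>r$. The two obstacles you flag are exactly where the paper does its remaining work --- it gives the explicit interleaving $8,9,14,15,16,21,22,25,\ldots$ with ad hoc swaps near primes and twin primes to handle common factors of $3$ and $5$, and its lower bound for $n>r$ is a short parity/counting argument of the kind you describe --- so your outline is faithful to the published argument.
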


\begin{proof}
The case when $\pr(K_m\cup P_n)=m+n$ was already shown in~\cite{EY} since the union graph is prime under those conditions. 
When $m=4$ and $n$ is even, we can label $K_4$ with $1,2,3,5$ and use the integers $6,7,\ldots,n+5$ to label $P_n$, so $\pr(K_m\cup P_n)=m+n+1$ since the graph is not prime. 
Suppose that $n\leq r$. It is clear by our discussion on $K_m$ that $\pr(K_m\cup P_n)\geq p_{m-1}$ since $K_m$ is a subgraph of $K_m\cup P_n$. Label the vertices of $K_m$ with $1$ and the first $n-1$ primes. If $n\leq 5$, use the sequence $4,9,8,15,14$ to label $P_n$.  Otherwise, we label the vertices on the path $P_n$ with the following sequence, in which $m'$ denotes the largest label of the sequence:
\[
8,9,14,15,16,21,22,25,\ldots,m',4\;\;\text{if $n$ is odd or}\;\; 8,9,14,15,16,21,22,25,\ldots,4,m'\;\; \text{if $n$ is even}
\]
where we skip each prime $p_i$ and the next even integer $p_i+1$ starting with $p_9=23$.
There are $\left\lfloor \frac{p_{m-1}}{2}\right\rfloor$ odd composite numbers between $0$ and $p_{m-1}$ (including $1$). Of these, $m-2$ of them are odd prime numbers. So we can use at most $2\left(\left\lfloor\frac{p_{m-1}}{2}\right\rfloor-m+2\right)$ integers for the labels on $P_n$ that are all less than $p_{m-1}$. 
If $p_i$ is not a twin prime and $\gcd(p_i-1,p_i+2)=3$, then replace $p_i-1$ in the sequence with $p_i+1$. Since $p_i+2$ is divisible by $3$, $\gcd(p_i+1,p_i-2)=1$. If $p_i$ is a twin prime with $p_{i+1}$ and $\gcd(p_i-1,p_i+4)=5$, then replace $p_i-1$ in the sequence with $p_i+3$. Since $p_i+4$ is divisible by $5$, $\gcd(p_i+3,p_i-2)=1$. It is clear that each pair of adjacent labels in $K_m$ are relatively prime. Since the distance between any pair of labels in $P_n$ is at most $5$ and we ensure we make a switch if a pair of adjacent labels are divisible by $3$ or $5$, $\pr(K_m\cup P_n)$ is less than or equal to the indicated values above. Thus $\pr(K_m\cup P_n)=p_{m-1}$. 

Suppose that $n> r$. We label the first $r$ vertices as above. The vertices that come after $r$ on the path $P_n$ have the following sequence:
\[
 p_{m-1}+2,p_{m-1}+3,p_{m-1}+4,\ldots
\]
Thus $\pr(K_m\cup P_n)\leq p_{m-1}+ n-r+1$. To show $\pr(K_m\cup P_n)$ is bounded below by the same quantity, we must first recall that the path has at most $\left\lfloor \frac{n-1}{2}\right\rfloor$ odd numbers. By Proposition~\ref{complete}, we require at least $p_{m-1}$ integers to label $K_n$. There are $r$ integers we can place on the path $P_n$ ending on the $r^{\rm th}$ vertex which is always an even label. 
Our next label must be $p_{m-1}+2$, and so it follows that by having at most $\left\lfloor \frac{n-1}{2}\right\rfloor$ odd numbers, $\pr(K_m\cup P_n)=p_{m-1}+n-r+1$.
\end{proof}

The last disjoint union graph we investigate is between a complete graph and a star. See Figure~\ref{completeStar} for an example of the union of $K_8$ and the star $S_6$, with a minimum coprime labeling. To find the minimum coprime labeling of such a graph, we first must find the number of composite numbers not equal to $1$ that are less than or equal to a number $b$ and relatively prime with another number $t$; we denote such a quantity as $\varphi(t,b)$. 
For example, $\varphi(25,31)=14$ since there are $11$ composite numbers less than 25 that are relatively prime with $25$ and $3$ such numbers that are between $25$ and $31$. 
Next we will label the center of the star with an integer $k_b=\min(\{q^2\,:\, q^2<b,\varphi(q^2,b)\geq n, \text{ and $q$ is prime}\})$ for some composite integer $b\in[p_{m-1},p_m]$. For example when $n=10$, $k_{27}=25$ but $k_{28}=9$. Note that $k_b$ may not be defined for some values of $b$, but as long as there are enough composite integers less than $b$ that are relatively prime with $k_b$, then there will be some $k_b$ that is defined. 
For those $k_b$ that are not defined, let them be defined as $\infty$. The integer $b$ will represent the largest number among the pendant vertices of the star in $K_m\cup S_n$. Thus we pick the center of the star to be labeled with the integer $k_b$ such that we minimize the largest label among the vertices of $S_n$, which would be either $k_b$ or $b$, i.e., let $\alpha=\min(\{\max(k_b,b)\,:\,b\in[p_{m-1},p_m] \text{ and $b$ is composite}\})$.
We formalize this in the following result.

\begin{figure}[htb]
\begin{center}
\includegraphics[scale=1.1]{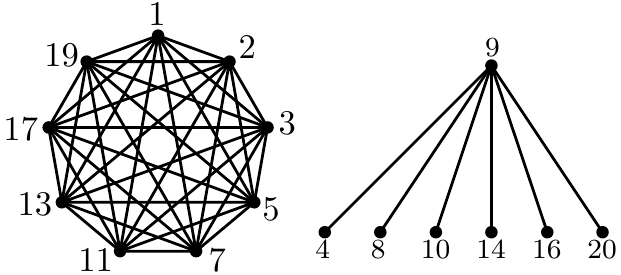}
\end{center}
\caption{Minimum coprime labeling of $K_8\cup S_6$ with $\pi(m+n+1)<m$, $n>r$, and $\alpha\leq p_m$}\label{completeStar}
\end{figure}
\begin{theorem}\label{unionCompleteStar}
Let $m,n$ be positive integers, $p$ be the largest prime number such that $p^2<p_{m-1}$, and $r=p_{m-1}-\left\lfloor \frac{p_{m-1}}{p}\right\rfloor -m+1$. 
The minimum coprime number for $K_m\cup S_n$ where $S_n$ is a star with $n$ pendant vertices is 
\[
\pr(K_m\cup S_n) = \begin{cases}
m+n+1 &\mbox{if $\pi(m+n+1)\geq m$,} \\
p_{m-1} &\mbox{if $\pi(m+n+1)< m$ and $n\leq r$} \\
\alpha &\mbox{if $\pi(m+n+1)<m$, $n>r$, and $\alpha \leq p_m$} \\
p_m & \mbox{otherwise}.\\ 
\end{cases}
\]
\end{theorem}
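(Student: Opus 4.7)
The proof naturally splits into four cases matching the piecewise formula. Case 1 is immediate from the Youssef--El Sakhawi result cited in the introduction: when $\pi(m+n+1)\geq m$, the graph $K_m\cup S_n$ is prime, so $\pr(K_m\cup S_n)=|V(K_m\cup S_n)|=m+n+1$. For Case 2, the lower bound $\pr(K_m\cup S_n)\geq p_{m-1}$ is immediate from Proposition~\ref{complete}, since $K_m$ is a subgraph. For the upper bound, I exhibit a coprime labeling with maximum label $p_{m-1}$: label $V(K_m)$ with $\{1,p_1,\dots,p_{m-1}\}$, label the center of the star with $p^2$, and assign the pendants any $n$ elements of
\[
A=\{i\in[1,p_{m-1}]:\gcd(i,p)=1\}\setminus\{1,p_1,\dots,p_{m-1}\}.
\]
A direct count gives $|A|=p_{m-1}-\lfloor p_{m-1}/p\rfloor-(m-1)=r$, so the hypothesis $n\leq r$ suffices.

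For Case 3, pick a composite $b^*\in[p_{m-1},p_m]$ that realizes $\alpha=\max(k_{b^*},b^*)$, and write $k_{b^*}=q^2$ for the corresponding prime $q$. The upper bound follows from the labeling that assigns $\{1,p_1,\dots,p_{m-1}\}$ to $V(K_m)$, $q^2$ to the center of the star, and $n$ composite integers from $[1,b^*]$ coprime with $q$ to the pendants; the assumption $\varphi(q^2,b^*)\geq n$ guarantees enough such composites, and since $q^2<b^*$ the maximum label is exactly $b^*=\alpha$. For the matching lower bound, suppose there is a coprime labeling with maximum label $M$ satisfying $p_{m-1}\leq M<\alpha\leq p_m$. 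Then $\pi(M)=m-1$, so $V(K_m)$ must receive the label $1$ together with one prime power for each of $p_1,\dots,p_{m-1}$; in particular the center label $c$ satisfies $c>1$. A short case analysis reduces the possibilities for $c$ to a prime power $q^a$ (centers with multiple distinct prime factors are strictly more restrictive on the pendants); swapping $q$ and $q^2$ between $K_m$ and the center if necessary, one is left with the requirement that at least $n$ composites in $[1,M]$ be coprime with $q$ and disjoint from the $K_m$ labels. This translates exactly into $\varphi(q^2,M)\geq n$ with $q^2\leq M$, producing a valid candidate $b'=M$ for the set defining $\alpha$ satisfying $\max(k_{b'},b')\leq M<\alpha$, a contradiction.

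Case 4 ($\alpha>p_m$) is handled with the same ideas. For the upper bound, label $V(K_m)$ with $\{1,p_1,\dots,p_{m-1}\}$, place $p_m$ on the center, and distribute any $n$ of the remaining $p_m-m$ integers in $[1,p_m]$ among the pendants; since $p_m$ is prime, every other label is automatically coprime with it, and there are enough pendant slots because the non-primality hypothesis $\pi(m+n+1)<m$ gives $p_m>m+n+1$. For the lower bound, a labeling with maximum label $M<p_m$ would, by the Case~3 analysis, produce a composite $b'\leq M$ with $k_{b'}$ defined and $\max(k_{b'},b')\leq p_m$, contradicting $\alpha>p_m$. The main obstacle I anticipate is the Case~3 lower bound: turning the existence of a hypothetical better labeling into a valid candidate for the minimum defining $\alpha$ requires carefully handling centers with more than one prime factor and $K_m$-labelings that use higher prime powers in place of some primes.
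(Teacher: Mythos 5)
Your proposal follows essentially the same route as the paper: the same four-case split, the same labeling of $K_m$ by $1,p_1,\dots,p_{m-1}$ with center $p^2$ and the count $|A|=r$ for Case 2, the same $k_b$/$\alpha$ construction for Case 3, and the same "prime center (or $1$) with free pendants" labeling for Case 4. The Case 3 lower-bound difficulty you flag (multi-prime centers, higher prime powers on $K_m$) is real, but your sketch of it is actually more detailed than the paper's, which simply asserts minimality of $\alpha$ "by construction."
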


\begin{proof}
By the work in \cite{EY}, if $\pi(m+n+1)\geq m$ then $\pr(K_m\cup S_n)=m+n+1$. Now suppose that $\pi(m+n+1)<m$. Since $\pr(K_m)=p_{m-1}$ by Proposition~\ref{complete}, $\pr(K_m\cup S_n)\geq p_{m-1}$. 
It is clear that $K_m\cup S_n$ will not be prime in this case. We first suppose that $n\leq r$. 
Again, label the vertices of $K_m$ using $1$ and the first $m-1$ primes. Let the center of the star be $p^2$. There are $p_{m-1}-m$ positive integers that are not used on the labels of $K_m$. Depending on the center of the star, we will not be able to use some of these positive integers for the labels of $S_n$. 
Since the center of the star is labeled $p^2$, there are at most $\left\lfloor \frac{p_{m-1}}{p}\right\rfloor-1$ positive integers less than $p_{m-1}$ that we can use to label the pendant vertices of $S_n$. So as long as $n\leq p_{m-1}-m-\left\lfloor\frac{p_{m-1}}{p}\right\rfloor +1=r$, $\pr(K_m\cup S_n)=p_{m-1}$. 

Now suppose that $n>r$. Then we can use at most $p_m-p_{m-1}$ additional positive integers on the pendant vertices of $S_n$ before it would be better to use $m$ primes on $K_m$ and letting $1$ be the center of the star, in which case $\pr(K_m\cup S_n)=p_m$. 
Since there are many options for the center of the star, we aim to find the smallest composite number less than $p_m$ that is relatively prime with at least $n$ composite numbers less than some fixed value $b\in[p_{m-1},p_m]$. Since $q^2$ ($q$ prime) is relatively prime with at least the same number of composite numbers as $qt$ for some integer $t$, we would choose $q^2$ over $qt$ at all times. Thus $k_b$ is defined to be a candidate for the center of the star for each composite number $b\in[p_{m-1},p_m]$. If this were a minimum coprime labeling then its minimum coprime number is $\max(k_b,b)$. 
Hence we look for the minimum among all of these maximums and denote such a value as~$\alpha$. By construction, there are more than $n$ relatively prime composite numbers less than $\alpha$ that are not~$1$, so the pendant vertices of the star can be labeled. Thus, this is a coprime labeling.
By construction, we choose $\alpha$ to be minimum and thus the result follows. 
\end{proof}

Next we consider the corona of a complete graph with the empty graph of one or two vertices.  In \cite{EY}, it was shown that $K_n\odot K_1$ and $K_n\odot\overline{K}_2$ are prime under certain conditions, particularly if $n\leq 7$ for $K_1$ and if $n\leq 16 $ for $\overline{K}_2$. Later, El Sonbaty, Mahran, and Seoud \cite{EMS} showed that $K_n\odot \overline{K}_m$ is not prime if $n>\pi(n(m+1))+1$ and conjectured that $K_n\odot \overline{K}_m$ is prime if $n<\pi(n(m+1))+1$.
We give the minimum coprime number for $K_n\odot K_1$ and $K_n\odot \overline{K}_2$ below. An example of $K_n\odot K_1$ is given in Figure~\ref{completeK1}.

\begin{figure}[htb]
\begin{center}
	\includegraphics[scale=1]{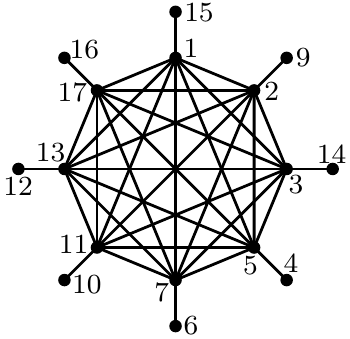}
\end{center}
\caption{Minimum coprime labeling of $K_8\odot K_1$}\label{completeK1}
\end{figure}

\begin{theorem}
Let $n$ be a positive integer. $\pr(K_n\odot K_1)= p_{n-1}$ for $n>7$.
\end{theorem}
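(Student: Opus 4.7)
The plan is to sandwich $\pr(K_n\odot K_1)$ between the same value $p_{n-1}$ from above and below.

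\textbf{Lower bound.} Since $K_n$ is a subgraph of $K_n\odot K_1$, any coprime labeling of $K_n\odot K_1$ restricts to a coprime labeling of $K_n$, which by Proposition~\ref{complete} must use a largest label of at least $p_{n-1}$. Hence $\pr(K_n\odot K_1)\ge p_{n-1}$.

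\textbf{Upper bound via explicit labeling.} I would first label the $n$ vertices of the inner $K_n$ with the pairwise coprime set $\{1,2,3,5,7,11,\dots,p_{n-1}\}=\{1\}\cup\{p_j:1\le j\le n-1\}$. Denote by $u_q$ the pendant attached to the inner vertex carrying label $q$. Then assign pendant labels as follows:
\[
u_1=6,\quad u_2=9,\quad u_3=4,\quad u_5=8,\quad u_7=15,
\]
and for each $i$ with $5\le i\le n-1$, set $u_{p_i}=p_i-1$.

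\textbf{Verification.} For $n\ge 8$ we have $p_{n-1}\ge 17$, so the fixed labels $4,6,8,9,15$ all lie in $[1,p_{n-1}]$. Each $p_i-1$ with $i\ge 5$ (so $p_i\ge 11$) is an even integer in $[10,p_{n-1}-1]$, hence composite; distinct $i$ give distinct $p_i-1$, and none coincides with any element of $\{4,6,8,9,15\}$ or with any inner label (which is $1$ or an odd prime). Coprimality on every pendant edge is then immediate: $\gcd(1,6)=\gcd(2,9)=\gcd(3,4)=\gcd(5,8)=\gcd(7,15)=1$, and $\gcd(p_i,p_i-1)=1$ for each $i\ge 5$. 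The clique $K_n$ is coprimely labeled because its labels are $1$ together with distinct primes.

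\textbf{Main obstacle.} The only real subtlety is arranging the five explicit small-prime pendants so that the clean uniform rule $u_{p_i}=p_i-1$ can take over for $i\ge 5$ without collisions. One must break the rule at $i=4$ since $p_4-1=6$ is already committed to $u_1$, and the replacement $u_7=15$ must itself be coprime with $7$ and disjoint from every future value $p_i-1$ (which it is, being odd). Once this five-vertex base case is in place, the uniform rule handles every $n\ge 8$, and combined with the matching lower bound yields $\pr(K_n\odot K_1)=p_{n-1}$.
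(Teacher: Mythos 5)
Your proof is correct and takes essentially the same approach as the paper: the lower bound from the $K_n$ subgraph via Proposition~\ref{complete}, and an explicit labeling of the clique by $1$ and the first $n-1$ primes with pendants receiving nearby even or composite numbers. The only difference is bookkeeping — you attach $p_i-1$ to the vertex labeled $p_i$ itself (which makes coprimality immediate), while the paper attaches $p_i-1$ to the vertex labeled $p_{i-1}$ and uses a slightly different set of exceptional small pendant labels.
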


\begin{proof}
By Proposition~\ref{complete} and Observation~\ref{primeFromPrime}, $\pr(K_n\odot K_1)\geq p_{n-1}>16$.  Let $u_1,u_2,\ldots,u_n$ be the vertices in $K_n$ and for each $i\in\{1,\ldots,n\}$, let $v_{i}$ be the vertices adjacent to $u_i$ for the $n$ copies of $K_1$. Label the vertices $u_{i}$ with $p_{i-1}$ for $i\in\{2,\ldots,n-1\}$ and label $u_1$ with $1$. Then label $v_i$ with $p_{i}-1$ for $i\in\{4,\ldots,n\}$. 
We label $v_1,v_2,v_3$ with $15,9,14$ respectively. Thus, our result follows since this is a minimum coprime labeling with largest label being $p_{n-1}$.
\end{proof}

\begin{theorem}
If $n>16$ then $\pr(K_n\odot \overline{K}_2)=p_{n-1}$.
\end{theorem}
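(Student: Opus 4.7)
The lower bound is immediate: $K_n$ is a spanning subgraph of $K_n\odot\overline{K}_2$, so by Observation~\ref{primeFromPrime} and Proposition~\ref{complete} we have $\pr(K_n\odot\overline{K}_2)\geq p_{n-1}$.

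For the upper bound I would construct a coprime labeling drawn from $[1,p_{n-1}]$. Let $u_1,\ldots,u_n$ be the central vertices and $v_i,w_i$ the two pendants attached to $u_i$. Label $u_1=1$ and $u_i=p_{i-1}$ for $i=2,\ldots,n$, so that the clique labels are pairwise coprime and consume $n$ of the available integers. The remaining $p_{n-1}-n$ labels in $[4,p_{n-1}]$ are exactly the composite numbers in that range, and the $2n$ pendants need distinct labels from this pool. The counting inequality $p_{n-1}\geq 3n$ is the source of the hypothesis $n>16$: it fails at $n=16$ where $p_{15}=47<48$, first holds at $n=17$ where $p_{16}=53\geq 51$, and continues to hold for $n>17$ by a short direct check at the next few values together with Rosser's bound $p_n>n\ln n$ for $n$ large enough.

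The remaining task is to assign the $2n$ composites to pendants so that $\gcd(\mathrm{label}(v_i),p_{i-1})=\gcd(\mathrm{label}(w_i),p_{i-1})=1$ for every $i\geq 2$, with no constraint when $i=1$. I plan to carry this out via Hall's theorem on the bipartite graph joining pendants to allowable composites. For any $T\subseteq\{1,\ldots,n\}$, the composites unusable by every pendant with index in $T$ are exactly those divisible by every $p_{i-1}$ with $i\in T$, of which there are at most $\lfloor p_{n-1}/\prod_{i\in T}p_{i-1}\rfloor$ (and zero whenever $1\in T$). The tightest instance is $T=\{2\}$, which requires at least two odd composites in $[4,p_{n-1}]$; this reduces to $p_{n-1}-n+1-\lfloor p_{n-1}/2\rfloor\geq 2$, which holds throughout $n>16$. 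The cases $T=\{2,3\}$ and $T=\{2,3,5\}$ reduce to elementary estimates on $\lfloor p_{n-1}/6\rfloor$ and $\lfloor p_{n-1}/30\rfloor$; for $|T|\geq 4$, the product $\prod_{i\in T}p_{i-1}\geq 210$ eventually exceeds $p_{n-1}$ and forces the forbidden count to vanish entirely.

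The main obstacle is therefore the arithmetic verification of Hall's condition across the relevant small $T$; once the coarse inequality $p_{n-1}\geq 3n$ has been secured, the case-checking is essentially automatic. An explicit labeling paralleling the $K_n\odot K_1$ construction is also feasible, for instance setting $v_i=p_{i-1}-1$ for large $i$ and pairing it with a suitable even composite $w_i$, while handling $i\in\{1,2,3\}$ by ad hoc choices such as $\{v_2,w_2\}=\{9,15\}$; this would match the style of the authors' other proofs. Either route produces a coprime labeling with largest label $p_{n-1}$, completing the argument.
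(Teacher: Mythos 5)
Your proposal is correct in outline but takes a genuinely different route from the paper. The paper's proof is purely constructive: it writes down an explicit list of $32$ composite labels for the pendants of $u_1,\ldots,u_{16}$ and then, for $i>16$, assigns $v_{i,1},v_{i,2}$ the labels $p_{i-1}-2,p_{i-1}-1$ with an ad hoc adjustment when $p_{i-1}$ is the smaller of a pair of twin primes (so that $p_{i-1}-2$ is not already a clique label). Your Hall's-theorem argument replaces this bookkeeping with a counting criterion, which is arguably cleaner and would generalize more readily to $K_n\odot\overline{K}_m$; the price is that you must actually verify Hall's condition for \emph{all} $T$, and your treatment of $|T|\geq 4$ is the one place that needs repair. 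It is not true that $\prod_{i\in T}p_{i-1}$ exceeds $p_{n-1}$ for every $T$ with $|T|\geq 4$: for $T=\{2,3,4,5\}$ the product is exactly $210$, which is below $p_{n-1}$ as soon as $n\geq 48$, so the forbidden count does not vanish there. The condition still holds, but to see it uniformly you need something like: if $P_t=p_1\cdots p_t\leq p_{n-1}$ then $t\leq 1+\log_3(p_{n-1}/2)$, whence $2t$ is logarithmic in $p_{n-1}$ while $p_{n-1}-n-\lfloor p_{n-1}/P_t\rfloor\geq \tfrac{5}{6}p_{n-1}-n\geq\tfrac{3}{2}n$; and if $P_t>p_{n-1}$ the requirement is just $p_{n-1}\geq n+2t$, which follows from $t\leq n-1$ and $p_{n-1}\geq 3n-2$. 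Two small further nits: $K_n$ is a subgraph but not a \emph{spanning} subgraph of $K_n\odot\overline{K}_2$ (the lower bound only needs monotonicity of $\pr$ under taking subgraphs, so this is cosmetic), and for $t=1$ you should note that the multiples of $p_{i-1}$ include the prime $p_{i-1}$ itself, which is not in the composite pool---this only helps you. With those adjustments your argument is complete and yields the same value $p_{n-1}$.
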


\begin{proof}
By Proposition~\ref{complete}, $\pr(K_n\odot K_1)\geq p_{n-1}\geq 53$. 
We will label the vertices in $K_n$ and the first~16 corresponding copies of $\overline{K}_2$, and then we will label the remaining vertices in a more structured manner. Let $u_1,u_2,\ldots,u_n$ be the vertices in $K_n$ and $v_{i,1},v_{i,2}$ be the vertices for the $n$ copies of $\overline{K}_2$. Label the vertices $u_i$ for $i\in\{2,3,\ldots,n\}$ with $p_{i-1}$ and label $u_1$ with $1$. We label the sequence of vertices $v_{1,1},v_{1,2},v_{2,1},v_{2,2},v_{3,1},v_{3,2},\ldots,v_{16,2}$ respectively with the following sequence of labels:
\begin{align*}
&4,6,9,15,8,14,12,16,10,18,20,21,22,24,25,26,27,28,\\
&30,32,33,34,35,36,38,39,40,42,44,45,46,48.
\end{align*}
For $i>16$, if $p_{i-1}$ is not the first of a pair of twin primes, we label $v_{i,1},v_{i,2}$ with $p_{i-1}-2,p_{i-1}-1$. For $i>16$, if $p_{i-1}$ is the first of a pair of twin prime vertices, we label $v_{i,1},v_{i,2}$ with $p_{i-1}-3,p_{i-1}-2$ and label $v_{i+1,1},v_{i+1,2}$ with $p_{i-1}-1,p_{i-1}+1$. Therefore, we have obtained our result since this is clearly a minimum coprime labeling with largest label being $p_{n-1}$.
\end{proof}

As long as there are enough integers less than $p_{n-1}$ the authors believe that the minimum prime labeling of $K_n\odot \overline{K}_m$ is $p_{m-1}$. It is likely that several of the small $m$ can be easily done as long as enough care is taken with the labeling of the first several pendant vertices, but a generalization eludes discovery. As such, we leave this as an open problem in Section~\ref{remarks} and a conjecture below.

\begin{conjecture}
For all $m>0$, there exists an $M>m$ such that for all $n>M$, $\pr(K_m\odot\overline{K}_m)=p_{n-1}$.
\end{conjecture}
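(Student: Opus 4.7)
The plan is to generalize the constructions of the two preceding theorems. Note that $\pr(K_n\odot\overline{K}_m)\geq p_{n-1}$ by Proposition~\ref{complete} and Observation~\ref{primeFromPrime}, so it suffices to exhibit, for all sufficiently large $n$, a coprime labeling that uses only integers in $\{1,2,\ldots,p_{n-1}\}$. By the prime number theorem, $p_{n-1}\sim n\ln n$, so for $n$ large the number of composite integers at most $p_{n-1}$, namely $p_{n-1}-n$, comfortably exceeds the required $mn$. Thus raw counting is not the obstacle; the difficulty is in the constructive assignment of labels.

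As in the earlier proofs, label $K_n$ by $u_1\mapsto 1$ and $u_i\mapsto p_{i-1}$ for $2\leq i\leq n$. For each $i\geq 2$, the goal is to choose a window $W_i$ of $m$ composite integers in $\{2,\ldots,p_{n-1}\}$, pairwise disjoint across $i$, all coprime with $p_{i-1}$, to serve as the labels of the $m$ pendants attached to $u_i$. The natural candidate is a block of $m$ consecutive integers immediately below $p_{i-1}$; when $m<p_{i-1}$ no element of such a block is divisible by $p_{i-1}$, so coprimality with $p_{i-1}$ is automatic. If these windows were pairwise disjoint and avoided the other primes $p_0,\ldots,p_{n-2}$, the construction would finish immediately.

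The main obstacle is \emph{prime clusters}. When several consecutive primes $p_{i-1},p_i,\ldots,p_{i+k-1}$ lie within distance $m$ of one another (the $m=2$ proof handles this only for $k=2$, i.e.\ for twin primes), their candidate windows overlap, and every label in the contested region must be coprime with \emph{every} prime in the cluster. My plan is to associate with each maximal cluster a widened interval $[p_{i-1}-cm,\,p_{i+k-1}+cm]$ for a constant $c=c(m)$ and, using Bertrand-type density estimates, show that this interval contains at least $km$ composite integers none of which is divisible by any prime in the cluster, then reassign these labels among the $k$ copies of $\overline{K}_m$ whose centers lie in the cluster. For any fixed $m$, standard bounds on prime gaps imply that cluster sizes are uniformly bounded as $n\to\infty$, so the reallocation is feasible past some threshold $M=M(m)$.

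For the initial block $i\leq M$, the approach falls back on an explicit ad hoc labeling in the spirit of $15,9,14$ and $4,6,9,15,8,\ldots$ from the $m=1,2$ cases; since $M$ depends only on $m$, this is a finite check that can in principle be verified by computer for any specific $m$. The hard part, and the reason the statement remains conjectural, is proving uniformly in $n$ that the cluster reallocation never exhausts the supply of available composite integers: a full proof would require a careful potential-function or sliding-window argument tracking the running deficit of usable composites against the accumulating demand $mn$, and ruling out the possibility that an especially dense region of primes ever forces a label to exceed $p_{n-1}$.
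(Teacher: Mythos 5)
This statement is left as a \emph{conjecture} in the paper --- the authors explicitly say that ``a generalization eludes discovery'' and offer no proof --- so there is no proof of record to compare against, and your proposal does not close the gap either: it is a strategy outline whose two load-bearing steps are themselves deferred. (Incidentally, the statement as printed contains a typo, $K_m\odot\overline{K}_m$ for $K_n\odot\overline{K}_m$; you have silently corrected this, which is the right reading.)

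The concrete gaps are these. First, the cluster-reallocation step asserts, via unspecified ``Bertrand-type density estimates,'' that the widened interval around each maximal prime cluster contains enough usable composites; but nothing is proved about the \emph{disjointness} of the widened intervals of neighboring clusters, and once two widened intervals collide the reallocation is no longer local and the argument collapses back into the global accounting problem. Second, and more fundamentally, the global feasibility claim --- that the running demand of $mn$ composite labels, each constrained to be coprime with its center's prime and distinct from all other labels, never forces a label above $p_{n-1}$ --- is exactly the content of the conjecture, and you explicitly flag it as unresolved. A raw count ($p_{n-1}-n\sim n\ln n$ composites against demand $mn$) does not suffice because the coprimality constraints are not uniform: roughly half of all composites below $p_{n-1}$ are even and hence unusable for the $m$ pendants of the vertex labeled $2$, a third are unusable for the vertex labeled $3$, and so on; turning this into a proof requires a matching or defect--Hall argument over the bipartite graph of pendant slots versus unused composites, which your sketch does not supply. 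Your reduction of the initial segment $i\leq M$ to a finite computer check is reasonable and consistent with how the paper handles $m=1,2$, but the infinite tail is where the difficulty lives, and it remains open in your write-up just as it does in the paper.
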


\section{Powers of Paths and Cycles}\label{powers}

We consider $P_n^2$ to have $n\geq 2$ vertices. Seoud and Youssef~\cite{SY} proved that this graph is not prime when $n=6$ and $n\geq 8$.  We will construct a minimum coprime labeling of $P_n^2$ for these cases.  A lower bound for the minimum coprime number for the graph would be obtained by using the maximum amount of even labels that can be used based on the independence number of $P_n^2$, shown in~\cite{SY} to be $\displaystyle \left\lceil\frac{n}{3}\right\rceil$, along with the smallest possible odd labels.
Figure~\ref{pathsquared} shows a minimum coprime labeling of the graph $P_{6}^2$ and $P_{10}^2$, and in the former case, the path is represented as $v_1,v_2,\ldots,v_6$ with the horizontal edges connecting vertices of distance $2$.  Since the independence number of $P_6^2$ is $2$, we can only use two even labels, which prevents a prime labeling.  Instead, a minimum coprime labeling can be achieved with $\pr(P_6^2)=7$.

\begin{figure}[htb]
\begin{center}
\includegraphics[scale=1]{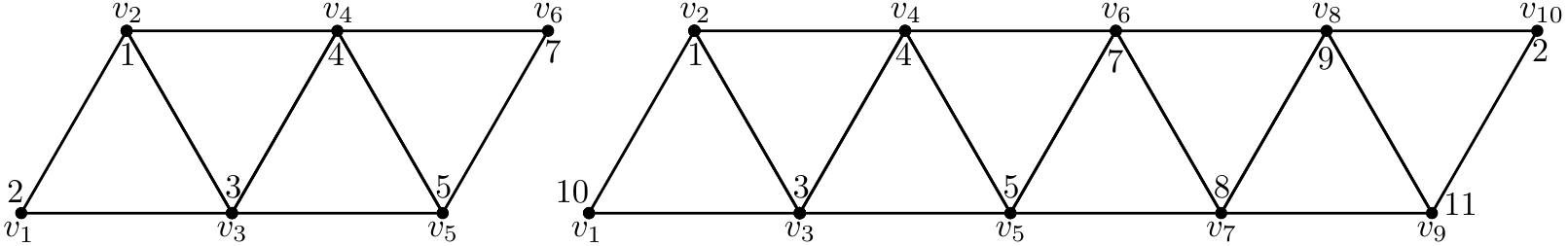}
\caption{Minimum coprime labeling of $P_6^2$(left) and $P_{10}^2$ (right)}\label{pathsquared}
\end{center}
\end{figure}

The following theorem regarding the minimum coprime number of the path squared was verified for the $n=6$ through the labeling in Figure~\ref{pathsquared}, and the general case of $n\geq 8$ will be proven by a series of lemmas. 

\begin{theorem}\label{pr_path_squared}
Let $n=6$ or $n\geq 8$.  The minimum coprime number of $P_n^2$ is given by 
$$\pr(P_n^2)=\begin{cases}
4k-1 & \text{if } n=3k \text{ or }3k+1\\
4k+1 & \text{if } n=3k+2.\\
\end{cases}$$
\end{theorem}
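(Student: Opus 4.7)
The plan is to prove matching upper and lower bounds; the upper bound is the substantive part and, as the authors indicate, will be deferred to a series of lemmas.

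For the lower bound, the key observation is that the even labels of any coprime labeling form an independent set, since any two even integers share the factor $2$. Since Seoud and Youssef showed that the independence number of $P_n^2$ equals $\lceil n/3\rceil$, at most $\lceil n/3\rceil$ of the $n$ labels can be even, so at least $n-\lceil n/3\rceil$ labels are odd. For $n=3k$ this forces at least $2k$ odd labels, and the smallest $2k$ odd positive integers are $1,3,\ldots,4k-1$, so some label is at least $4k-1$. The cases $n=3k+1$ (still $\geq 2k$ odd labels) and $n=3k+2$ ($\geq 2k+1$ odd labels, forcing a label $\geq 4k+1$) are analogous, yielding exactly the claimed lower bounds.

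For the upper bound, the label set is essentially dictated by the lower-bound analysis: for $n=3k+2$ take $\{1,3,\ldots,4k+1\}\cup\{2,4,\ldots,2k+2\}$, with analogous sets for the other two residue classes. Since $v_i$ is adjacent to $v_{i\pm 1}$ and $v_{i\pm 2}$ in $P_n^2$, a labeling is coprime exactly when every three consecutive labels are pairwise coprime. A natural layout places exactly one even label in each block of three consecutive positions, for instance on $v_3, v_6, v_9, \ldots$, in increasing order, since those positions sit at pairwise distance at least $3$ in the underlying path; the remaining positions are then filled by the odd labels in some carefully chosen order.

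The main obstacle, and what the lemmas must address, is ordering the odd labels so that no two multiples of a common small prime (chiefly $3$, with $5$ occasionally causing trouble) ever appear within distance~$2$. A near-increasing listing of the odd labels almost works but requires the sort of local swap used earlier in the paper (e.g.\ in Theorem~\ref{unionCompletePath}) whenever two odd multiples of $3$ would otherwise be placed at distance at most~$2$, and similarly for $5$. I would split by $n \bmod 3$, matching the three cases of the theorem, and refine further by $n \bmod 6$ to cope with endpoint parity of the path, verifying in each residue class that the resulting permutation of the label set meets the three-consecutive-pairwise-coprime condition. The small case $n=6$ is already handled separately by the labeling in Figure~\ref{pathsquared}, so the lemmas need only treat $n\geq 8$.
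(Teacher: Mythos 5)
Your lower bound is correct and is exactly the paper's: even labels form an independent set, the independence number of $P_n^2$ is $\lceil n/3\rceil$, so at least $n-\lceil n/3\rceil$ labels are odd and the maximum label is at least the corresponding odd number. The $n=6$ case via the figure is also fine. The problem is the upper bound, which you defer entirely to unwritten lemmas, and the specific construction you sketch has an obstruction that the proposed ``local swaps'' cannot repair. Take $n=3k$ with your label set $\{1,3,\ldots,4k-1\}\cup\{2,4,\ldots,2k\}$. This set contains $\lfloor (2k+1)/3\rfloor$ odd multiples of $3$ and $\lfloor k/3\rfloor$ even ones, which is (essentially) $k$ multiples of $3$ in total --- equal to the independence number. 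Hence the multiples of $3$ must themselves form a \emph{maximum} independent set of $P_{3k}^2$, and such sets are rigid: their gaps sum to at most $3k-1$, so they are arithmetic progressions of difference $3$ up to two units of slack. In your near-increasing interleaved layout the odd labels occupy two of every three positions, so the odd multiples of $3$ (every third odd label) land at consecutive position-gaps of $4$ and $5$; these alone span the whole path, and between two multiples of $3$ at distance $4$ or $5$ there is no position at distance $\geq 3$ from both. Consequently there is nowhere to put $6,12,\ldots,2k$ at all. Repairing this forces a global restructuring (segregating all even multiples of $3$ into one contiguous block whose odd labels are all coprime to $3$, and similarly coordinating the multiples of $5$), which is a different construction, not a local perturbation; and once adjacent labels are no longer close in value you must also rule out common factors $7, 11, \ldots$ by hand.

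The paper avoids every one of these issues by changing the label set rather than the layout: it uses only even labels coprime to $15$, namely $2,4,8,14,16,22,\ldots$, interleaved in the pattern even--odd--odd with all odd numbers in order. This costs nothing in the maximum (the $k$-th even number coprime to $15$ is about $15k/4$, still below $4k-1$), it makes even labels automatically coprime to every odd multiple of $3$ or $5$, and --- after a mild period-$45$ adjustment shifted by $60$ --- it guarantees that adjacent labels differ by at most $5$, so no common factor $\geq 6$ can occur and the coprimality check reduces to the primes $2,3,5$. The case where the largest label happens to be even is then fixed by a single swap ($x_1\mapsto 10$, largest even label $\mapsto 2$). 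If you want to complete your argument you should either adopt a label set of this kind or genuinely construct (and verify) the block decomposition your label set forces; as written, the central claim of the upper bound is unproven.
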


Assume that $n\geq 8$ for the following lemmas that will prove the general case of Theorem~\ref{pr_path_squared}.  To construct a minimum coprime labeling of $P_n^2$, we define a sequence $X=\{x_i\}_{i=1}^{\infty}$ of integers for which the first $n$ will be used as labels for the vertices $\{v_1,\ldots, v_n\}$.  The sequence $X$ consists of a length $45$ segment with the repeated pattern of even, odd, and odd integers, with the subsequent terms of the sequence defined by shifting the entries by $60$.  The first $30$ odd numbers are included in the initial segment along with the first $15$ even numbers that are not multiples of $3$ or $5$.  The definition of the sequence is the following:
\begin{align*}
\{x_1,\ldots,x_{45}\}=\{&2,1,3,4,5,7,8,9,11,14,13,15,16,17,19,22,21,23,26,25,27,28,29,31,32,33,35,\\
&34,37,39,38,41,43,44,45,47,46,49,51,52,53,55,56,57,59\}
\end{align*} and for $i> 45$, $x_i=x_{i-45}+60$.

In order to examine whether adjacent vertices will have relatively prime labels, the following fact about the distance between the labels of such vertices will be quite useful.

\begin{lemma}\label{distanceLemma1}
Given adjacent vertices $v_i$ and $v_j$ in $P_n^2$ for some $1\leq i,j\leq n$, the labels in the sequence $\{x_1,\ldots,x_n\}$ satisfy $|x_i-x_j|\leq 5$.
\end{lemma}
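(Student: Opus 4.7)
The plan is as follows. Since the edges of $P_n^2$ are precisely the pairs $(v_i,v_j)$ with $1\le |i-j|\le 2$, proving the bound reduces to showing
\[
|x_i - x_{i+1}| \le 5 \quad\text{and}\quad |x_i - x_{i+2}| \le 5
\]
for every valid index $i$. So the whole lemma is a finite‐range statement about consecutive entries and next‐to‐consecutive entries of the sequence $X$.

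First I would exploit the periodicity $x_{i+45}=x_i+60$. This translation preserves absolute differences, so any pair of indices $(i,i+1)$ or $(i,i+2)$ lying entirely inside one of the blocks $\{45k+1,\dots,45k+45\}$ has the same difference as the corresponding pair in the first block $\{1,\dots,45\}$. Thus I only need to (i) check differences within the initial block of $45$ terms, and (ii) check the pairs that straddle a block boundary, namely $(x_{45+45k},x_{46+45k})$, $(x_{44+45k},x_{46+45k})$, and $(x_{45+45k},x_{47+45k})$, which by periodicity all reduce to checking $|x_{44}-x_{46}|$, $|x_{45}-x_{46}|$, and $|x_{45}-x_{47}|$ where $x_{46}=x_1+60=62$ and $x_{47}=x_2+60=61$.

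Step (i) is a direct tabulation: organize the 45 listed values into the 15 triples $(x_{3k+1},x_{3k+2},x_{3k+3})$ of shape even/odd/odd, and for each $k=0,\dots,14$ record the three intra‐triple differences together with the two inter‐triple differences linking triple $k$ to triple $k+1$. A scan of the entries shows that the largest values that appear are $|x_{i}-x_{i+2}|=5$ in situations such as $(x_8,x_{10})=(9,14)$ and $(x_{14},x_{16})=(17,22)$, coming from the occasional jump of $3$ in the even coordinate between consecutive triples; the other differences are all at most $4$. Step (ii) at the boundary gives $|x_{45}-x_{46}|=|59-62|=3$, $|x_{44}-x_{46}|=|57-62|=5$, and $|x_{45}-x_{47}|=|59-61|=2$, all at most $5$.

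There is no real technical obstacle here; the main thing to be careful about is the handful of positions where the sequence visibly deviates from the simple ``add $2$ then add $2$'' pattern (the entries $14,13,15$ at positions $10$–$12$; $22,21,23$ at $16$–$18$; $26,25,27$ at $19$–$21$; $34,37,39$ at $28$–$30$; $38,41,43$ at $31$–$33$; $46,49,51$ at $37$–$39$), since that is where the largest gaps of $5$ arise. Once those are verified to still satisfy the bound, the periodic extension and the boundary check immediately finish the proof.
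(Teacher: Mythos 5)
Your proposal is correct and follows essentially the same route as the paper's proof: reduce to pairs of indices at distance $1$ or $2$, verify the bound by direct inspection of the initial $45$-term block, check the pairs straddling a block boundary via $x_{44}=57$, $x_{45}=59$, $x_{46}=62$, $x_{47}=61$, and invoke the translation $x_{i+45}=x_i+60$ to handle all later blocks. Your version is somewhat more explicit about which boundary pairs must be checked and where the extremal gaps of $5$ occur, but the argument is the same.
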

\begin{proof}
Based on the structure of $P_n^2$, the neighborhood of a vertex $v_i$ with $i=3,\ldots,n-2$ is the set $\{v_{i-2},v_{i-1},v_{i+1},v_{i+2}\}$.  By inspection of the $45$ labels in the initial segment of the sequence, we can verify that $x_i$ is within $5$ of the label of any adjacent vertex when $i\leq 43$.  

For adjacent vertices with labels that lie in different length $45$ segments, we have $x_{44}=57$, $x_{45}=59$, $x_{46}=62$, and $x_{47}=61$, where the latter two labels are the result of shifting $x_1$ and $x_2$ by~$60$.  We see that $|x_{44}-x_{46}|=5$ is the furthest apart labels for adjacent pairs of these vertices will be, which still satisfies our desired inequality.

Adjacent vertices with indices that are both larger than $45$ will still satisfy $|x_i-x_j|\leq 5$ because $x_i=x_{a}+60m$ and $x_j=x_{b}+60m$ for some integers $1\leq a,b\leq 47$ and positive integer $m$.  Since $x_a$ and $x_b$ satisfy the inequality as shown above, the shifted values $x_i$ and $x_j$ also maintain a distance of $5$ or less, which covers the remaining possible cases of indices $i$ and $j$.
\end{proof}

Note that if we continued to define $x_{46},\ldots,x_{54}$ in the manner of the first $45$ terms by including all even numbers that are not multiples of $3$ or $5$, then we would have $x_{52}=64$ and $x_{54}=71$.  This would have contradicted Lemma~\ref{distanceLemma1} and eventually would result in the sequence containing adjacent labels that are both multiples of $7$.  Shifting each length $45$ segment by $60$ to have $x_{46}=62$ skips the even number $58$ and maintains that the distance between adjacent labels remain small enough to guarantee a coprime labeling, as shown in the following lemma.
 
\begin{lemma}\label{coprimeLabel1}
The labels $\{x_1,\ldots,x_n\}$ are a coprime labeling of $P_n^2$.
\end{lemma}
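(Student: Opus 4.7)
The plan is to verify $\gcd(x_i, x_j) = 1$ for every pair of indices $1 \le i,j \le n$ with $|i-j| \in \{1,2\}$, since these are precisely the adjacencies of $P_n^2$. By Lemma~\ref{distanceLemma1}, $|x_i - x_j| \le 5$, and since the labels are distinct any common prime divisor $p$ of $x_i$ and $x_j$ must divide a nonzero integer of absolute value at most $5$. Thus $p \in \{2,3,5\}$, and it suffices to rule out each of these three common factors separately.

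For $p=2$, I would read the pattern directly off the initial block: the even entries occupy positions $1,4,7,\ldots,43$, i.e., positions congruent to $1 \pmod 3$. A check of the junction $x_{44}=57$, $x_{45}=59$, $x_{46}=62$, $x_{47}=61$ shows that the next even label again lands at a position congruent to $1 \pmod 3$, and since the shift by $60$ preserves parity, the recurrence $x_{i+45} = x_i+60$ propagates this spacing to all of $X$. Consequently any two even labels are at least $3$ positions apart, so no two even labels can ever be adjacent in $P_n^2$.

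For $p=3$, the definition of the initial block deliberately excludes multiples of $3$ from the even entries, and since $60$ is a multiple of $3$ this property is preserved in every shifted block. One need only check positions of odd multiples of $3$: they occur in the initial block at positions $3, 8, 12, 17, 21, 26, 30, 35, 39, 44$, with consecutive gaps alternating between $4$ and $5$; the next odd multiple of $3$, namely $63$, occupies position $48$, leaving a gap of $4$ across the seam. The case $p=5$ is handled identically, using that $60$ is a multiple of $5$ and that odd multiples of $5$ occupy positions $5, 12, 20, 27, 35, 42$, with consecutive gaps of $7$ or $8$.

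The main obstacle is the bookkeeping at the seam between consecutive length-$45$ blocks: the substitution $x_{46}=62$ (rather than the naive $58$) is exactly what keeps the even-position pattern, together with the divisibility patterns modulo $3$ and modulo $5$, aligned with the shift. Verifying the three separation properties once for the window $x_{44},\ldots,x_{48}$ is enough; periodicity then extends the argument to every later block and yields $\gcd(x_i,x_j)=1$ for all adjacent pairs, completing the proof.
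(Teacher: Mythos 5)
Your proposal is correct and follows essentially the same route as the paper: use the distance bound of Lemma~\ref{distanceLemma1} to restrict any common prime factor to $\{2,3,5\}$, then rule each out by observing that even labels sit at positions $\equiv 1 \pmod 3$, that the even labels avoid multiples of $3$ and $5$, and that odd multiples of $3$ (resp.\ $5$) are spaced at least $4$ (resp.\ $7$) positions apart, with the $60$-shift preserving all of this across block seams. You simply make explicit the position bookkeeping that the paper leaves as an assertion, and your stated positions and gaps check out against the sequence.
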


\begin{proof}
By Lemma~\ref{distanceLemma1}, the distance between the labels of adjacent vertices is at most $5$; hence, no two adjacent vertices will have labels with a factor of $6$ or higher in common. By design, even labels are not adjacent, eliminating the possibility of labels sharing a factor of $2$. None of the even labels are divisible by $3$ or $5$, and any two odd labels both divisible by $3$ or $5$ are spaced out enough to avoid being adjacent. Thus our labeling is coprime. 
\end{proof}

\begin{lemma}\label{minimumCoprime}
If $\max{(x_1,\ldots,x_n)}$ is odd, then the labels $\{x_1,\ldots,x_n\}$ are a minimum coprime labeling of $P_n^2$.
\end{lemma}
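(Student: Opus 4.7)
The plan is to sandwich $\pr(P_n^2)$ between the upper bound supplied by the coprime labeling $\{x_1,\ldots,x_n\}$ from Lemma~\ref{coprimeLabel1} and a lower bound coming from the independence number of $P_n^2$. I would first catalog exactly which labels appear in $\{x_1,\ldots,x_n\}$ by reading off the (even, odd, odd)-triple structure of $X$: the $j$-th triple, occupying positions $3j-2,\,3j-1,\,3j$, contributes the odd integers $4j-3$ and $4j-1$ together with the $j$-th term $e_j$ of the increasing list $2,4,8,14,16,22,26,28,32,34,38,44,46,52,56,62,\ldots$ of even numbers not divisible by $3$ or $5$ (the shift by $60$ every fifteen triples is absorbed automatically). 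Hence the label set is $\{1,3,\ldots,4k-1\}\cup\{e_1,\ldots,e_k\}$ when $n=3k$, the same set enlarged by $e_{k+1}$ when $n=3k+1$, and further enlarged by $4k+1$ when $n=3k+2$.

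With these descriptions in hand, the odd-maximum hypothesis completes the upper-bound computation in each residue class $n\bmod 3$. If $n=3k$ the maximum is $\max(4k-1,e_k)$, which is odd by hypothesis and therefore equals $4k-1$. If $n=3k+1$ the hypothesis forces $e_{k+1}\leq 4k-1$, again yielding maximum $4k-1$. If $n=3k+2$ the hypothesis forces $e_{k+1}\leq 4k+1$, so the maximum is $4k+1$. In every case the maximum equals $2\bigl(n-\lceil n/3\rceil\bigr)-1$.

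For the matching lower bound, in any coprime labeling of $P_n^2$ the vertices receiving even labels form an independent set, since any two distinct even labels share the factor $2$. Combined with the formula $\lceil n/3\rceil$ for the independence number of $P_n^2$ noted earlier from Seoud and Youssef, at most $\lceil n/3\rceil$ labels can be even, so at least $n-\lceil n/3\rceil$ must be odd. The $n-\lceil n/3\rceil$ smallest distinct positive odd integers are $1,3,\ldots,2(n-\lceil n/3\rceil)-1$, forcing the largest label of any coprime labeling to be at least $2(n-\lceil n/3\rceil)-1$. The two bounds agree, so $\{x_1,\ldots,x_n\}$ is a minimum coprime labeling.

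The main bookkeeping subtlety is the case $n\equiv 1\pmod 3$: the terminal even label $e_{k+1}$ has no a priori reason to be smaller than the topmost odd label $4k-1$, and indeed for small $k$ it exceeds it. The odd-maximum hypothesis is precisely the assumption that rules that out; the complementary cases, in which $\max(x_1,\ldots,x_n)$ is even, fall outside this lemma and will need a separate argument.
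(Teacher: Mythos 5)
Your proof is correct and follows essentially the same route as the paper's: a lower bound of $2\left(n-\left\lceil n/3\right\rceil\right)-1$ forced by the independence number (even labels must form an independent set, so at least $n-\left\lceil n/3\right\rceil$ labels are odd), matched by the observation that the sequence uses exactly the smallest consecutive odd integers. Your version simply makes explicit the bookkeeping of which labels occur in each residue class of $n\bmod 3$, which the paper's terser argument leaves implicit.
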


\begin{proof}
Notice that even integer labels cannot be 1 or 2 indicies apart, which corresponds to the independence number of our graph being $\displaystyle \left\lceil\frac{n}{3}\right\rceil$. Hence, we have used as few odd labels as possible, while also using all of the odd numbers below $\max(x_1,\ldots,x_n)$. Thus we cannot make $\pr(P_n^2)$ any smaller.
\end{proof}

For the case of the maximum label within $\{x_1,\ldots, x_n\}$ being even, we alter the sequence of labels to achieve a minimum labeling.  Through examination of the first $45$ terms of the sequence and the fact that remaining terms are simply shifted from this initial segment, we see that this situation can only occur when $n=3k+1$ or $3k+2$.  Additionally, we observe that the largest even label in either case is in position $3k+1$.  We create a new sequence $\{x_i^*\}_{i=1}^n$ by defining $x_1^*=10$, $x_{3k+1}^*=2$, and $x_i^*=x_i$ for $i\in\{2,\ldots,n\}\setminus\{3k+1\}$.

\begin{lemma}\label{alteredMinimumCoprime}
If $\max{(x_1,\ldots,x_n)}$ is even, then the labels $\{x_1^*,\ldots,x_n^*\}$ are a minimum coprime labeling of $P_n^2$.
\end{lemma}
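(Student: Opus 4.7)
The plan is to split the claim into two parts. First, I would verify that the alteration at positions $1$ and $3k+1$ preserves coprimality of the labeling; second, I would verify that the maximum label of $\{x_i^*\}$ still meets the lower bound of $4k-1$ (when $n=3k+1$) or $4k+1$ (when $n=3k+2$) forced by the independence-number argument behind Lemma~\ref{minimumCoprime}.

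For the first part, every edge of $P_n^2$ not touching position $1$ or position $3k+1$ has both endpoints labeled as in $\{x_i\}$ and is therefore coprime by Lemma~\ref{coprimeLabel1}. The edges incident to position $1$ run to positions $2$ and $3$, whose labels are still $x_2=1$ and $x_3=3$, both clearly coprime with the new label $10$. The edges incident to position $3k+1$ run to positions $3k-1$, $3k$, and (when $n=3k+2$) $3k+2$; none of these indices has the form $3j+1$, so by the construction of $X$ their labels are odd and hence coprime with the new label $2$.

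For the second part, the odd labels of $\{x_i^*\}$ coincide with those of $\{x_i\}$: there are $2k$ of them when $n=3k+1$ and $2k+1$ when $n=3k+2$, and $X$ places the first $2k$ (respectively $2k+1$) odd integers at these positions, so the largest odd label is $4k-1$ or $4k+1$. It therefore remains to verify that neither the inserted value $10$ nor any retained even $x_4,x_7,\ldots,x_{3k-2}$ exceeds this odd maximum. The hypothesis that $\max(x_1,\ldots,x_n)$ is even forces $k\geq 3$ for $n\geq 8$, so $10\leq 4k-2$. The retained evens are controlled by the fact that the $j$-th even term $e_j$ of $X$ satisfies $e_j\leq 4j-2$; I would confirm this by direct inspection on the fifteen evens $\{2,4,8,14,16,22,26,28,32,34,38,44,46,52,56\}$ in the initial $45$-term block, then extend it to every subsequent block by the shift-by-$60$ rule, which advances the index $j$ by $15$ and the value by exactly $4\cdot 15$. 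The main obstacle is precisely this uniform bound $e_j\leq 4j-2$, since it is what guarantees that introducing the label $10$ at position $1$ cannot spoil minimality; once it is in hand, the maximum of $\{x_i^*\}$ is the largest odd label, matching the lower bound, and the lemma follows.
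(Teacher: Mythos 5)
Your proof is correct and follows essentially the same route as the paper's: check coprimality only at the two altered positions (where the neighbors of $v_1$ carry $1,3$ and the neighbors of $v_{3k+1}$ all carry odd labels), confirm the new maximum is odd, and then invoke the reasoning of Lemma~\ref{minimumCoprime}. The one place you go beyond the paper is in substantiating its ``one can observe that the second largest label will always be odd'' with the explicit bound $e_j\leq 4j-2$ on the even subsequence, propagated across blocks by the shift-by-$60$ rule; that is a worthwhile tightening of an otherwise hand-waved step.
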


\begin{proof}
Note that by inspection of the initial segment of our sequence of labels, the first even value larger than all preceding values of $n$ (assuming $n\geq 8$) would be the $10^{\rm th}$ element in which $x_{10}=14$.  For this case, or for any larger $n$ value, replacing the first and $(3k+1)^{\rm st}$ entries in the sequence with $10$ and~$2$ respectively would result in the maximum of the labels falling into the case of Lemma~\ref{minimumCoprime} since one can observe that the second largest label will always be odd.  

Following the reasoning of the previous lemma, the labeling $\{x_1^*,\ldots,x_n^*\}$ is a minimum coprime labeling as long as we show it is still a coprime labeling.  This sequence matches the original sequence except for two values; hence, the only adjacent vertex pairs that need to be checked as still having relatively prime labels are the vertices $v_1$ and $v_{3k+1}$ with their respective neighborhoods.  In the case of $n=3k+1$, $v_{3k+1}=v_n$ is only adjacent to $v_{n-1}$ and $v_{n-2}$.  Since both labels for these vertices are odd, the label $x_n^*=2$ is relatively prime with its adjacent labels.  Similarly if $n=3k+2$, then $v_{3k+1}=v_{n-1}$ is only adjacent to $v_{n-3}$, $v_{n-2}$, and $v_{n}$.  Again, all of these vertices will be labeled by odd numbers, which are relatively prime to $2$.  Likewise, the label $x_1^*=10$ is only adjacent to the second and third vertices with labels $x_2^*=1$ and $x_3^*=3$, so the labels are once again relatively prime.  Thus, the labeling $\{x_1^*,\ldots,x_n^*\}$ is a minimum coprime labeling.
\end{proof}

\begin{proof}[Proof of Theorem~\ref{pr_path_squared}.]
The previous lemmas in this section have shown that the sequence $\{x_1,\ldots,x_n\}$ or $\{x_1^*,\ldots,x_n^*\}$  provides a minimum coprime labeling of $P_n^2$.  It only remains to show that the maximum value in this sequence, which was shown to be odd, is in fact $4k-1$ or $4k+1$ depending on the value of $n\pmod{3}$.  In the case of $n=3k$, there are $\displaystyle \left\lceil\frac{n}{3}\right\rceil=k$ even labels used, so $\pr(P_n^2)$ is the $(n-k)^{\rm th}$ odd number, which is $2(n-k)-1=4k-1$.  The other two cases follow similarly to give us the value of $\pr(P_n^2)$.
\end{proof}

We next consider the square of the cycle, $C_n^2$, for $n\geq 4$.  Seoud and Youssef~\cite{SY} showed this graph to be not prime when $n\geq 4$ and that its independence number is $\displaystyle \left\lfloor\frac{n}{3}\right\rfloor$, which will be the maximum allowable even labels.
Note the only difference compared to the squared path graph is the additional edges $v_1v_{n-1}, v_1v_n$, and $v_2v_n$.  These do require some alterations to our labeling of~$P_n^2$ in some cases to maintain the coprime property, which results in the following for the minimum coprime number.
See Figure~\ref{c82} for an example of a minimum coprime labeling of $C_8^2$.

\begin{figure}[htb]
\begin{center}
	\includegraphics[scale=.9]{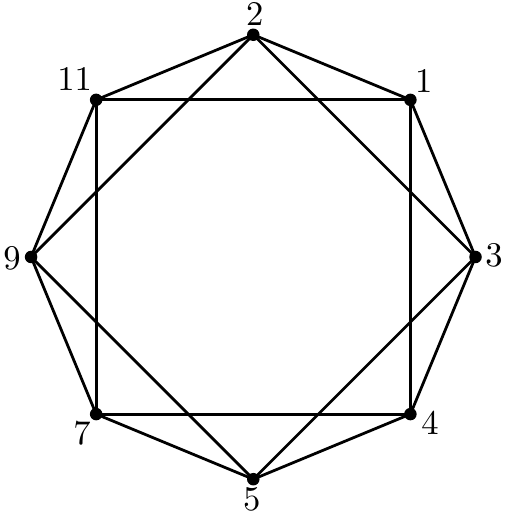}
\end{center}
\caption{Minimum coprime labeling of $C_8^2$}\label{c82}
\end{figure}

\begin{theorem}\label{pr_cycle_squared}
Let $n\geq 4$.  The minimum coprime number of $C_n^2$ is given by 
$$\pr(C_n^2)=\begin{cases}
4k-1 & \text{if } n=3k \\
4k+1 & \text{if } n=3k+1\\
4k+3 & \text{if } n=3k+2\\
\end{cases}$$
\end{theorem}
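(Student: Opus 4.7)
The plan is to mirror the approach of Theorem~\ref{pr_path_squared}, with the main additional work being to handle the three extra edges $v_1v_{n-1}$, $v_1v_n$, $v_2v_n$ that distinguish $C_n^2$ from $P_n^2$. For the lower bound, Seoud and Youssef~\cite{SY} show that the independence number of $C_n^2$ is $\lfloor n/3 \rfloor$, so at most $\lfloor n/3 \rfloor$ labels may be even, forcing at least $n - \lfloor n/3 \rfloor$ odd labels. The largest odd label used is therefore at least the $(n - \lfloor n/3 \rfloor)$-th positive odd integer, which evaluates to $4k-1$, $4k+1$, and $4k+3$ in the three respective cases. The small cases $n \in \{4, 5, 6, 7\}$ would be settled by explicit labelings (noting that $C_4^2 = K_4$ and $C_5^2 = K_5$, whose minimum coprime numbers already follow from Proposition~\ref{complete}).

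For the upper bound, recall from the proof of Theorem~\ref{pr_path_squared} that $x_i$ is even precisely when $i \equiv 1 \pmod 3$, with $x_1 = 2$ and $x_2 = 1$. When $n = 3k$, both $n$ and $n-1$ are incongruent to $1 \pmod 3$, so $x_{n-1}$ and $x_n$ are odd; hence each of the three new edges has an endpoint labeled $1$ or $2$ paired with an odd label, and the $P_n^2$ sequence $\{x_1, \ldots, x_n\}$ itself yields a minimum coprime labeling of $C_n^2$. When $n = 3k+1$ or $n = 3k+2$, the $C_n^2$ independence number is one smaller than that of $P_n^2$, so one fewer even label must be used and one additional odd label (specifically, $4k+1$ or $4k+3$) added; conveniently, this also resolves the wraparound conflict, since the problematic even label at position $n$ (for $n = 3k+1$) or position $n-1$ (for $n = 3k+2$) is adjacent to $x_1 = 2$ in the cycle. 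One would then define a modified sequence $\{y_i\}$ agreeing with $\{x_i\}$ except at this offending position, which is replaced by the new odd value $4k+1$ or $4k+3$.

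The main obstacle is verifying that the inserted odd label is coprime with its remaining cycle-neighbors, which include two nearby odd entries of the sequence within a few positions of the inserted value; the inserted value $4k+1$ or $4k+3$ might share a small odd prime factor (3, 5, or 7) with one of them. For instance, when $n = 47$ the natural choice $4k+3 = 63 = 9 \cdot 7$ clashes with $x_{n-3} = 57 = 3 \cdot 19$. In such sporadic cases one can perform a local swap of two neighboring labels, analogous to the twin-prime adjustment used in the proof of Theorem~\ref{unionCompletePath}, so that the pair sharing a common factor is no longer adjacent. A short case analysis on which small primes can simultaneously divide the inserted label and a nearby sequence entry would show such swaps always suffice; a final check using the distance bound from Lemma~\ref{distanceLemma1} confirms that the non-wraparound edges remain coprime, completing the proof.
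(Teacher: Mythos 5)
Your overall strategy is the paper's: lower bound from the independence number $\lfloor n/3\rfloor$, reuse of the $P_n^2$ sequence verbatim when $n=3k$, and replacement of the offending even label by a new largest odd label when $n=3k+1$ or $3k+2$. The $n=3k$ and $n=3k+1$ cases are handled exactly as in the paper and are fine.

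The $n=3k+2$ case is where you diverge, and your version has a genuine gap. You propose replacing only the even entry at position $n-1$ by the new value and leaving $x_n=4k+1$ in place; as you yourself observe, the inserted label then sits at distance $6$ from the odd label at position $n-3$ (e.g.\ $63$ against $57$ when $n=47$), so a shared factor of $3$ is possible, and you defer to an unspecified ``local swap'' plus ``a short case analysis.'' That repair is not carried out, and it is not obvious it can be done without disturbing other adjacencies (any swap moves an odd label next to different odd labels, and the distance guarantee of Lemma~\ref{distanceLemma1} no longer applies to the perturbed positions). The paper sidesteps the problem entirely by reassigning \emph{both} of the last two positions: $x_{n-1}=4k+1$ and $x_n=4k+3$. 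With that ordering, every neighbor of each reassigned vertex is labeled $1$, $2$, or an odd number within distance $4$ of it, and two odd numbers whose difference is $2$ or $4$ are automatically coprime (a common divisor would have to divide the difference, yet be odd). So no case analysis on small primes is needed at all. If you adopt that placement, your argument closes up; as written, the key verification for $n=3k+2$ is missing. (Your explicit treatment of $4\leq n\leq 7$ via $C_4^2=K_4$ and $C_5^2=K_5$ is a nice touch the paper glosses over.)
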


\begin{proof}
For the case of $n=3k$, the labeling of vertices $v_1,\ldots,v_n$ using the sequence $\{x_1,\ldots,x_n\}$ still provides a minimum coprime labeling as it did with the $P_n^2$.  This is because $x_1=2$ and $x_2=1$ while the final two labels are $x_{n-1}$ and $x_n$ being odd, so the additional three new edges maintain our coprime property.

The other two cases cannot simply use the same labeling as the squared path graph since the independence number is $1$ larger for $C_n^2$.  When $n=3k+1$, the edge $v_1v_n$ would result in a common factor of $2$ between the labels since $x_n$ is even.  Hence, we reassign $x_n=4k+1$, which is the lowest available odd label.  This label will now be relatively prime with its adjacent vertices whose labels are $1$, $2$, $4k-1$, and $4k-3$, making $\{x_1,\ldots,x_n\}$ a minimum coprime labeling.

Similarly, when $n=3k+2$, the edge $(v_1,v_{n-1})$ leads to adjacent vertices with even labels.  We correct this while keeping the labeling minimum by reassigning $x_{n-1}=4k+1$ and $x_n=4k+3$. Notice that $x_{n-1}$ and $x_n$ are the smallest available odd labels.  These are coprime with any adjacent vertices' labels since their neighbors are only labeled $1$, $2$, or an odd number within distance $4$ from themselves.  In each case, we obtain a minimum coprime labeling using the maximum amount of even labels and the smallest possible odd labels with the largest label being $4k-1$, $4k+1$, and $4k+3$ in the respective cases.
\end{proof}

Now we consider taking the third power of the path graph, $P_n^3$, which has additional edges from $v_i$ to $v_{i+3}$ for $i=1,\ldots, n-3$.  Since this graph contains $P_n^2$ as a subgraph, it is also not prime.  One can observe that the independence number for the cubed path is $\displaystyle \left\lceil\frac{n}{4}\right\rceil$, which will determine how many even labels can be placed on its vertices.  Note that when $n\leq 4$, the path cubed is simply a complete graph.

\begin{theorem}\label{pr_path_cubed}
Let $n\geq 5$. The minimum coprime number of $P_n^3$ is given by
$$\pr(P_n^3)=\begin{cases}
6k-1 & \text{if } n=4k\text{ or }4k+1 \\
6k+1 & \text{if } n=4k+2\\
6k+3 & \text{if } n=4k+3\\
\end{cases}$$
\end{theorem}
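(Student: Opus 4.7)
The plan is to mirror the approach used for $P_n^2$ and $C_n^2$: derive the lower bound from the independence number, then construct an explicit coprime labeling that achieves it.

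\textbf{Lower bound.} Two even labels can never appear on adjacent vertices, so the set of vertices carrying even labels is independent. Since $\alpha(P_n^3)=\lceil n/4\rceil$, any coprime labeling uses at most $\lceil n/4\rceil$ even labels, and therefore at least $n-\lceil n/4\rceil$ distinct positive odd integers as labels; the largest of these is at least $2\bigl(n-\lceil n/4\rceil\bigr)-1$. Substituting $n=4k,\,4k+1,\,4k+2,\,4k+3$ recovers $6k-1,\,6k-1,\,6k+1,\,6k+3$.

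\textbf{Construction.} Partition the vertices into blocks $B_j=\{v_{4j+1},v_{4j+2},v_{4j+3},v_{4j+4}\}$, allowing the last block to be truncated. Each complete $B_j$ induces a $K_4$ in $P_n^3$, and $\{v_{4j+1}\}$ is a maximum independent set. On $v_{4j+2},v_{4j+3},v_{4j+4}$ I place the three consecutive odd labels $6j+1,\,6j+3,\,6j+5$; when $n=4k+2$ or $4k+3$ I append $6k+1$ (and in the latter case $6k+3$) to the trailing odd positions. At each independent position $v_{4j+1}$ I place an even label $e_j$. Within-block odd labels are three consecutive odds and hence pairwise coprime. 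Cross-block odd-odd edges have the form $(v_{4j+3},v_{4j+6})$, $(v_{4j+4},v_{4j+6})$, or $(v_{4j+4},v_{4j+7})$, with label differences $4$, $2$, and $4$; each $\gcd$ collapses modulo the small difference to $1$ (e.g.\ $\gcd(6j+5,6j+9)=\gcd(6j+5,4)=1$).

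\textbf{Choice of the evens.} This is the main obstacle. Each $e_j$ must be coprime with its (at most) six adjacent odd labels $N_j=\{6j-5,6j-3,6j-1,6j+1,6j+3,6j+5\}$. Because $N_j$ always contains the two multiples of $3$ ($6j\pm 3$) and at least one multiple of $5$, any admissible $e_j$ must satisfy $\gcd(e_j,30)=2$; additional primes dividing an element of $N_j$ (chiefly $7$ and occasional primes $\geq 11$) impose a few further exclusions. I would define $e_j$ greedily as the smallest positive even integer coprime with $N_j$ that has not yet been assigned. To bound every $e_j$ below the target maximum, I would invoke the density estimate that the evens coprime to $105$ (i.e.\ to $3$, $5$, and $7$) have density $\tfrac{8}{35}$, giving at least $\tfrac{48k}{35}+O(1)$ admissible candidates in $[2,6k]$; this comfortably exceeds the $\lceil n/4\rceil\leq k+1$ picks needed, and the sporadic exclusions imposed on a single $N_j$ by primes $\geq 11$ remove only $O(1)$ candidates from the pool.

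\textbf{Boundary cases and minimality.} When $n=4k+1$ the final independent position $v_{4k+1}$ has only three odd neighbours, so $e_k$ satisfies the weaker constraint of being coprime with $\{6k-5,6k-3,6k-1\}$; when $n=4k+2$ or $4k+3$ the neighbourhood of $e_k$ simply adjoins $6k+1$ (and $6k+3$), and the greedy rule still succeeds. In all four residue classes the labeling uses precisely the first $n-\lceil n/4\rceil$ positive odd integers together with $\lceil n/4\rceil$ small evens, so the maximum label equals the lower bound established above, making the construction a minimum coprime labeling.
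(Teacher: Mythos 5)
Your block decomposition is in fact the same skeleton the paper uses: its sequence $\{y_i\}$ also places an even label at every fourth position and fills the other three positions of each block with the consecutive odd triple $6j+1,\,6j+3,\,6j+5$, and your lower-bound and final counting arguments match the paper's Lemma~\ref{minimumCoprime2} and the proof of Theorem~\ref{pr_path_cubed}. The genuine gap is in your selection of the even labels. The paper's essential device is Lemma~\ref{distanceLemma2}: by interleaving the evens in increasing order (skipping multiples of $3$, $5$, $7$ and a handful of extra evens, and exploiting periodicity mod $210$), it forces every pair of adjacent labels to differ by at most $9$, so the only primes that can ever be a common divisor of adjacent labels are $2$, $3$, $5$, $7$, and coprimality to $105$ settles the evens once and for all. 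Your greedy rule forfeits this distance control, which is exactly why you are forced to worry about primes $\geq 11$ dividing elements of $N_j$ --- a problem the paper never has to face.

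And the density estimate you offer does not close that problem. You need $k+1$ evens from a pool of roughly $\tfrac{48k}{35}\approx 1.37k$ evens in $[2,6k]$ coprime to $105$, a slack of only about $0.37k$. A single prime $p\geq 11$ dividing some element of $N_j$ excludes roughly $1.37k/p$ of those candidates, so if the six odd numbers in one $N_j$ happen to have, say, $11$, $13$, $17$, and $19$ among their prime factors (which nothing prevents), the exclusions alone already total about $0.38k$ --- these are not ``$O(1)$ candidates'' but a positive fraction of the pool, and combined with the up-to-$k$ already-used evens your count can go negative. So the existence of an admissible $e_j$ within the required range is not established. To repair this you should either adopt the paper's route (choose the evens in increasing order so that each even label lies within $9$ of all its odd neighbours, making primes $\geq 11$ irrelevant) or replace the density argument with a genuinely rigorous count, e.g.\ by showing the used evens and the excluded evens overlap enough; as written, the step ``the greedy rule still succeeds'' is asserted rather than proved.
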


Similar to our construction for the coprime labeling of $P_n^2$, we define a sequence of finite length, with the subsequent terms determined by shifting the initial sequence.  For $P_n^3$, our initial sequence consists of 140 entries, and the shift is by 210 from $y_i$ to $y_{i+140}$.  Note that the sequence below is a repetition of even, odd, odd, odd entries with all possible odd numbers included and all even multiples of 3, 5, and 7 excluded.  Additionally, some other even numbers were removed to maintain the inequality in Lemma~\ref{distanceLemma2} regarding the distance between adjacent labels.   We define the labeling sequence as follows where vertex $v_n$ of $P_n^3$ is labeled by $y_n$
\begin{align*}
\{y_1,\ldots,y_{140}\}=\{&2,1,3,5,4,7,9,11,8,13,15,17,16,19,21,23,22,25,27,29,26,31,33,35,32,37,39,41,\\
&38,43,45,47,44,49,51,53,52,55,57,59,58,61,63,65,62,67,69,71,68,73,75,77,74,\\
&79,81,83,82,85,87,89,86,91,93,95,92,97,99,101,104,103,105,107,106,109,111,\\
&113,116,115,117,119,118,121,123,125,122,127,129,131,128,133,135,137,134,\\
&139,141,143,142,145,147,149,146,151,153,155,152,157,159,161,158,163,165,\\
&167,164,169,171,173,172,175,177,179,176,181,183,185,184,187,189,191,188,\\
&193,195,197,194,199,201,203,202,205,207,209\}
\end{align*}
and $y_i=y_{i-140}+210$ for $i> 140$.

We prove this theorem as was done for squared paths using a sequence of lemmas.
\begin{lemma}\label{distanceLemma2}
For adjacent vertices $v_i$ and $v_j$ in $P_n^3$ for some $1\leq i,j\leq n$, the labels in the sequence $\{y_1,\ldots,y_n\}$ satisfy $|y_i-y_j|\leq 9$.
\end{lemma}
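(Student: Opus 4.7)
The plan is to follow the same strategy used for Lemma~\ref{distanceLemma1}, but with the larger neighborhood of $P_n^3$: for $4\leq i\leq n-3$, the neighbors of $v_i$ are $\{v_{i-3},v_{i-2},v_{i-1},v_{i+1},v_{i+2},v_{i+3}\}$, and near the endpoints the neighborhood is a subset of this. So we need to bound $|y_i-y_j|$ whenever $1\leq |i-j|\leq 3$.

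First, I would verify the inequality within the initial segment $\{y_1,\ldots,y_{140}\}$ by direct inspection. Here the sequence consists mostly of blocks of the pattern (even, odd, odd, odd) that are nearly monotonically increasing, so consecutive triples almost always span a range of $4$ or $5$. The only places one has to look carefully are the spots where the pattern was perturbed to avoid violating the bound or to avoid adjacent multiples of $7$: namely, around indices $68$--$72$ (where the entries $101,104,103,105,107$ appear) and around $76$--$80$ (where $113,116,115,117,119$ appear). At each such perturbed site I would list the values $y_{i-3},\ldots,y_{i+3}$ explicitly and observe that the maximum gap within any three-index window is exactly $9$, achieved e.g.\ by $|y_{68}-y_{71}|=|95-104|=9$.

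Second, I would handle the boundary between length-$140$ segments. By the recursive definition, $y_{141}=y_1+210=212$, $y_{142}=y_2+210=211$, $y_{143}=y_3+210=213$, while $y_{138}=205$, $y_{139}=207$, $y_{140}=209$. Any adjacent pair $(v_i,v_j)$ that straddles this boundary has $|i-j|\leq 3$, so both indices lie in $\{138,139,140,141,142,143\}$; I would compute the six resulting differences and verify each is at most $9$ (the largest being $|y_{140}-y_{143}|=|209-213|=4$ and $|y_{138}-y_{141}|=|205-212|=7$).

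Finally, because $y_{i+140}=y_i+210$ for all $i$, for any two indices $i<j$ with $j-i\leq 3$ lying in the same or adjacent shifted segments we can reduce mod the shift: write $i=a+140m$ and $j=b+140m$ (for indices in the same segment) or $j=b+140(m+1)$ (across a boundary), and then $|y_i-y_j|$ equals $|y_a-y_b|$ in the first case and $|y_a+210-y_b-210|=|y_a-y_b|$ in the second (treating the boundary-crossing case by the explicit check above). Thus the bound $|y_i-y_j|\leq 9$ established on the first segment and its boundary carries over to all $i,j$.

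The main obstacle is simply bookkeeping: one must be sure the perturbed entries (the places where consecutive even labels are not just $y_{i-4}+4$) are enumerated completely, since it is at precisely those sites that the bound $9$, rather than the typical $5$, is attained. Once those few windows are tabulated, the rest of the proof is mechanical.
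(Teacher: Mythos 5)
Your approach is the same as the paper's: verify the bound by inspection on the initial $140$-term segment, check the six indices straddling the segment boundary explicitly (the paper also records $y_{138}=205,\ldots,y_{143}=213$ and notes $|y_{138}-y_{141}|=7$), and propagate via the shift $y_{i+140}=y_i+210$. Two details in your write-up are off, though: your witness pair is wrong, since $y_{68}=101$ and $y_{71}=105$ give a difference of $4$ (the values $95$ and $104$ sit at indices $64$ and $69$, which are not adjacent); and the maximum $9$ is \emph{not} attained only at the perturbed sites --- it already occurs in the regular even-odd-odd-odd pattern, e.g.\ the paper's own example $|y_9-y_{12}|=|8-17|=9$ --- so an inspection restricted to the perturbed windows would not actually establish the lemma.
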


\begin{proof}
The power of $P_n^3$ creates a neighborhood for each vertex $v_i$ of $\{v_{i-3},v_{i-2},v_{i-1},v_{i+1},v_{i+2},v_{i+3}\}$ with $i=4,\ldots, n-3$.  Through careful inspection of the $140$ initial labels, the maximum distance between labels of a vertex $v_i$ with $i\leq 137$ and its neighbor is $9$, which is attained for example by vertices $v_9$ and $v_{12}$ having labels 8 and 17, respectively.  

We next examine adjacent vertices with labels with one from the initial sequence and one from the shifted sequence.  Since $y_{138}=205$, $y_{139}=207$, $y_{140}=209$, $y_{141}=212$, $y_{142}=211$, and $y_{143}=213$, the adjacent labels with greatest distance apart are $|y_{138}-y_{141}|=7$, which is within the desired distance.

As in Lemma~\ref{distanceLemma1}, adjacent vertices with indices larger than $140$ maintain the same distance for their labels as their corresponding vertices with indices between $1$ and $143$, making the inequality hold for all adjacent vertices.
\end{proof}

\begin{lemma}\label{coprimeLabel2}
The labels $\{y_1,\ldots,y_n\}$ are a coprime labeling of $P_n^3$.
\end{lemma}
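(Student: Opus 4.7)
The plan is to use Lemma~\ref{distanceLemma2} as the main reduction: since adjacent labels differ by at most $9$, any common factor of two adjacent labels is a prime in $\{2,3,5,7\}$, so it suffices to rule these four primes out case by case.

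First I would verify the placement of even entries. By inspection of the initial segment $\{y_1,\ldots,y_{140}\}$, the even entries appear exactly at positions $1,5,9,13,\ldots$, i.e.\ at positions congruent to $1\pmod{4}$, so the minimum index-distance between two even labels is $4$. Since adjacency in $P_n^3$ means index-distance at most $3$, no two adjacent vertices both receive even labels, and the prime $2$ is excluded. Moreover, by construction all even entries in $\{y_1,\ldots,y_{140}\}$ avoid multiples of $3$, $5$, and $7$, so an even label cannot share a factor of $3$, $5$, or $7$ with anything.

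Next I would handle the odd primes $3$, $5$, $7$ by tracking how sparsely their odd multiples appear. Listing the odd multiples of $3$ in the initial segment (namely $3,9,15,21,27,\ldots$) shows they sit at positions $3,7,11,15,\ldots$, an arithmetic progression of common difference $4$; likewise odd multiples of $5$ sit at positions whose consecutive gaps are at least $6$, and odd multiples of $7$ at positions whose consecutive gaps are at least $9$. In each case the minimum index-gap exceeds $3$, so no two adjacent vertices share a factor of $3$, $5$, or~$7$.

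The only remaining issue is the transition between consecutive length-$140$ blocks, where the labels are shifted by $210$. Here I would verify by direct computation on the six labels $y_{138}=205,\,y_{139}=207,\,y_{140}=209,\,y_{141}=212,\,y_{142}=211,\,y_{143}=213$ that all pairs within index-distance $3$ are coprime; in particular the jump $y_{141}-y_{140}=3$ changes parity correctly and preserves the congruence pattern modulo $3,5,7$ of the periodic scheme (this relies on $210=2\cdot 3\cdot 5\cdot 7$, so shifting by $210$ does not create new divisibility relations). Since the shift is by a multiple of $2\cdot 3\cdot 5\cdot 7$, the coprime verification in the interior of any subsequent block reduces to the verification already done in the first block. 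The mild bookkeeping at the block boundary is the one step that requires care, but it is a finite direct check.
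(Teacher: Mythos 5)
Your proposal is correct and follows essentially the same route as the paper's proof: use Lemma~\ref{distanceLemma2} to restrict possible common prime factors to $\{2,3,5,7\}$, then observe that even labels occupy positions $4$ apart and avoid multiples of $3$, $5$, $7$, and that odd multiples of $3$, $5$, $7$ are spaced at least $4$ indices apart. Your explicit position-counting and the block-boundary check (using that the shift $210=2\cdot3\cdot5\cdot7$ preserves residues) merely make precise the inspections the paper leaves implicit.
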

\begin{proof}
By Lemma~\ref{distanceLemma2}, the distance between adjacent labels is at most $9$; thus, no adjacent labels have a common factor of $10$ or higher.  The construction of the sequence results in even labels being four indices apart and hence not adjacent, so no labels share a common factor of $2$.  The even labels were chosen to not contain a factor of $3$, $5$, or $7$, and any pair of odd labels that both contain a multiple of $3$, $5$, or $7$ have indices that are at least $4$ apart, so they are not adjacent.  Thus, each pair of adjacent labels are coprime.
\end{proof}

\begin{lemma}\label{minimumCoprime2}
If $\max(y_1,\ldots, y_n)$ is odd, then the labels $\{y_1,\ldots,y_n\}$ are a minimum coprime labeling of $P_n^3$.
\end{lemma}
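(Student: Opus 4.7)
The plan is to imitate the proof of Lemma~\ref{minimumCoprime} closely. First I would invoke the fact, noted in the paragraph preceding Theorem~\ref{pr_path_cubed}, that the independence number of $P_n^3$ equals $\lceil n/4\rceil$. Since the set of vertices receiving even labels in any coprime labeling is necessarily independent, any coprime labeling of $P_n^3$ uses at most $\lceil n/4\rceil$ even labels, and therefore at least $n-\lceil n/4\rceil$ distinct positive odd integers. This forces the largest label in any coprime labeling to be at least the $(n-\lceil n/4\rceil)$-th positive odd integer, namely $2(n-\lceil n/4\rceil)-1$, which supplies the lower bound to match.

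Next I would show that the labeling $\{y_1,\ldots,y_n\}$ attains this lower bound. By direct inspection of the 140-term initial block, the even entries sit precisely at the positions $i\equiv 1\pmod 4$, giving $35$ evens and $105$ odds per block; since the shift rule $y_{i+140}=y_i+210$ preserves parity in each position, every initial segment $\{y_1,\ldots,y_n\}$ contains exactly $\lceil n/4\rceil$ even labels and hence $n-\lceil n/4\rceil$ odd labels. A second inspection confirms that the $105$ odd entries in the first block are precisely $1,3,\ldots,209$, and the shift by $210$ contributes the next $105$ consecutive odd integers $211,213,\ldots,419$ in the following block, and so on. Consequently the odd labels actually used by $\{y_1,\ldots,y_n\}$ form an initial segment of the positive odd integers; under the hypothesis that $\max(y_1,\ldots,y_n)$ is odd, this maximum must therefore equal $2(n-\lceil n/4\rceil)-1$. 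Combined with Lemma~\ref{coprimeLabel2}, this makes $\{y_1,\ldots,y_n\}$ a minimum coprime labeling of $P_n^3$.

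The only real obstacle is the bookkeeping on the 140-term block: verifying that exactly $35$ of its entries are even, that these even entries are the permitted multiples of $2$ coprime to $3\cdot 5\cdot 7$ sitting at indices $i\equiv 1\pmod 4$, and that its $105$ odd entries exhaust $\{1,3,\ldots,209\}$ without repetition. This is a routine but careful check parallel to the one implicit for $P_n^2$ in Lemma~\ref{minimumCoprime}, and once it is confirmed, the behavior of the shifted blocks follows automatically from the shift preserving parity and mapping consecutive odd integers to consecutive odd integers.
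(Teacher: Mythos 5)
Your proposal is correct and follows the same route as the paper's (much terser) proof: the independence number $\lceil n/4\rceil$ caps the number of even labels, the sequence attains that cap with evens exactly at positions $i\equiv 1\pmod 4$, and the odd labels used form an initial segment $1,3,\ldots$ of the odd integers, so an odd maximum is forced to equal $2(n-\lceil n/4\rceil)-1$. One immaterial quibble: the even entries are not quite all even numbers coprime to $3\cdot5\cdot7$ (e.g.\ $94$ is skipped to preserve the distance bound of Lemma~\ref{distanceLemma2}), but their exact identity plays no role in the minimality argument, only their count.
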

\begin{proof}
Since the independence number of $P_n^3$ is $\displaystyle \left\lceil\frac{n}{4}\right\rceil$, our sequence of labels uses the maximum number of even labels.  Therefore, the fact that we have used every odd number up to $\max(y_1,\ldots, y_n)$ implies that we have achieved a minimum coprime labeling.
\end{proof}

As with $P_n^2$, we define an altered labeling sequence for the case of $\max(y_1,\ldots, y_n)$ being even to switch out this maximum even label to ensure the maximum is odd.  We create our new sequence $\{y_i^*\}_{i=1}^n$ by defining $y_1^*=14$, $y_m^*=2$, and $y_i^*=y_i$ for all $i\in \{2,\ldots,n\}\setminus\{m\}$,
where $y_m$ was the maximum label of $\{y_1,\ldots,y_n\}$.  It can be observed from the initial sequence $\{y_1,\ldots,y_{140}\}$ that $m=n$ or $n-1$.

\begin{lemma}
If $\max(y_1,\ldots, y_n)$ is even, then the labels $\{y_1^*,\ldots,y_n^*\}$ are a minimum coprime labeling of $P_n^3$.
\end{lemma}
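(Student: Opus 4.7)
The plan is to mirror the strategy of Lemma~\ref{alteredMinimumCoprime} almost verbatim, only adapting it to the larger neighborhood structure of $P_n^3$. First I would confirm that the sequence $\{y_1^*,\ldots,y_n^*\}$ still consists of distinct positive integers: the value $14$ is a multiple of $7$ and so is deliberately absent from every even slot of $\{y_1,\ldots,y_{140}\}$, and its shifts by multiples of $210$ never equal $14$ either, so assigning $y_1^*=14$ introduces no repeat; and because we are relocating the original $y_1=2$ to position $m$, no duplicate is created at position $m$ either.

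Next I would verify the coprime condition, which reduces to checking the edges at the two altered positions $1$ and $m$. The three neighbors of $v_1$ in $P_n^3$ are $v_2, v_3, v_4$, with the untouched labels $y_2=1$, $y_3=3$, and $y_4=5$; since $\gcd(14,1)=\gcd(14,3)=\gcd(14,5)=1$, relabeling $v_1$ as $14$ is safe. For the vertex $v_m$, I would exploit the fact, observable from the sequence, that the even entries sit at positions congruent to $1 \pmod 4$, so whether $m=n$ or $m=n-1$, the (at most four) neighbors of $v_m$ all fall at positions not congruent to $1 \pmod 4$ and therefore carry odd labels, which are automatically coprime with $y_m^*=2$.

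For minimality I would invoke the same accounting used in Lemma~\ref{minimumCoprime2}. The new maximum label is now the second-largest entry of the original sequence, and inspection of the pattern shows it must be odd. Since $\{y_1^*,\ldots,y_n^*\}$ still uses $\lceil n/4\rceil$ even labels, matching the independence number of $P_n^3$, and every odd integer up to this new maximum appears exactly once, no coprime labeling of $P_n^3$ can achieve a smaller largest label.

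The main obstacle is the bookkeeping around the two possibilities $m=n$ and $m=n-1$: one must be confident that in both cases the neighbors of $v_m$ lie entirely at indices not congruent to $1 \pmod 4$, and that the value $14$ truly does not appear elsewhere in the infinite sequence $\{y_i\}$. Both reduce to straightforward case analysis against the explicit sequence and its shift rule $y_i = y_{i-140}+210$, but they must be handled more carefully than in the $P_n^2$ case because $P_n^3$ has a wider neighborhood.
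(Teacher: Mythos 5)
Your proof is correct and follows essentially the same route as the paper's: relabel $v_1$ with $14$ (coprime to its only neighbors' labels $1,3,5$), observe that $v_m$'s neighbors all carry odd labels so the new label $2$ is safe, and then fall back on the counting argument of Lemma~\ref{minimumCoprime2}. Your extra checks (that $14$ does not already occur among the even entries, and the positions-$\equiv 1 \pmod 4$ justification for why $v_m$'s neighbors are odd-labeled in both cases $m=n$ and $m=n-1$) are slightly more explicit than the paper's wording but amount to the same argument.
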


\begin{proof}
Through examination of the sequence $\{y_1,\ldots,y_{140}\}$, we see that the first even value that is larger than all preceding values is at $y_{69}=104$ when $n=69$ or $70$.  Replacing a maximum even label with $2$ and the first label with $14$ will result in there now being an odd label as the maximum.  
This allows us to apply Lemma~\ref{minimumCoprime2} if our labeling remains a coprime labeling.  Reassigning the label of $v_1$ to be $14$ maintains our coprime property since it is only adjacent to labels $1, 3$, and $5$.  The vertex $v_m$, whose maximum label was swapped for the label 2, is only adjacent to vertices with odd labels; hence, the adjacent labels remain relatively prime.
\end{proof}

\begin{proof}[Proof of Theorem~{\normalfont\ref{pr_path_cubed}}.]
The preceding lemmas have proven that $P_n^3$ has a mimimum coprime labeling using either $\{y_1,\ldots,y_n\}$ or $\{y_1^*,\ldots,y_n^*\}$, leaving us to verify that the correct minimum coprime number was attained.  The labels consist of $\displaystyle \left\lceil\frac{n}{4}\right\rceil$ even labels, so we consider the cases of $n \pmod{4}$.  For the case of $n=4k$, $\displaystyle \left\lceil\frac{n}{4}\right\rceil=k$ even labels were used.  Therefore, $\pr(P_n^3)$ is the $(n-k)^{\rm th}$ odd number, which is $2(n-k)-1=6k-1$.  The other three cases follow similarly to find their minimum coprime numbers.
\end{proof}

We next demonstrate a minimum coprime labeling of $C_n^3$, which we note has a independence number of $\displaystyle \left\lfloor\frac{n}{4}\right\rfloor$.  Also observe that $C_n^3=K_n$ for $n\leq 7$.
\begin{theorem}\label{pr_cycle_cubed}
Let $n\geq 8$.  The minimum coprime number of $C_n^3$ is given by 
$$\pr(C_n^3)=\begin{cases}
6k-1 & \text{if } n=4k \\
6k+1 & \text{if } n=4k+1\\
6k+5 & \text{if } n=4k+2\\
6k+7 & \text{if } n=4k+3\\
\end{cases}$$
\end{theorem}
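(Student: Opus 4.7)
The plan is to adapt the minimum coprime labeling of $P_n^3$ from Theorem~\ref{pr_path_cubed}, in a manner analogous to the proof of Theorem~\ref{pr_cycle_squared}, by handling the six extra wrap-around edges present in $C_n^3$ but not in $P_n^3$: namely $v_1v_{n-2}$, $v_1v_{n-1}$, $v_1v_n$, $v_2v_{n-1}$, $v_2v_n$, and $v_3v_n$.

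For the lower bound, the key fact is that the independence number of $C_n^3$ is $\lfloor n/4 \rfloor = k$ in every residue class $n = 4k+j$ with $j \in \{0,1,2,3\}$. This forces both the set of even labels and the set of odd labels divisible by $3$ to have size at most $k$, so at least $3k+j$ odd labels must be used. For $j=0,1$, the smallest $3k+j$ positive odd integers suffice and contain at most $k$ multiples of $3$, giving maxima $6k-1$ and $6k+1$. For $j=2,3$, the smallest $3k+j$ odd integers would contain $k+1$ odd multiples of $3$, namely $3,9,\ldots,6k+3$, which cannot all be placed independently; at least one must be omitted and replaced by the next available odd non-multiple of $3$, forcing the maximum up to $6k+5$ or $6k+7$ respectively.

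For the upper bound, I would construct coprime labelings by case on $n \pmod 4$, starting from the sequence $\{y_i\}$ of Theorem~\ref{pr_path_cubed}. When $n=4k$, the sequence itself is a coprime labeling of $C_n^3$: every new edge involves $v_1, v_2, v_3$ (carrying $y_1=2, y_2=1, y_3=3$) on one side and an odd label on the other, and $\gcd(3, y_n) = \gcd(3, 6k-1) = 1$. When $n=4k+1$, the even labels $y_1$ and $y_{4k+1}$ clash under the wrap-around; reassigning $y_n \gets 6k+1$ resolves this, and the new value is coprime to its neighbors' labels $\{6k-5, 6k-3, 6k-1, 2, 1, 3\}$ by direct gcd computation. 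When $n=4k+2$ or $n=4k+3$, additional reassignments in the tail of the sequence are needed to eliminate one (respectively two) more even labels and insert the new odd values up to $6k+5$ or $6k+7$, with choices made so that the odd multiples of $3$ form an independent set of size exactly $k$.

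The principal obstacle is the upper-bound construction for $n \equiv 2, 3 \pmod 4$: naively setting a replacement label to $6k+5$ can share a factor of $5$ with a nearby $6k-5$ when $k \equiv 0 \pmod 5$, and similar collisions at the prime $7$ can occur for other residues of $k$. I expect to resolve this either by a case split on $k \pmod{35}$ or, more uniformly, by permuting several labels in the final block of positions (analogous to the twin-prime swap used in the proof for $K_n \odot \overline{K}_2$) so that the residues of adjacent labels modulo $3, 5, 7$ never coincide. Once such a construction is fixed, verification reduces to checking the six new adjacencies by direct gcd and invoking Lemma~\ref{distanceLemma2} to bound common factors of the adjacent labels that remain from the original $P_n^3$ sequence.
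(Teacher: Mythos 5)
Your strategy is the paper's: reuse the $P_n^3$ sequence $\{y_i\}$, check the six wrap-around edges, and for $n\equiv 1,2,3\pmod 4$ reassign the tail labels to $6k+1$, then $6k+1,6k+5$, then $6k+1,6k+5,6k+7$, skipping $6k+3$ because of the edge $v_3v_n$. Your lower-bound count (at most $k$ even labels and at most $k$ labels divisible by $3$, hence at least $2k+j$ odd labels coprime to $6$, whose $(2k+j)$-th smallest values are $6k-1,6k+1,6k+5,6k+7$) is correct and in fact stated more explicitly than in the paper. The cases $n=4k$ and $n=4k+1$ are handled exactly as the paper does.

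The gap is that for $n\equiv 2,3\pmod 4$ you never commit to a construction, deferring instead to a hypothetical case split on $k\pmod{35}$ or an unspecified permutation of the final block. No such complication is needed, because the collision you anticipate cannot occur. In the unmodified sequence the last odd labels $6k-7,6k-5,6k-3,6k-1$ occupy positions $4k-4,4k-2,4k-1,4k$. For $n=4k+2$, setting $y_{n-1}=6k+1$ and $y_n=6k+5$ puts $6k+5$ at position $4k+2$, which is at distance $4$ from the vertex labeled $6k-5$ and distance $6$ from the one labeled $6k-7$, so neither pair is adjacent in $C_n^3$; every actual neighbor of $6k+5$ carries a label in $\{6k-3,6k-1,6k+1,1,2,3\}$, each of which differs from $6k+5$ by a power of $2$ times a divisor of $3$ not dividing $6k+5$. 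The same check disposes of $n=4k+3$ with $y_{n-2}=6k+1$, $y_{n-1}=6k+5$, $y_n=6k+7$: the feared partners $6k-5$ and $6k-7$ sit at distances $4$ and $7$ from the vertices receiving $6k+5$ and $6k+7$ respectively. So the straightforward reassignment is a coprime labeling for every $k$, with no residue analysis. One further small omission: when the $P_n^3$ minimum labeling was the starred sequence (maximum even), you must also restore $y_1=2$ before wrapping around, as the paper notes for $n\equiv 1,2\pmod 4$.
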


\begin{proof}
Before considering each case, it is important to note that the vertex $v_n$ is adjacent to $v_3$ in $C_n^3$ since their distance in the cycle graph is $3$.  Observe from our labeling sequence that $y_n$ is a multiple of $3$ if and only if $n=4k+3$.

For $n=4k$, the labeling $\{y_1,\ldots,y_n\}$ that was used for $P_n^3$ is a minimum coprime labeling of $C_n^3$.  The additional edges in $C_n^3$ that were not in $P_n^3$ have endpoints with relatively prime labels because since $y_3=3$ and $y_n$ is not a multiple of 3 as previously above, $y_2=1$, and $y_1=2$ with the three vertices $y_{n-2},y_{n-1},y_n$, all having odd labels.

When $n=4k+1$, the independence number being $\displaystyle \left\lfloor\frac{n}{4}\right\rfloor$ implies that the sequence $\{y_1,\ldots,y_n\}$ cannot be used for the labeling since it would include $k+1$ even labels.  Instead, we reassign $y_n=6k+1$, which is the smallest unused odd label. If $y_1$ is not $2$ then relabel $y_1=2$ as well. Note that this label is $4$ larger than $y_{n-2}=y_{4k-1}$, which is a multiple of $3$, so it is relatively prime with $y_3=3$.  This final label is also clearly relatively prime with the labels $2$ and $1$ of the vertices $v_1$ and $v_2$, in addition to the odd labels at $y_{n-3}$, $y_{n-2}$, and $y_{n-1}$ since it is not a multiple of 3, resulting in a minimum coprime labeling.

Assuming $n=4k+2$, as in the previous case, there are too many even labels that requires a reassignment of the last even label to be $y_{n-1}=6k+1$.  If $y_1$ is not $2$ then relabel $y_1=2$ as well. This label is again relatively prime with $y_1$ and $y_2$.  The last label, $y_n$, which originally was also $6k+1$, cannot be reassigned to be the next smallest odd label of $6k+3$, because this is a multiple of $3$ with $v_n$ being adjacent to $v_3$.  Furthermore, labels that are multiples of $3$ cannot be shifted in any way to accommodate $6k+3$ because of the independence number being $\displaystyle \left\lfloor\frac{n}{4}\right\rfloor=k$ and the fact that there are already $k$ multiples of 3 in our labeling sequence.  Thus, we set $y_n=6k+5$, the smallest possible label that is not even or a multiple of 3.  Since it is coprime with the odd labels at $y_{n-1}$, $y_{n-2}$, and $y_{n-3}$, as well as $y_1$ and $y_2$, we have a minimum coprime labeling. 

We use the same reasoning for the $n=4k+3$ to reassign the labels $y_{n-2}=6k+1$, $y_{n-1}=6k+5$ (to avoid the multiple of $3$), and $y_n=6k+7$.  The labeling is minimum because the independence number limits the number of evens and multiples of 3 that can be used.
\end{proof}

The next logical step would be to generalize our constructions for $P_n^k$ and $C_n^k$ or at least continue with finding a minimum coprime labeling of $P_n^4$.  However, to keep even labels spaced out enough, a sequence for $P_n^4$ would require repetition of the pattern of even, odd, odd, odd, odd.  Using the smallest possible odd numbers fails quickly though as it would begin $2, 1, 3, 5, 7, 4, 9$, resulting in the labels $3$ and $9$ being adjacent due to their vertices being distance 4 apart.  Having to frequently skip odd numbers within our label sequence greatly increases the difficulty of finding minimum coprime labeling of $P_n^k$ for $k\geq 4$, so we leave this as an open problem in Section~\ref{remarks}.

\section{The Join of Paths and Cycles}\label{joins}

In this section we will establish the coprime labeling number for the join of two cycles, two paths, or a cycle and a path. As we did in Section~\ref{powers}, we will use the results on paths to find solutions for the join of cycles. In \cite{SDE}, it was shown that $P_n+K_1=P_n+P_1$ is prime, $P_n+\overline{K}_2$ is prime if and only if $n$ is odd, and $P_n+\overline{K}_m$ is not prime for $m\geq 3$.

We will exploit the size of the gap between prime numbers in order to ensure we can form a minimum coprime labeling. We define the gap between two primes as $g(p_n)=p_{n+1}-p_n$. By the prime number theorem, for all $\varepsilon>0$, there exists an integer $N$ such that for all $n>N$, $g(p_n)<\varepsilon p_n$. More specifically, particular values for $\varepsilon$ and $N$ are mentioned in \cite{R}.

\begin{theorem}\label{gap}
{\normalfont\cite{N,RW,S}} Let $\varepsilon>0$. For any positive integer $n>N$, $g(p_n)<\varepsilon p_n$ where $(N,\varepsilon)\in\{(9,1/5),(118,1/13),(2010760,1/16597)\}$. 
\end{theorem}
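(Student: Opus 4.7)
The plan is to recognize that this theorem is a compilation of three classical results, each an explicit version of Bertrand's postulate, due respectively to Nagura (for $\varepsilon=1/5$, $N=9$), Rohrbach--Weis (for $\varepsilon=1/13$, $N=118$), and Schoenfeld (for $\varepsilon=1/16597$, $N=2010760$). The statement is essentially quoted from those sources, so no original proof is intended; the argument in the paper will be to cite the three references. What follows is a sketch of the common analytic framework underlying all three.

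The strategy in each case proceeds through Chebyshev's function $\theta(x)=\sum_{p\le x}\log p$. One first establishes an explicit two-sided estimate of the form $(1-\delta)x<\theta(x)<(1+\delta)x$ valid for all $x$ exceeding an effectively computable threshold $x_0$, where $\delta$ is chosen small enough relative to $\varepsilon$ that $(1-\delta)(1+\varepsilon)>(1+\delta)$. Combining the upper bound at $x$ with the lower bound at $(1+\varepsilon)x$ then yields $\theta((1+\varepsilon)x)>\theta(x)$, which forces at least one prime to lie in the half-open interval $(x,(1+\varepsilon)x]$. Applying this at $x=p_n$ gives $p_{n+1}-p_n<\varepsilon p_n$, i.e.\ $g(p_n)<\varepsilon p_n$, whenever $p_n>x_0$; this translates into an inequality of the form $n>N_0$ for an explicit $N_0$ determined by $x_0$.

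The hard part is producing the explicit constant $\delta$ with a small enough $x_0$, because this requires quantitative control over the error term in the prime number theorem. Nagura's bound is obtained by comparatively elementary Chebyshev-style manipulations of binomial coefficients, adequate only for the modest choice $\varepsilon=1/5$. Pushing $\varepsilon$ down to $1/13$ and eventually to $1/16597$ demands information about the nontrivial zeros of the Riemann zeta function; Schoenfeld's constant in particular relies on extensive numerical verification of the Riemann hypothesis up to a large height. With the analytic bound in hand, one finishes by verifying the inequality $g(p_n)<\varepsilon p_n$ directly for each of the finitely many indices $n\le N_0$ that are not covered by the asymptotic argument, using a table of primes. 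Combining the two ranges yields the stated values of $(N,\varepsilon)$.
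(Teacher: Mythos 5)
You are right that the paper offers no proof of this theorem: it is stated purely as a citation of Nagura, Rohrbach--Weis, and Schoenfeld, exactly as you identified. Your sketch of the underlying Chebyshev-function argument is accurate background, but the paper's own ``proof'' consists solely of the reference, so your proposal matches its approach.
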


We first investigate a minimum coprime labeling of $P_m+P_2$. We know from \cite{SY} that $P_m+P_2$ is prime when $m$ is odd, so we focus on when $m$ is even. 

\begin{theorem}\label{pm+p2}
Let $m$ be a positive integer. If $m$ is even then $\pr(P_m+P_2)=m+3$ 
\end{theorem}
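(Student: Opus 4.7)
The plan is to prove both $\pr(P_m+P_2)\geq m+3$ and produce a coprime labeling achieving maximum label $m+3$. The case $m=2$ is settled by $P_2+P_2=K_4$ and Proposition~\ref{complete}, which gives $\pr(K_4)=p_3=5=m+3$, so I will assume $m\geq 4$ throughout what follows. For the lower bound, I would invoke \cite{SDE}, where $P_m+\overline{K}_2$ is shown to be non-prime for even $m\geq 4$. Because $P_m+P_2$ is a spanning supergraph of $P_m+\overline{K}_2$ (differing only by the one extra edge inside the second factor), Observation~\ref{primeFromPrime} transfers non-primality to $P_m+P_2$, and since the graph has $m+2$ vertices this gives $\pr(P_m+P_2)\geq m+3$.

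For the upper bound, the strategy is to place $1$ and a suitable prime $p$ on the two $P_2$ vertices and then list $m$ carefully-chosen labels along $P_m$. Applying Bertrand's postulate with $n=m/2+1$ produces a prime $p$ with $m/2+1<p<m+2$; since $m+2\geq 6$ is even and composite, we in fact have $p\leq m+1$, and $p>(m+3)/2$ gives $2p>m+3$. Assigning $1$ to one vertex of $P_2$ and $p$ to the other handles every join edge and the $P_2$-edge automatically: $1$ is coprime with everything, and $p$ is the unique multiple of itself in $\{1,\ldots,m+3\}$, so it is coprime with every label eventually placed on $P_m$.

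The main obstacle is arranging $m$ of the remaining $m+1$ labels on the path $P_m$ so that adjacent labels are coprime. Listing them as $2,3,\ldots,p-1,p+1,\ldots,m+3$ fails because $p-1$ and $p+1$ are both even. I would repair this by deleting one additional label based on $p\pmod 3$: if $p\equiv 1\pmod 3$, delete $p-1$, creating the juncture $(p-2,p+1)$ whose $\gcd$ divides $3$ while $p-2\equiv 2\pmod 3$, forcing $\gcd=1$; if $p\equiv 2\pmod 3$, delete $p+1$ instead, creating the juncture $(p-1,p+2)$ whose $\gcd$ divides $3$ while $p-1\equiv 1\pmod 3$, again forcing $\gcd=1$. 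The case $p\equiv 0\pmod 3$ would require $p=3$, which is excluded since $p\geq 5$ for $m\geq 4$. All remaining adjacencies on $P_m$ are consecutive integers, hence coprime, and the deleted labels $p-1,p+1$ lie in $\{2,\ldots,m+3\}$ since $p\geq 5$ and $p+1\leq m+2$. I expect this mod-$3$ dichotomy to be the only real obstacle; the rest is a routine check that the sequence has the required length $m$ and that its largest entry is $m+3$.
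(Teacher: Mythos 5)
Your proposal is correct, and it takes a genuinely different route from the paper's. Both arguments get the lower bound $\pr(P_m+P_2)\geq m+3$ the same way (non-primality of $P_m+\overline{K}_2$ plus Observation~\ref{primeFromPrime}), but the constructions diverge. The paper handles $m=2,4,6,8$ by explicit pictures and, for even $m\geq 10$, labels the path with a sequence $2,m+3,4,3,8,5,6,9,10,p_4,12,p_5,\ldots,m+1$ in which each prime $p_i$ is displaced to position $p_{i+1}-1$, relying on the prime-gap estimate of Theorem~\ref{gap} ($p_{i+1}<1.2\,p_i$) to verify coprimality, and puts $1$ and the largest prime $p'\leq m+1$ on $P_2$. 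You instead use Bertrand's postulate once to find a prime $p\in(m/2+1,\,m+2)$, place $1$ and $p$ on $P_2$ (so $2p>m+3$ kills all join edges at a stroke), and label the path with the consecutive run $2,\ldots,m+3$ minus $p$ and minus one of $p\pm 1$ chosen by the residue of $p$ modulo $3$, leaving a single juncture of difference $3$ whose gcd is checked directly. Your version buys uniformity (no separate small cases beyond $m=2$), independence from the sharper prime-gap results, and an essentially one-line coprimality verification; the paper's displaced-primes template, while heavier here, is the one that scales to the later theorems on $P_m+P_n$ for larger $n$, where several large primes must be accommodated rather than just one. Your counts all check out: the path receives exactly $m$ labels, the maximum label is $m+3$ (the deleted label $p+1\leq m+2$ never equals $m+3$ since $m+2$ is even), and $p\geq 5$ rules out the residue $0$ case.
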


\begin{proof}
Since it was shown in \cite{SY} that $P_m+\overline{K}_2$ is not prime when $m$ is even, by Observation~\ref{primeFromPrime}, $P_m+P_2$ is not prime. 
Now we must find $\pr(P_m+P_2)$. 
When $m=2$, $P_m+P_2=K_4$, and so $\pr(P_2+P_2)=5$ by Proposition~\ref{complete}. It can be seen in Figure~\ref{pmp3} that $\pr(P_4+P_2)=7$, $\pr(P_6+P_2)=9$, and $\pr(P_8+P_2)=11$.

Suppose that $m\geq 10$ is even. We label the vertices of $V(P_m)=\{v_1,\ldots,v_m\}$ using the sequence
\[
2,m+3,4,3,8,5,6,9,10,p_4,12,p_5,14,15,16,p_6,18\ldots,m+1.
\]
Notice that $p_i$ is the label of $v_{p_{i+1}-1}$ for $i\in\{4,5,\ldots\}$.
By Theorem~\ref{gap}, $m$ is large enough so that $p_{i+1}<1.2 p_{i}$. Since the smallest number that is divisible by $p_{i}$ that is not $p_{i}$ is $2p_{i}$,  $p_{i}$ is relatively prime with the labels adjacent to $v_{p_{i+1}-1}$ for all $i\in\{4,5,\ldots\}$. Let $p'$ be the largest prime in the sequence $2,3,4,\ldots,m+1$. 
Then we label the vertices in $V(P_2)=\{u_0,u_1\}$ using the labels $1$ and $p'$. It is clear that this is a coprime labeling. Since we use all odd numbers less than the largest odd number label, it is also a minimum coprime labeling. 
\end{proof}

See Figure~\ref{pmp3} for an example of the labeling in the following theorem.

\begin{figure}[htb]
\begin{center}
	\includegraphics[scale=1.1]{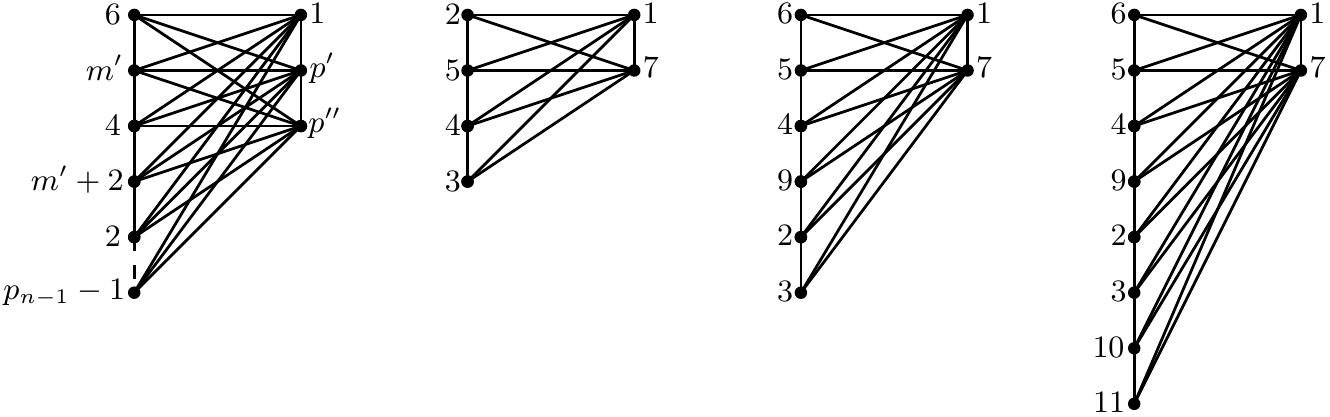}
\end{center}
\caption{Minimum coprime labeling of $P_m+P_3$ (left), $P_4+P_2$ (middle left), $P_6+P_2$ (middle right), and $P_8+P_2$ (right)}\label{pmp3}
\end{figure}

\begin{theorem}\label{pm+p3}
Let $m$ be a positive integer. Then $\pr(P_{m}+ P_3)= m+4+\frac{1-(-1)^m}{2}$.
\end{theorem}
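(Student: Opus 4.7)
The plan is to combine an independence-number lower bound with an extension of the construction from Theorem~\ref{pm+p2} by one extra vertex on the $P_3$ side of the join.

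For the lower bound, I would first observe that $P_m+\overline{K}_3$ is a spanning subgraph of $P_m+P_3$, and that Seoud, Diab, and Elsahawi~\cite{SDE} proved $P_m+\overline{K}_s$ is not prime whenever $s\geq 3$. Combining this with Observation~\ref{primeFromPrime} yields $\pr(P_m+P_3)\geq m+4$ immediately. To capture the parity-dependent term, I would use an independence-number count: in any join every independent set lies inside one of the two factors, so $\alpha(P_m+P_3)=\lceil m/2\rceil$ once $m\geq 4$. A coprime labeling with largest label $L$ uses $m+3$ of the integers in $\{1,\dots,L\}$, and in any such choice at most $L-m-3$ of the $\lfloor L/2\rfloor$ available even integers may be omitted; requiring the remaining evens to fit inside an independent set of size $\lceil m/2\rceil$ and splitting on the parity of $m$ produces the claimed lower bound.

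For the upper bound, my plan is to adapt the labeling sequence from Theorem~\ref{pm+p2}. I would place $1$ on the middle vertex $u_2$ of $P_3$, so that both $u_1u_2$ and $u_2u_3$ are automatically coprime; on $u_1$ and $u_3$ I would put two primes $p', p''$ each larger than half of the intended maximum label, so their only multiple at or below that maximum is themselves, making every cross-edge $u_iv_j$ automatically coprime with whatever sits on $P_m$. The path $P_m$ itself would then carry the sequence $2, m+3, 4, 3, 8, 5, 6, 9, 10, p_4, 12,\dots$ from the proof of Theorem~\ref{pm+p2}, after removing the three labels $1, p', p''$; in the parity case forcing one extra slot, one additional even label is removed so that the number of evens on the independent set $\{v_1, v_3,\dots\}$ matches $\lceil m/2\rceil$. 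Consecutive edges on $P_m$ remain coprime by the same ``prime $p_i$ placed at position $p_{i+1}-1$'' analysis used in Theorem~\ref{pm+p2}, with Theorem~\ref{gap} controlling the prime gaps.

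The main obstacle is twofold. First, I must guarantee two primes in the narrow window $(L/2, L]$ for every $m$ above some small threshold: Theorem~\ref{gap} handles this asymptotically, but for the intermediate regime an explicit case analysis together with the small-case labelings in Figure~\ref{pmp3} will be needed, and if the window ever contains only one prime then an alternative trick such as replacing a prime by a prime square (in the spirit of Theorem~\ref{unionCompleteStar}) would be required. Second, when the parity argument forces an extra even label to be dropped from the Theorem~\ref{pm+p2} sequence, choosing which even to drop is delicate: it must not be one of the sensitive positions $p_{i+1}-1$ that holds a prime $p_i$, since destroying that structure can create two consecutive odd labels sharing a common odd prime factor.
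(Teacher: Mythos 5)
Your proposal is essentially the paper's own approach: the paper also labels the $P_3$ side with $1$ and two large primes $p',p''$ (the two largest primes in $\{2,\dots,m+1\}$), puts a rearranged near-consecutive sequence with two extra odd labels above $m+1$ on $P_m$ with each prime $p_i$ displaced to a position where Theorem~\ref{gap} guarantees its neighbors are less than $2p_i$, handles $m\le 11$ by explicit sequences, and argues minimality from having used every odd number below the maximum label; your lower-bound paragraph is just a more carefully spelled-out version of the paper's one-line minimality claim. One caveat worth flagging: if you actually carry out your independence-number count ($\alpha(P_m+P_3)=\lceil m/2\rceil$, with $m+3$ labels drawn from $\{1,\dots,L\}$) you get $L\ge m+5$ for \emph{even} $m$ and $L\ge m+4$ for \emph{odd} $m$, i.e.\ $m+4+\frac{1+(-1)^m}{2}$, which is what the paper's construction and its small cases ($\pr(P_4+P_3)=9$, $\pr(P_6+P_3)=11$, $\pr(P_5+P_3)=9$) actually realize --- so your computation does not ``produce the claimed lower bound'' as literally stated; the parity term in the theorem statement appears to have the wrong sign, and your argument (like the paper's) proves the corrected version.
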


\begin{proof}
Let $V(P_m)=\{v_1,\ldots,v_m\}$.
It is clear that $\pr(P_1+P_3)=4$. By Proposition~\ref{complete}, $\pr(P_2+P_3)=7$. By letting the sequence $2,3,4,9,8,11$ be the labels for $v_1,\ldots,v_6$ respectively and labeling the remaining vertices in $P_3$ with $1,5,7$, $\pr(P_3+P_3)=7$, $\pr(P_4+P_3)=\pr(P_5+P_3)=9$, and $\pr(P_6+P_3)=11$. 
By letting the sequence $6,5,4,3,2,9,8,13,10$ be the labels for $v_1,\ldots,v_9$ respectively and labeling the remaining vertices in $P_3$ with $1,7,11$, $\pr(P_7+P_3)=11$, $\pr(P_8+P_3)=\pr(P_9+P_3)=13$.
By letting the sequence $6,5,4,3,2,9,10,7,8,15,14$ be the labels for $v_1,\ldots,v_{11}$ respectively and labeling the remaining vertices in $P_3$ with $1,11,13$, $\pr(P_{10}+P_3)=\pr(P_{11}+P_3)=15$.

Suppose that $m\geq 12$. We label the vertices of $V(P_m)=\{v_1,\ldots,v_m\}$ using the sequence
\[
2,m',6,5,12,7,4,m'+2,8,9,10,3,14,15,16,p_5,18,p_6,20,21,22,p_7,\ldots,m+1
\]
where $m'$ is the smallest odd number larger than $m+1$. Depending on whether $m+1$ is even or odd will determine whether $m'$ is $1$ or $2$ larger than $m+1$. Notice that $v_{p_7-1}$ is labeled $p_5$, $v_{p_8-1}$ is labeled $p_6$, $v_{p_9-1}$ is labeled $p_7$, and so on. 
Note that if $3\mid m'$, swap $m'$ and $m'+2$ within the sequence.
By Theorem~\ref{gap}, $m$ is large enough so that $p_i<1.44 p_{i-2}$, thus $p_{i-2}$ is relatively prime with the labels on either side of $v_{p_i-1}$ for all $i\in\{7,8,\ldots\}$. Let $p'$ and $p''$ be the two largest primes in the sequence $2,3,4,\ldots,m+1$. 
Then we label the vertices in $V(P_3)=\{u_0,u_1,u_2\}$ using the sequence $1,p',p''$. It is clear that this sequence is a coprime labeling. 
Since we used all of the odd numbers less than our largest odd prime label when labeling $P_m$, it is also a minimum coprime labeling. 
\end{proof}

\begin{theorem}\label{pmp4}
Let $m$ be a positive integer. Then $\pr(P_m+ P_4)= m+6+\frac{1-(-1)^m}{2}$.
\end{theorem}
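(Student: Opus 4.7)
The plan is to mirror the proof of Theorem~\ref{pm+p3} one level higher, now storing an additional large prime on the $P_4$ side of the join. The argument has three pieces: a counting lower bound, an explicit labeling for large $m$, and small-case checks.

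For the lower bound, note that $\alpha(P_m + P_4) = \max(\alpha(P_m), \alpha(P_4)) = \lceil m/2 \rceil$ for $m \geq 4$, since in any join the independent sets live entirely in one part. The vertices receiving even labels must form an independent set, so at most $\lceil m/2 \rceil$ of the $m+4$ labels are even, forcing at least $\lfloor m/2 \rfloor + 4$ odd labels. Since $\{1, \ldots, M\}$ contains $\lceil M/2 \rceil$ odd integers, $\lceil M/2 \rceil \geq \lfloor m/2 \rfloor + 4$, and solving this in each parity class of $m$ produces the required bound on $M = \pr(P_m + P_4)$.

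For the upper bound, small $m$ (say $m \leq 11$) are handled by explicit tables in the style of Theorems~\ref{pm+p2} and~\ref{pm+p3}. For $m \geq 12$, I label $V(P_m) = \{v_1, \ldots, v_m\}$ using a sequence adapted from the one in Theorem~\ref{pm+p3}, namely
\[
2,\, m',\, 6,\, 5,\, 12,\, 7,\, 4,\, m'{+}2,\, 8,\, 9,\, 10,\, 3,\, m'{+}4,\, 14,\, 15,\, 16,\, p_6,\, 18,\, p_7,\, 20,\, 21,\, 22,\, p_8,\, \ldots,\, m+1,
\]
where $m'$ is the smallest odd integer exceeding $m+1$, and the three shifted odd labels $m',\, m'+2,\, m'+4$ absorb the one extra vertex relative to the $P_m + P_3$ case. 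Each prime $p_i$ is embedded at position $v_{p_{i+2}-1}$ so that Theorem~\ref{gap} with $\varepsilon = 1/5$ gives $p_{i+2} < 1.44\, p_i$, making $p_i$ coprime with its two neighbors; I swap $m',\, m'+2,\, m'+4$ among themselves when $3$ or $5$ would otherwise divide two adjacent labels, exactly as in Theorem~\ref{pm+p3}. The vertices of $P_4$ are then labeled $p',\, 1,\, p'',\, p'''$, where $p' > p'' > p'''$ are the three largest primes that do not already appear on $P_m$ and are at most the target value of $M$; the three edges of $P_4$ are automatically coprime because $1$ is adjacent to two distinct primes and the remaining edge joins two distinct primes.

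The main obstacle is verifying that each of $p',\, p'',\, p'''$ is coprime with every label used on $P_m$, since a join edge connects every $P_4$-vertex to every $P_m$-vertex. Applying Theorem~\ref{gap} iteratively, for $m$ sufficiently large there are at least three primes greater than $M/2$ and at most $M$, so none of $p',\, p'',\, p'''$ has any multiple below $M$ other than itself, and that multiple is assigned to $P_4$ rather than $P_m$; the finitely many cases where the prime-gap bound is not yet sharp enough fall within the small-$m$ explicit check. Finally the largest label produced by the construction equals $m'+4$, which reduces to $m + 6 + \frac{1-(-1)^m}{2}$ after $m'$ is evaluated in each parity class, and combined with the matching lower bound this completes the proof.
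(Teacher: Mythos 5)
Your overall strategy matches the paper's: explicit labelings for small $m$, and for $m$ large a sequence on $P_m$ in which the primes are shifted down in index so that the largest primes below $m+1$ can be moved to $P_4$ alongside the label $1$, with Theorem~\ref{gap} guaranteeing that a shifted prime stays coprime to its two path-neighbours. Your explicit independence-number lower bound is actually more than the paper supplies (it only remarks that all odd numbers below the largest label were used), and your check that the three $P_4$-primes exceed $M/2$ and hence have no multiples among the $P_m$ labels makes explicit a point the paper leaves unstated.

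There are, however, two concrete gaps. First, the arithmetic you assert "produces the required bound" does not. Since $\alpha(P_m+P_4)=\lceil m/2\rceil$, your inequality $\lceil M/2\rceil\geq\lfloor m/2\rfloor+4$ gives $M\geq m+7$ for even $m$ and $M\geq m+6$ for odd $m$; likewise $m'$, the smallest odd integer exceeding $m+1$, equals $m+3$ for even $m$ and $m+2$ for odd $m$, so $m'+4$ equals $m+7$ for even $m$ and $m+6$ for odd $m$. Both computations therefore yield $m+6+\frac{1+(-1)^m}{2}$, the parity-reverse of the claimed $m+6+\frac{1-(-1)^m}{2}$. A sanity check confirms this: $P_4+P_4$ has $8$ vertices and independence number $2$, so at least $6$ labels must be odd and $\pr(P_4+P_4)\geq 11$, whereas the stated formula gives $10$. (The paper's own construction exhibits the same mismatch with its stated formula, so your computation has in effect uncovered an error in the statement; but a proof that claims these reductions yield the stated value has a hole precisely where you skip the case-by-parity evaluation.)

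Second, your prime shift is by two index positions ($p_i$ placed at $v_{p_{i+2}-1}$, justified by $p_{i+2}<1.44\,p_i$), which frees only the two largest primes below $m+1$ for use on $P_4$. Since you place three primes on $P_4$, you need the shift by three that the paper uses, with the bound $p_{i+3}<(6/5)^3p_i=1.728\,p_i<2p_i$; otherwise your third prime either already occurs as a label on $P_m$ or must be taken above $m+1$, where its existence below the target bound $m'+4$ is not guaranteed by any prime-gap estimate. A smaller quibble: by inserting $m'+4$ immediately before the label $14$, your rearrangement rule for $m',m'+2,m'+4$ must also dispose of a possible common factor of $7$, not only $3$ and $5$.
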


\begin{proof}
It is clear that $\pr(P_1+P_4)=5$. By Theorems~\ref{pm+p2} and \ref{pm+p3}, $\pr(P_2+P_4)=7$ and $\pr(P_3+P_4)=9$. 
By letting the sequence $2,3,4,9,8,13,6$ be the labels for $v_1,\ldots,v_7$ respectively and labeling the remaining vertices with $1,5,7,11$, $\pr(P_m+P_4)=m+6+\frac{1-(-1)^m}{2}$ for $m\in\{4,5,6,7\}$. 
By letting the sequence $2,5,4,3,10,9,8,5,6,17,16,19,18$ be the labels for $v_1,\ldots,v_{13}$ respectively and labeling the remaining vertices with $1,7,11,13$, $\pr(P_m+P_4)=m+6+\frac{1-(-1)^m}{2}$ for $m\in\{8,9,\ldots,13\}$.

Suppose that $m\geq 14$. We label the vertices of $V(P_m)=\{v_1,\ldots,v_m\}$ using the sequence
\[
6,m',8,m'+2,2,3,4,m'+4,10,9,14,5,12,15,16,p_4,18,p_5,20,21,\ldots
\]
where $m'$ is the smallest odd number larger than $m+1$. 
Whether $m+1$ is even or odd will determine if $m'$ is $1$ or $2$ larger than $m+1$. 
Notice that $v_{p_8-1}$ is labeled $p_5$, $v_{p_9-1}$ is labeled $p_6$, and so on. This pattern continues until $m+1$. 
Note that at most two of $m'$, $m'+2$, and $m'+4$ can be divisible by $3$ or~$5$, or at most one being divisible by both $3$ and $5$.  In each case, these three labels can be rearranged to maintain the relatively prime condition with their adjacent labels.

By Theorem~\ref{gap}, $m$ is large enough so that $p_i<1.728 p_{i-3}$ and so $p_{i-3}$ is relatively prime with the numbers on either side of the position of $p_i$ for all $i\in\{7,8,\ldots\}$. Let $q_1$, $q_2$, $q_3$ be the three largest primes in the sequence $2,3,4,\ldots,m+1$. 
Then we label the vertices in $P_4=u_0,u_1,u_2,u_3$ using the sequence $1,q_1,q_2,q_3$. It is clear that this sequence is a coprime labeling. 
Since we used all of the odd numbers less than our largest odd prime label when labeling $P_m$, it is also a minimum coprime labeling. 
\end{proof}

It is likely the case that $\pr(P_m+P_n)$ satisfies the equality below for $n,m\geq 5$, but the task of completing these cases is better left to a computer.

\begin{theorem}\label{pmpn<10}
Let $m>118$ and $n\leq 10$ be positive integers. Then $\pr(P_m+ P_n)= m+2n-2+\frac{1-(-1)^m}{2}$.
\end{theorem}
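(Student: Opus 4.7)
The plan is to extend the constructions in Theorems~\ref{pm+p2}, \ref{pm+p3}, and~\ref{pmp4} to all $n\leq 10$ by isolating a block of ``large'' primes on $V(P_n)$ and doing the delicate sequence design on $V(P_m)$, matched by an independence-number lower bound. Write $M:=m+2n-2+\frac{1-(-1)^m}{2}$ for the claimed value.

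For the lower bound, observe that every vertex of $P_m$ is joined to every vertex of $P_n$, so any independent set of $P_m+P_n$ lies in a single summand and $\alpha(P_m+P_n)=\max(\alpha(P_m),\alpha(P_n))=\lceil m/2\rceil$ for our range of $m$ and $n$. The vertices receiving even labels form an independent set, so at most $\lceil m/2\rceil$ of the $m+n$ labels can be even, and at least $m+n-\lceil m/2\rceil$ must be odd. Comparing with the number $\lceil M/2\rceil$ of odd integers in $\{1,\ldots,M\}$ and tracking the parity of $m$ yields $\pr(P_m+P_n)\geq M$.

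For the upper bound, I would label $V(P_n)=\{u_0,\ldots,u_{n-1}\}$ with $1$ placed at an endpoint and the $n-1$ largest primes $q_1<\cdots<q_{n-1}$ of $[2,M]$ placed on the remaining vertices in any order. A prime-counting estimate combined with Theorem~\ref{gap} (which applies since $m>118$) guarantees $q_1>M/2$, so each $q_j$ satisfies $2q_j>M$ and has no other multiples in $\{1,\ldots,M\}$; hence every $q_j$ is automatically coprime to every label subsequently placed on $V(P_m)$, and the internal coprimality along $P_n$ is supplied by $1$ at the endpoint. It remains to label $V(P_m)$ with $m$ integers drawn from $\{2,\ldots,M\}\setminus\{q_1,\ldots,q_{n-1}\}$, leaving $n-2$ or $n-1$ surplus even labels (depending on the parity of $m$) unused, so that consecutive labels along the path are coprime. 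Following the template of the earlier theorems, the sequence along $P_m$ would begin with an explicit seed of length $O(n)$ placing the small labels and the first few primes, and then periodically insert each small prime $p_k$ at vertex $v_{p_{k+c}-1}$ for a fixed offset $c=c(n)\leq n-1$, so that the two labels flanking $p_k$ are consecutive integers near $p_{k+c}$; the bound $p_{k+c}<(14/13)^{c} p_k<2p_k$ from Theorem~\ref{gap} then ensures those flanking integers are relatively prime to $p_k$.

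The principal obstacle is the seed. The local transpositions required to break apart adjacent multiples of $3$, $5$, or $7$ (of the same kind already visible in the proofs of Theorems~\ref{pm+p2}--\ref{pmp4}), together with occasional ad hoc rearrangements around twin primes, proliferate rapidly as $n$ grows, and a uniform description valid for every $n\leq 10$ and every residue of $m$ modulo small primes appears to demand either a compact algorithmic recipe for the seed-and-insertion routine or the computer-aided verification the authors allude to at the end of the section.
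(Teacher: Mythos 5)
Your overall strategy---the $n-1$ largest primes on $V(P_n)$, made harmless by a prime-gap estimate so that $2q_j$ exceeds the largest label; a shifted placement of the small primes along $V(P_m)$ so that each sits between consecutive integers roughly twice as large; and an independence-number count of odd labels for minimality---is exactly the paper's strategy. But the proposal has two genuine gaps. The first is that the upper bound is never actually established. For $5\leq n\leq 10$ the entire content of the paper's proof is the explicit seed: a concrete initial segment of roughly one hundred labels for $v_1,v_2,\ldots$, verifiable term by term, after which the periodic prime-shifting rule takes over. You correctly identify that all of the difficulty lives in this seed, and then stop, writing that producing it ``appears to demand either a compact algorithmic recipe \ldots or the computer-aided verification the authors allude to.'' Deferring precisely the step that constitutes the proof leaves the upper bound asserted rather than proved. (Your observation that the largest primes $q_1<\cdots<q_{n-1}$ of the label range satisfy $2q_j>M$, hence need no further checking, is correct and matches the paper.)

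The second gap is that the lower bound does not close. An independent set of the join lies in a single summand, so at most $\lceil m/2\rceil$ labels are even and at least $\lfloor m/2\rfloor+n$ are odd, forcing $\pr(P_m+P_n)\geq 2\left(\lfloor m/2\rfloor+n\right)-1$. This equals $m+2n-1$ for $m$ even and $m+2n-2$ for $m$ odd, i.e.\ $M+1$ in the even case and only $M-1$ in the odd case, where $M=m+2n-2+\frac{1-(-1)^m}{2}$. So ``tracking the parity of $m$ yields $\pr(P_m+P_n)\geq M$'' is not what the count gives: in the even case it would contradict the claimed value outright, and in the odd case it falls one short of it. (The computation, which is consistent with Theorem~\ref{pm+p2} giving $m+3=m+2\cdot 2-1$ for even $m$ and with the explicit constructions in Theorems~\ref{pm+p3} and~\ref{pmp4}, indicates the parity correction in the statement should read $\frac{1+(-1)^m}{2}$; the paper's own one-sentence minimality claim glosses over the same issue. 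Either way, your lower bound as written does not establish the stated value, and for odd $m$ an additional argument would be needed to exclude the value one below the claimed one.)
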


\begin{proof}
By Theorems~\ref{pm+p2}, \ref{pm+p3}, and \ref{pmp4}, let $5\leq n\leq 10$. We label the vertices of $P_m=v_0,v_1,\ldots,v_m$ using the sequence
\begin{align*}
2,&x_1,4,x_2,8,x_3,16,x_4,32,x_5,64,x_6,10,x_7,20,x_8,40,x_9,50,7,6,5,12,31,18,37,24,17,30,19,36,11,\\
42,&25,48,35,54,49,60,13,66,65,72,23,78,29,70,3,14,9,28,15,56,39,22,21,26,27,34,33,38,45,44,\\
51,&46,55,52,57,58,63,62,75,68,77,74,69,76,p_{13},80,81,82,p_{12},84,85,86,87,88,p_{14},89,90,91,92,93,\\
94,&95,96,p_{15},98,99,100,p_{16},102,p_{17},104,\ldots
\end{align*}
where $x_1,x_2,\ldots,x_9$ are the $n-1$ smallest odd integers larger than $m+1$. 
Depending on whether $m+1$ is even or odd will determine whether $\min(\{v_1,\ldots,v_n\})$ is $1$ or $2$ larger than $m+1$. 
Notice that at most two of the integers in the sequence $x_1,\ldots,x_9$ are divisible by $5$. So there are $4$ integers not divisible by $5$ that we will use to label the vertices $v_{12},v_{14},v_{16},v_{18}$.

If $n<10$ then $10-n$ of the labels in $\{x_1,x_2,\ldots,x_9\}$ will use $10-n$ of the smallest primes in $\{p_{14},p_{15},\ldots,p_{22}\}$ and the placement of the primes will shift by $10-n$ prime positions lower than they currently are in the sequence above. By Theorem~\ref{gap}, $m$ is large enough so that $p_i< (14/13)^9 < p_{i-(n-10)}$ and so $p_{i-(n-10)}$ is relatively prime with the labels on either side of $v_{p_i-1}$ for all $i\in\{14,15,\ldots,\}$. Let $q_1,q_2,\ldots,q_{n-1}$ be the $n-1$ largest primes in the sequence $2,3,4,\ldots,m+1$. 
Then we label the vertices in $V(P_n)=\{u_0,u_1,\ldots,u_{n-1}\}$ using the sequence $1,q_1,q_2,\ldots,q_{n-1}$. It is clear that this sequence is a coprime  labeling. 
Since we used all of the odd numbers less than our largest odd prime label when labeling $P_m$, it is also a minimum coprime labeling. 
\end{proof}

By applying the methods shown in the proofs of Theorem~\ref{pmp4} and \ref{pmpn<10} and with more precise values for $N$ and $\varepsilon$, the authors believe the following conjecture to be true. The difficulty in using the methods above would be coming up with a sequence of at most $N$ integers that would be coprime and use the integers in the sequence $2,3,\ldots,N$ save for the primes which would be shifted appropriately. 

\begin{conjecture}
Let $m>\varepsilon$ and $n\leq N$. Then $\pr(P_m+ P_n)\leq m+2n-2+\frac{1-(-1)^m}{2}$.
\end{conjecture}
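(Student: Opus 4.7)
The plan is to generalize the explicit constructions of Theorems~\ref{pm+p2}--\ref{pmpn<10} from $n\leq 10$ to all $n\leq N$, with Theorem~\ref{gap} supplying the asymptotic input that makes the construction work for each fixed $n$ once $m$ is sufficiently large. Write $B=m+2n-2+\tfrac{1-(-1)^m}{2}$ for the target bound, and let $q_1<q_2<\cdots<q_{n-1}$ denote the $n-1$ largest primes in $[2,m+1]$. The vertices of $V(P_n)=\{u_0,u_1,\ldots,u_{n-1}\}$ will be labeled by $1,q_1,\ldots,q_{n-1}$ in any order, which is automatically coprime along $P_n$ since $1$ is coprime with everything and the $q_i$ are distinct primes.

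For the join edges to be coprime, every label on $P_m$ must be coprime with each $q_i$, so no $P_m$ label may be a nontrivial multiple of any $q_i$. Since $m$ exceeds the threshold from Theorem~\ref{gap} and $n\leq N$, consecutive primes near $m$ are within a factor of $1+\varepsilon$, so iterating the gap bound $n-1$ times gives $q_1>(1+\varepsilon)^{-(n-1)}m>B/2$ for $\varepsilon$ sufficiently small relative to $1/n$. Hence $2q_i>B$ for every $i$, and the only multiples of $q_i$ in $[2,B]$ are the $q_i$ themselves. It therefore suffices to assign to $P_m$ any $m$ integers from $[2,B]\setminus\{q_1,\ldots,q_{n-1}\}$ arranged so that consecutive labels along the path are coprime; the remaining $n-2+\tfrac{1-(-1)^m}{2}$ integers in this set become the skipped labels.

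For the arrangement on $P_m$ I follow the template of Theorem~\ref{pmpn<10}: a backbone alternating even and odd entries, with (i) the missing primes $q_i$ in the natural ordering replaced by filler odd integers drawn from $\{m+2,\ldots,B\}$, specifically the smallest odd integers there that are not among the $q_j$, and (ii) each medium-sized prime $p$ remaining in $[2,m+1]$ shifted from its natural position to a nearby slot whose neighbors in the backbone lie in a bounded window around $p$ but are not divisible by $p$. The prime-gap bound $g(p_i)<\varepsilon p_i$ guarantees that such a bounded shift always exists once $m$ is large. Whenever two consecutive odd entries in the backbone would both be divisible by $3$, $5$, or $7$, a local swap between neighboring odd positions repairs the conflict, exactly as in the sequences appearing in Theorems~\ref{pm+p3} and \ref{pmp4}.

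The main obstacle is the initial segment of $P_m$, whose length grows with $n$: the small primes $3,5,7,\ldots$ appear close together, the filler labels $m+2,\ldots,m+O(n)$ must be interleaved carefully, and any lingering divisibility by $3$, $5$, or $7$ among the first few dozens of labels must be resolved by a hand-tuned sequence, exactly as in the opening of the sequences in Theorems~\ref{pm+p2}--\ref{pmpn<10}. For each fixed $n\leq N$ this reduces to a finite case analysis, but the number of cases grows with $n$ and depends on the residue of $m$ modulo small numbers such as $30$, which is precisely why the authors suggest delegating the uniform verification to a computer. Once the initial segment is verified, Theorem~\ref{gap} makes the remainder of the arrangement routine, yielding the desired coprime labeling of $P_m+P_n$ with maximum label at most $B$.
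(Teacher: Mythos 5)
This statement is stated in the paper as a \emph{conjecture}; the authors give no proof and explicitly identify the missing ingredient as ``coming up with a sequence of at most $N$ integers that would be coprime and use the integers in the sequence $2,3,\ldots,N$ save for the primes which would be shifted appropriately.'' Your proposal follows exactly the strategy the authors have in mind---extend the backbone constructions of Theorems~\ref{pm+p2}--\ref{pmpn<10} using the prime-gap bound of Theorem~\ref{gap}---but it does not close that gap. The decisive step, namely exhibiting and verifying the initial segment of the labeling (whose length and divisibility constraints grow with $n$, involving not just $3$, $5$, $7$ but all small primes up to roughly $2n$) and showing that the local swaps can always be performed consistently, is deferred to ``a finite case analysis\ldots delegated to a computer'' without being carried out. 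Asserting that a finite verification could in principle succeed is a restatement of the conjecture's plausibility, not a proof of it.

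There is also a concrete quantitative problem in the one step you do argue in detail. You need all of $q_1,\ldots,q_{n-1}$ (the $n-1$ largest primes in $[2,m+1]$) to exceed $B/2$ so that their only multiples in $[2,B]$ are themselves. Iterating $p_{i+1}<(1+\varepsilon)p_i$ gives $q_1>(m+1)/(1+\varepsilon)^{n-1}$, so $2q_1>B\approx m+2n$ requires $(1+\varepsilon)^{n-1}<2$ (up to lower-order terms). For the pair $(N,\varepsilon)=(118,1/13)$ this forces $n\leq 10$, which is precisely the range already covered by Theorem~\ref{pmpn<10}---no generalization is obtained. For $(2010760,1/16597)$ it allows $n$ only up to about $11500$, far short of $n\leq N$. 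So your claim that $q_1>B/2$ holds ``for $\varepsilon$ sufficiently small relative to $1/n$'' is an additional hypothesis not supplied by the conjecture's stated conditions, and without it even the easy half of your argument (that the join edges are coprime) breaks down for the larger values of $n$ the conjecture permits.
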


Consequently, a stronger conjecture may be posed based solely on the size of $m+n$.

\begin{conjecture}
Let $m$ and $n$ be positive integers such that $m\geq n$. Then $\pr(P_m+P_n)=m+2n-2+\frac{1-(-1)^m}{2}$.
\end{conjecture}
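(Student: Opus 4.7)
The plan is to extend the sequence-based construction from Theorem~\ref{pmpn<10} to all $n$, and to complement it with a matching lower bound that exploits the all-pairs adjacency across the join. Set $M=m+2n-2+\tfrac{1-(-1)^m}{2}$. For the upper bound I would label the $n$ vertices of $P_n$ with $1$ together with $n-1$ primes lying in the interval $(M/2,M]$. Each such prime $p$ has no other multiple in $\{1,\ldots,M\}$, so these labels are automatically coprime with one another and with every label eventually placed on $P_m$, and the join adjacencies are therefore satisfied. The remaining $m$ labels in $\{2,3,\ldots,M\}$ minus the reserved primes must then be arranged on $P_m$ so as to respect the path adjacencies.

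For the labeling of $P_m$ I would adapt the scheme of Theorem~\ref{pmpn<10}: a hand-chosen explicit header using the integers up to some $N(n)$, followed by a periodic block of one even and three odd labels in which each small prime $p_i$ (whose natural position would pair it with an even neighbour sharing the factor $p_i$) is relocated to the slot formerly occupied by $p_{i+n-1}$. Coprimality of the resulting tail adjacencies reduces to the ratio bound $p_{i+n-1}<2p_i$, which by iterating Theorem~\ref{gap} gives $p_{i+n-1}\le(1+\varepsilon)^{n-1}p_i$. For any fixed $n$, choosing $\varepsilon=1/16597$ makes the right-hand side strictly less than $2p_i$ once $i$ exceeds the threshold in Theorem~\ref{gap}, which delivers $\pr(P_m+P_n)\le M$ for $m$ sufficiently large relative to $n$.

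For the lower bound, every label $L\ne 1$ assigned to a vertex of $P_n$ has a prime factor $q$, and by the join edges no other label in the entire labeling can be a multiple of $q$; hence $L$ must be a prime power whose underlying prime appears in no other label. Within $\{1,\ldots,M\}$ the efficient choices are simply the primes in $(M/2,M]$, so $P_n$ requires at least $n-1$ such primes. Combining this with the bookkeeping on even versus odd labels available to $P_m$, namely the $\lfloor M/2\rfloor$ evens together with the remaining odd composites, forces $M\ge m+2n-2+\tfrac{1-(-1)^m}{2}$; the parity correction when $m$ is odd arises from the fact that in that case the largest label itself must be odd rather than even, pushing $M$ up by one.

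The principal obstacle is the explicit header of the $P_m$-sequence. For each value of $n$, more small primes must be relocated and their shift distances grow, so the finite prefix whose coprimality must be verified by hand grows accordingly, and the bookkeeping among three consecutive labels has to avoid sharing the small primes $3,5,7,\ldots$ simultaneously. A genuinely uniform closed-form header seems to require prime-gap inequalities sharper than those packaged in Theorem~\ref{gap}, together with a finite computational check to bridge the asymptotic regime and the moderate-$m$ cases. This is exactly the obstruction flagged in the discussion preceding the conjecture, and it is the reason the result is stated only conjecturally.
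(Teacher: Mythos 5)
The statement you are addressing is a conjecture: the paper gives no proof of it, only the partial results of Theorems~\ref{pm+p2}--\ref{pmpn<10} (which cover $n\le 10$ and large $m$) together with an explicit remark that the general case is open. Your proposal is essentially a restatement of that same strategy --- the label $1$ together with large primes on $P_n$, an explicit header followed by a periodic even/odd block with relocated primes on $P_m$, and Theorem~\ref{gap} to control the relocation distances --- and, as you concede, it does not overcome the obstruction the authors flag, so it is not a proof. One gap worth naming concretely on the upper-bound side: your construction requires $n-1$ primes in $(M/2,M]$ with $M=m+2n-2+\frac{1-(-1)^m}{2}$, but that interval contains only about $M/(2\ln M)$ primes, so when $m$ is comparable to $n$ (which the conjecture permits, since it assumes only $m\ge n$) there are not enough such primes and the labeling of $P_n$ cannot be carried out this way. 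The proposal therefore covers at best the regime $m\gg n$ already treated by the paper, and even there the header of the $P_m$-sequence is never actually constructed.

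Your lower-bound paragraph is also not a proof, and it gets the parity backwards. The vertices carrying even labels form an independent set, whose size in $P_m+P_n$ is at most $\lceil m/2\rceil$; hence at least $m+n-\lceil m/2\rceil$ odd labels are needed, which forces $\pr(P_m+P_n)\ge m+2n-1$ when $m$ is \emph{even} and $\ge m+2n-2$ when $m$ is odd. This is the opposite of the correction term $\frac{1-(-1)^m}{2}$, which adds $1$ when $m$ is odd, and your rationalization (``when $m$ is odd the largest label itself must be odd, pushing $M$ up by one'') is therefore wrong: odd $m$ allows \emph{more} even labels, not fewer. The paper's own computed values ($\pr(P_4+P_3)=9=m+2n-1$ versus $\pr(P_5+P_3)=9=m+2n-2$) confirm that the $+1$ belongs to even $m$, so the displayed formula in the conjecture appears to carry a sign error, and the statement as literally written is refuted for even $m$ by the independence-number bound. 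Any correct treatment would have to begin by replacing the correction term with $\frac{1+(-1)^m}{2}$ before attempting a construction.
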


We now move our discussion to the join of two cycles. We will use our work on the join of two paths to find the following results with relative ease. 
Notice that by Observation~\ref{primeFromPrime}, $C_m+C_n$, $C_m+P_n$ are not prime labelings when $m\geq n\geq 2$ or $n\geq m\geq 2$. 

\begin{theorem}\label{cm+cn}
Let $m$ and $n$ be positive integers such that $m\geq n$ and $n\leq 10$ and $m>118$ when $n\geq 5$. Then $\pr(C_m+C_n)=m+2n-1-(-1)^m$. 
\end{theorem}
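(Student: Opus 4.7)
The plan is to leverage Theorem~\ref{pmpn<10} together with Observation~\ref{primeFromPrime}. Since $P_m+P_n$ is a spanning subgraph of $C_m+C_n$, any coprime labeling of the latter restricts to one of the former, yielding the baseline bound
\[
\pr(C_m+C_n)\geq \pr(P_m+P_n)=m+2n-2+\frac{1-(-1)^m}{2},
\]
which already matches the target value $m+2n-2$ when $m$ is even. For $m$ odd I would tighten this via an independence-number argument: every vertex of $C_m$ is adjacent to every vertex of $C_n$ in the join, so all even labels must be placed in a single cycle. Since $\alpha(C_m)=\lfloor m/2\rfloor=(m-1)/2$ when $m$ is odd, at most $(m-1)/2$ even labels are available, forcing at least $\lceil m/2\rceil+n$ odd labels, and hence a largest label of at least $m+2n$.

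For the upper bound I would reuse the explicit labelings constructed in Theorem~\ref{pmpn<10}. The key observation is that the $P_n$-side labels in that construction are $1,q_1,\ldots,q_{n-1}$ for distinct primes $q_i\leq m+1$, so all these labels are pairwise coprime and the new wrap-around edge of $C_n$ is automatically satisfied. It then remains to inspect the wrap-around edge of $C_m$, connecting $v_1$ to $v_m$. In the labeling from Theorem~\ref{pmpn<10}, $v_1$ receives the label $2$, and when $m$ is even the label at $v_m$ lies among the $x_i$, which are odd integers larger than $m+1$ and hence coprime to $2$; in this case the $P_m+P_n$-labeling already serves as a minimum coprime labeling of $C_m+C_n$, realizing $m+2n-2$.

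When $m$ is odd the label of $v_m$ may either be even or may share a small prime factor with $v_1=2$ or $v_{m-1}$ once the cycle is closed, so the adjustment I plan is to replace the label at $v_m$ with the smallest unused odd integer, namely $m+2n$, and verify coprimality with the (odd) labels of $v_{m-1}$ and with $v_1=2$. Coprimality with the $C_n$-labels is automatic since those are primes at most $m+1$. The main obstacle is the bookkeeping required to cross-reference the intricate and case-dependent sequences from Theorems~\ref{pm+p2}, \ref{pm+p3}, \ref{pmp4}, and \ref{pmpn<10}: each range of $n\in\{2,\ldots,10\}$ and each parity of $m$ needs to be checked separately to confirm that either the existing construction extends directly to the cyclic closure or that a single-label substitution at the last position suffices. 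Small cases that fall outside the hypotheses of Theorem~\ref{pmpn<10} (e.g., $m$ close to $n$, or very small $m$ when $n\leq 4$) will be handled by ad hoc labelings in the style of the base cases recorded within those earlier theorems.
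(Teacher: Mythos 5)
Your proposal follows essentially the same route as the paper: reuse the $P_m+P_n$ labelings from Theorems~\ref{pm+p2}--\ref{pmpn<10}, note that the two cycle-closing edges are harmless when $m$ is even (label $2$ at $v_1$ against an odd label at $v_m$, and pairwise-coprime labels $1,q_1,\dots,q_{n-1}$ on the $C_n$ side), and for odd $m$ reassign the last label of $C_m$ to the next available odd number, with the independence number $(m-1)/2$ of $C_m$ forcing the lower bound $m+2n$ (your independence-number computation is in fact spelled out more carefully than the paper's). One minor slip: in the construction of Theorem~\ref{pmpn<10} the $x_i$ occupy the early positions of the sequence, so for even $m$ the label at $v_m$ is the odd tail value near $m+1$ rather than one of the $x_i$, but the conclusion you need --- that it is odd and hence coprime to $2$ --- still holds.
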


\begin{proof}
If $m$ is even then adding two edges to graphs with labelings formed in Theorems~\ref{pm+p2}, \ref{pm+p3}, \ref{pmp4}, and \ref{pmpn<10}, the label on $v_1$ (it is $2$ in each case) is relatively prime with any odd integer and $1$ is relatively prime with everything, so the result follows. 

Suppose that $m$ is odd. 
The labels on $C_n$ still satisfy the coprime property. If $m$ is not prime then we can reassign the labeling on $v_m$ to the next available odd number which will only increase the coprime labeling number by $2$ from $\pr(P_m+P_n)$, but this is necessary since the independence number of $C_m$ is $\frac{m-1}{2}$. If $m$ is prime, this reassignment will still form a coprime labeling since $(14/13)^9$ is smaller than $2$ by enough of a margin that $m+2n$ will be relatively prime with $m$. 
Since we used all of the odd numbers less than our largest odd prime label when labeling of $P_m$, it is also a minimum coprime labeling. 
\end{proof}

The following corollary is directly from combining Observation~\ref{primeFromPrime} with Theorem~\ref{cm+cn}.

\begin{corollary}
Let $m$ and $n$ be positive integers such that either $m\leq 10$ or $n\leq 10$. If $m\geq n$ then $\pr(C_m+P_n)=m+2n-1-(-1)^m$. If $n>m$ then $\pr(C_m+P_n)=n+2m-2+\frac{1-(-1)^m}{2}$.
\end{corollary}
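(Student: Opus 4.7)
The plan is to derive the result via a sandwich argument, placing $C_m+P_n$ between spanning sub- and supergraphs whose minimum coprime numbers are already established. Observe that $P_m+P_n$ arises from $C_m+P_n$ by deleting a single edge of $C_m$, while $C_m+P_n$ itself arises from $C_m+C_n$ by deleting a single edge of $C_n$. Since any coprime labeling of a graph restricts to a coprime labeling of any spanning subgraph (the coprime-number analogue of Observation~\ref{primeFromPrime}), this yields
\[
\pr(P_m+P_n)\;\leq\;\pr(C_m+P_n)\;\leq\;\pr(C_m+C_n).
\]

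For the case $m\geq n$, I would use $\pr(C_m+C_n)=m+2n-1-(-1)^m$ from Theorem~\ref{cm+cn} as the upper bound and the path-join value from Theorems~\ref{pm+p2}--\ref{pmpn<10} as the lower bound. When $m$ is even the two bounds coincide and the claim is immediate. When $m$ is odd, the path-join bound is short by one, so the gap must be closed using the independence-number argument already appearing in the proof of Theorem~\ref{cm+cn}: the independence number of $C_m$ equals $(m-1)/2$, strictly less than the value $(m+1)/2$ for $P_m$, and since $m\geq n$ any coprime labeling of $C_m+P_n$ is forced to place its even labels entirely on the $C_m$ side (even labels cannot straddle a join). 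Thus at most $(m-1)/2$ even labels are available, so at least $(m+1)/2+n$ odd labels are required, forcing the maximum label to be at least $m+2n$ and matching the upper bound.

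For the case $n>m$, I would apply the symmetric sandwich with the roles of the two sides of the join interchanged: now $P_n$ plays the role of the larger side, so Theorem~\ref{cm+cn} is invoked with $n$ as the larger index, and an analogous parity and independence-number analysis on that side produces the stated formula.

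The main obstacle will be the parity bookkeeping in the odd sub-case (and its swapped analogue for $n>m$), specifically verifying that the independence-number step from the proof of Theorem~\ref{cm+cn} transfers cleanly to $C_m+P_n$. This transfer works because the relevant step depends only on the replacement of $P_m$ by $C_m$ on the larger side of the join and not on the path-versus-cycle structure of the smaller side, which is precisely the reason the corollary can be described as following directly from Observation~\ref{primeFromPrime} together with Theorem~\ref{cm+cn}.
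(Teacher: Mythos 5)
Your overall strategy coincides with the paper's: the paper disposes of this corollary in one sentence by combining Observation~\ref{primeFromPrime} with Theorem~\ref{cm+cn}, which is precisely your sandwich $\pr(P_m+P_n)\leq\pr(C_m+P_n)\leq\pr(C_m+C_n)$. The even case of $m\geq n$ is fine, since there the two ends of the sandwich agree. The trouble is in the steps you add to close the gap in the odd cases. In the case $m\geq n$ with $m$ odd, your claim that a coprime labeling of $C_m+P_n$ is \emph{forced} to place its even labels on the $C_m$ side is false: even labels form an independent set, and in a join an independent set lies wholly in one side, but it may just as well be the path side, which accommodates up to $\lceil n/2\rceil$ vertices. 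Since $\lceil n/2\rceil>\lfloor m/2\rfloor=(m-1)/2$ exactly when $m=n$ is odd, your lower bound degrades to $m+2n-2$ there, and that value is in fact attainable: for $C_3+P_3$, label the triangle $3,5,7$ and the path $2,1,4$ to get a coprime labeling with maximum label $7$, whereas the stated formula gives $9$. So the gap in your argument is genuine, and it sits exactly where the corollary itself (and the paper's one-line justification) breaks down; the correct count is via $\alpha(C_m+P_n)=\max\left(\lfloor m/2\rfloor,\lceil n/2\rceil\right)$, which yields the claimed $m+2n$ only when $\lfloor m/2\rfloor\geq\lceil n/2\rceil$.

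The case $n>m$ is also not the clean mirror image you describe. The large side of the join is still a path, so its independence number does not drop when $P_m$ is replaced by $C_m$, and the upper end of your swapped sandwich, $\pr(C_n+C_m)$, exceeds the target value by $1$ whenever $n$ is odd; invoking Theorem~\ref{cm+cn} with $n$ as the larger index therefore cannot pin the answer down. What is actually needed is the direct observation that the optimal labeling of $P_n+P_m$ places $1$ and distinct primes on the $m$-vertex side, so these labels are pairwise coprime and the single extra edge closing $P_m$ into $C_m$ costs nothing; hence $\pr(C_m+P_n)=\pr(P_n+P_m)$ in this case. (This also reveals that the exponent $(-1)^m$ printed in the second formula should be $(-1)^n$: as written, the value can fall outside the very interval your sandwich correctly establishes.)
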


\section{Concluding Remarks}\label{remarks}

We conclude by posing several open questions regarding minimum coprime numbers.

\begin{question}
Can the minimum coprime number be determined for $P_n^k$ and $C_n^k$ for $k\geq 4$?
\end{question}

\begin{question}
Trees and grid graphs are conjectured to be prime, meaning their minimum coprime number would match their order.  Can the bound shown by Salmasian in {\normalfont\cite{S2}} that $\pr(T)\leq 4n$ for a tree $T$ of order $n$ be improved, and can a similar upper bound be found for the grid graph $P_m\times P_n$?
\end{question}

\begin{question}
Berliner et al.~{\normalfont\cite{Berliner}} investigated the minimum coprime number of $K_{n,n}$, but were not able to determine this number for all $n$.  Does there exist a formula for $\pr(K_{n,n})$ and can one determine the minimum coprime number more generally for all complete bipartite graphs $K_{m,n}$ in the cases in which there is no prime labeling?
\end{question}

\begin{question}
Many graphs that are not always prime are left to study in terms of minimum coprime labelings, such as M{\"o}bius ladders and $K_{1,n}+K_2$.  Can their minimum coprime numbers be determined?
\end{question}

\begin{question}
Is the following true: If $mn\leq p_{n-1}-n-1$ then $\pr(K_n\odot \overline{K}_m)=p_{n-1}$?
\end{question}

\begin{question}
Is the following true: If $mn>p_{n-1}-n-1$ then $\pr(K_n\odot \overline{K}_m)=mn+n+1$?
\end{question}

\bibliographystyle{amsplain}
\bibliography{vdec}

\end{document}